\newtheorem{theorem}{Theorem}[section]
\newtheorem{lemma}[theorem]{Lemma}
\newtheorem{proposition}[theorem]{Proposition}
\theoremstyle{definition}
\newtheorem{definition}[theorem]{Definition}
\newtheorem{example}[theorem]{Example}
\theoremstyle{remark}
\newtheorem{remark}[theorem]{Remark}
\numberwithin{equation}{section}
\DeclareMathOperator{\supp}{supp}
\begin{document}

\title{Quantitative FUP and spectral gap for quasi-Fuchsian group}

\author{Long Jin, An Zhang and Hong Zhang}

\address{Yau Mathematical Sciences Center, Jingzhai, Tsinghua University, Haidian District, Beijing, 100084, P.R.China}
\email{jinlong@mail.tsinghua.edu.cn}
\email{yuukizaizh@gmail.com}

\address{School of Mathematical Sciences, Beihang University, 37 Xueyuan road, Haidian district, Beijing, 100191, P.R.China}
\email{anzhang@buaa.edu.cn}

\date{\today}

\begin{abstract}
We derive an explicit formula for the exponent $\beta$  in the higher‑dimensional fractal uncertainty principle (FUP) established by Cohen \cite{MR4927737}, quantifying its dependence on the porosity parameter $\nu$ of the Fourier support. 
This quantitative version of FUP yields an explicit essential spectral gap for convex co‑compact hyperbolic 3‑manifolds arising from  quasi‑Fuchsian groups, thereby refining the result of Tao \cite{Tao2025PhD}. Our result extends the earlier work of Jin–Zhang \cite{MR4081109}  to higher dimensions. \end{abstract}

\maketitle

\section{Introduction}
In this paper, we investigate the {fractal uncertainty principle} (FUP) in higher dimensions in a quantitative form and apply it to the spectral gap problem for certain hyperbolic manifolds.
Roughly speaking, the FUP states that no function can be simultaneously localized in both position and frequency near a fractal set. This principle has a number of striking applications in quantum chaos.

\subsection{Main results}
As the first main result, we establish the following theorem, which provides a quantitative version  of Cohen's result \cite[{Theorem 1.1}]{MR4927737}.

\begin{theorem}[Quantitative FUP]\label{thm:qfup}
      Let $d\ge 1$, $\nu\in (0, 1/3),\;h\in (0,1/100)$. Suppose\\
(1) $\mathbf X\subset[-1,1]^d$ is $\nu$-porous on balls from scales $h$ to $1$.\\ 
(2) $\mathbf Y\subset [-h^{-1},h^{-1}]^d$ is $\nu$-porous on lines from scales $1$ to $h^{-1}$.\\
Then there exists some constant $C=C(\nu,d)>0$ independent of $h$ and $f$  such that
\begin{equation}\label{eq:fup}\| f\mathbf 1_\mathbf X\|_2\le C h^{\beta}\|f\|_2\end{equation} 
holds for any $f\in L^2(\mathbb R^d)$ with $\supp \hat f\subset \mathbf Y$, where
\begin{equation}\label{eq:beta}
    \beta=\beta(\nu,d)= \exp\left[-\exp\left((C_d\,\nu^{-2}|\log \nu|)^{(C_d\,\nu^{-1}|\log \nu|)}\right)\right]
\end{equation}
and $C_d>0$ is a large constant depending only on dimension $d$.  
\end{theorem}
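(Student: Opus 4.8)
\emph{Strategy of the proof.} The plan is to render quantitative the multiscale induction behind Cohen's theorem \cite[Theorem 1.1]{MR4927737}, following the template of the one-dimensional analysis of Jin--Zhang \cite{MR4081109}; the new content is entirely in tracking the dependence of every constant on $\nu$ and $d$. After the Fourier rescaling $\xi\mapsto h\xi$ we may assume $\widehat f$ is supported in a set $\widetilde{\mathbf Y}\subset[-1,1]^d$ that is $\nu$-porous on lines from scales $h$ to $1$, so both fractal sets now live at scales between $h$ and $1$, the standard FUP normalization. Fix a base integer $M=M(\nu,d)\ge 2$ and write $h=M^{-N}$ with $N\asymp \log(1/h)/\log M$; tile $[-1,1]^d$ by the $M$-adic cubes of generations $0,1,\dots,N$. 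For $f$ with $\widehat f\subset\widetilde{\mathbf Y}$ one estimates $\|f\mathbf 1_{\mathbf X}\|_2^2$ by grouping the mass according to the generation-$k$ cubes meeting $\mathbf X$; porosity of $\mathbf X$ supplies, inside each such cube, a sub-ball of radius $\gtrsim\nu M^{-k}$ on which $\mathbf X$ is absent. The goal is a submultiplicative, scale-uniform bound
\[
\|P_{k+1} f\,\mathbf 1_{\mathbf X}\|_2 \le (1-c)\,\|P_k f\,\mathbf 1_{\mathbf X}\|_2 + (\text{harmless remainder}),
\qquad c=c(\nu,d)>0,
\]
where $P_k$ denotes the Fourier-localized decomposition adapted to generation $k$; iterating over the $N$ generations and summing the resulting geometric series yields $\|f\mathbf 1_{\mathbf X}\|_2\le C(1-c)^{N}\|f\|_2\le C h^{\beta}\|f\|_2$ with $\beta\asymp -\log(1-c)/\log M$.

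The heart of the matter is the \emph{one-scale estimate}: after rescaling a generation-$k$ cube to the unit cube, if $\widehat g$ is supported in a set that is $\nu$-porous on lines at the scales comparable to the base step, then $g$ cannot carry more than a $(1-c)$ fraction of its $L^2$ mass on a $\nu$-porous subset of the unit cube. Were $\widetilde{\mathbf Y}$ porous on \emph{balls}, this would be elementary: the Fourier support misses a fixed ball, so $g$ is morally band-limited with a gap, and a Bessel/$TT^*$ computation gives the gain with $c$ polynomial in $\nu$. The genuine higher-dimensional difficulty, and the reason for the tower in \eqref{eq:beta}, is that $\widetilde{\mathbf Y}$ is only porous on lines: a single hole along one direction at one scale is far too weak to force spreading by itself. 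Here I would follow Cohen's dimension-reduction mechanism: pick a direction $e$ realizing the line-porosity, foliate $\mathbb R^d$ by lines parallel to $e$, and by Plancherel in the $e$-variable reduce to a family of one-dimensional FUP problems on the slices with the transversal variables as parameters, then reassemble over an $O(\nu^{-(d-1)})$-sized net of directions on $S^{d-1}$ and recurse on the dimension of the transversal fractal. This recursion is nested: the one-dimensional step is itself a multiscale iteration requiring $\asymp\nu^{-1}|\log\nu|$ intermediate scales to convert a single line-hole into a quantitative concentration bound, that count enters as an exponent in the dimension recursion whose depth is again controlled by a power of $\nu^{-1}|\log\nu|$, and the surviving per-scale gain $c(\nu,d)$ emerges as the reciprocal of a tower of that height; choosing $\log M$ to be a fixed power of $\nu^{-1}|\log\nu|$ (which affects $\beta$ only polynomially) produces precisely $\beta=\exp[-\exp((C_d\,\nu^{-2}|\log\nu|)^{(C_d\,\nu^{-1}|\log\nu|)})]$.

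With the one-scale estimate established, the assembly is bookkeeping. One selects $M=M(\nu,d)$ large enough that the one-scale argument applies on unit cubes, yet fixed independently of $h$; applies the one-scale bound along the $M$-adic tree from generation $0$ to generation $N$, using at generation $k$ the porosity of $\mathbf X$ on balls at scale $M^{-k}$ and of $\widetilde{\mathbf Y}$ on lines at the dual scale; and collects the geometric loss, the $O(\nu^{-(d-1)})$ directional overhead, the Schwartz tails from the Fourier localizations, and the rescaling Jacobians into the single constant $C(\nu,d)$. The hypotheses $\nu<1/3$ and $h<1/100$, together with the two separate scale ranges in the statement, are exactly what is needed for the porosity of both sets to persist under the repeated rescalings and for the base scale $M^{-1}$ to sit strictly inside both ranges.

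The step I expect to be the main obstacle is re-proving Cohen's intermediate lemmas---the directional covering lemma and the concentration/spreading lemma on slices---with explicit constants. Cohen's qualitative argument almost certainly uses a compactness or normal-families step at least once (to upgrade ``$g$ does not concentrate'' to ``$g$ concentrates by at most $1-c$''), and each such step must be replaced by a constructive estimate whose constant is propagated through the dimension recursion. Keeping that recursion from degenerating---that is, showing the per-scale gain $c(\nu,d)$ is bounded below by the reciprocal of a \emph{finite} tower of height $O(\nu^{-1}|\log\nu|)$ rather than an unbounded one---is the delicate point; once that is secured, the double-exponential form of $\beta$ and the inequality \eqref{eq:fup} follow by the geometric-series summation described above.
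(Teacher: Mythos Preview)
Your proposal has a genuine gap: you have misidentified the mechanism behind Cohen's theorem. The paper's proof does \emph{not} proceed by foliating along a line-porosity direction and reducing to one-dimensional FUP on slices. That strategy works for product sets (this is essentially the special case Han--Schlag handled) but fails here: ball-porosity of $\mathbf X$ does not restrict usefully to slices, and on the Fourier side the projection of a line-porous set onto a fixed axis need not be porous at all, so the ``family of one-dimensional FUP problems'' you envision does not have the required hypotheses. There is no dimension recursion in Cohen's argument.

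The actual route has two pieces, neither of which is a slicing:
\begin{itemize}
\item \textbf{Line-porosity $\Rightarrow$ damping function} (Theorem~\ref{thm:Porous-to-damping}). From the line-porosity of $\mathbf Y$ one builds a nonpositive weight $\omega$ supported near $\mathbf Y$, checks the growth condition $\int_0^\infty G^*(r)/(1+r^2)\,dr<\infty$ using that line-porous sets meet every line in small measure (Lemma~\ref{lem:measurelineintersection}), and then invokes Cohen's \emph{quantitative Beurling--Malliavin theorem} (Theorem~\ref{thm:QBM}) to produce $\psi\in L^2$ with $\operatorname{supp}\widehat\psi\subset B_{c_1}$, $\|\psi\|_{L^2(B_1)}\ge c_2$, decay $|\psi|\le\langle x\rangle^{-d}$ everywhere, and $|\psi(x)|\le\exp(-c_3|x|/(\log(2+|x|))^\alpha)$ on $\mathbf Y$. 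The explicit output is $c_3\asymp \nu^2/|\log\nu|$ and $1-\alpha\asymp\nu/|\log\nu|$.
\item \textbf{Damping function $+$ ball-porosity $\Rightarrow$ FUP} (Theorems~\ref{thm:hsl1dampingtofup}--\ref{thm:Cohen'sl2dampingtoFUP}). This \emph{is} a multiscale iteration on an $L$-adic tree over $\mathbf X$ with $L=\lceil\sqrt d/\nu\rceil$---so that part of your outline is in the right spirit---but the per-scale gain is obtained by convolving with the damping function $\psi$, not by any one-dimensional FUP.
\end{itemize}
The double exponential in \eqref{eq:beta} then comes from substituting $c_3,\alpha$ into Han--Schlag's explicit formula \eqref{eq:definition of R1 in Han-Schlag's result}: the dominant term is $R_1\sim\exp\bigl[(C_d/c_3)^{1/(1-\alpha)}\bigr]$, which gives the inner tower $(C_d\nu^{-2}|\log\nu|)^{C_d\nu^{-1}|\log\nu|}$, and then $C_*\sim\exp(R_1)$ and $\beta\asymp C_*^{-2}$ supply the two outer exponentials.

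Your guess that Cohen ``almost certainly uses a compactness or normal-families step'' is also off the mark: the QBM theorem is proved constructively via plurisubharmonic weight modification and H\"ormander's $\bar\partial$-estimates, which is precisely why all constants are trackable. The real work is the QBM theorem itself, which the paper imports from \cite{MR4927737} as a black box with explicit constants; the proof of Theorem~\ref{thm:qfup} is then the bookkeeping of Theorems~\ref{thm:Porous-to-damping} and~\ref{thm:hsl1dampingtofup}.
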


The theorem provides a quantitative statement that any $L^2$ function Fourier localized near a fractal set cannot simultaneously be localized near another fractal set in physical space. A more symmetric formulation of \eqref{eq:fup} is given by
\begin{equation}
\label{eq:fup-sym}
    \|\mathbf 1_{\mathbf X}\mathcal{F}_h\mathbf 1_{\mathbf Y}\|_{L^2(\mathbb R^d)\to L^2(\mathbb R^d)}\leq Ch^\beta
\end{equation}
where  $\mathcal{F}_h$ is the unitary semiclassical Fourier transform defined as 
\[
    \mathcal{F}_h f(\xi)={(2\pi h)^{-d/2}}\int_{\mathbb{R}^d} e^{- i\,x\cdot\xi/h}\,f(x)dx \qquad f\in L^2(\mathbb R^d)\,,
\]
$\mathbf Y$ and $\mathbf X\subset [-1,1]^d$ are $\nu$-porous on lines and balls respectively, from scales $h$ to $1$, and constants $C,\beta>0$ depend only on the dimension $d$ and porosity parameter $\nu$; specifically,  
$\beta=\beta(\nu,d)$ is given by \eqref{eq:beta}. This is the right form that will be used in Section \ref{sec:Spectral_gap}. 

Here, we consider fractal sets satisfying  more general porous conditions rather than the classical Ahlfors–David  regularity.
In particular, we impose the stronger \emph{porosity-on-lines} condition in frequency space instead of the \emph{porosity-on-balls} condition, primarily due to the well-known {counterexample} of \emph{lines in $\mathbb R^2$}, which is porous on balls but not on lines.
For details, see Definition \ref{def:porosity} and the subsequent discussions.  Finally, the boundedness restriction on 
 $\mathbf X$ and $\mathbf Y$ can be relaxed using an almost orthogonality argument, as demonstrated in Dyatlov--Jin--Nonnenmacher \cite {MR4374954}.

We focus on how the exponent $\beta$ depends on the porosity parameter $\nu$ while temporarily setting aside its explicit dependence on the dimension $d$---a dependence that is also believed to be tractable. The expression for  $\beta=\beta(\nu)$  given by \eqref{eq:beta}   in Theorem \ref{thm:qfup} is certainly not sharp and may be improved in various ways.

\begin{remark}
    This result is \emph{implicit} in the work of Cohen \cite{MR4927737}, relying on a general partial result of Han--Schlag \cite{MR4085124}.  Han and Schlag obtained quantitative estimates for product Ahlfors–David regular sets and for generic sets involving damping functions, without imposing specific Fourier-space geometry; see Theorem \ref{thm:hsl1dampingtofup}.
\end{remark}

It is well-known that the FUP can be used to study open quantum chaos, in particular the existence of 
essential spectral gaps for the Laplacian on non-compact (convex co-compact) hyperbolic manifolds. 
Using the quantitative FUP---Theorem \ref{thm:qfup}, we give an \emph{explicit} essential spectral gap for quasi-Fuchsian hyperbolic 3-manifold, as follows.
\begin{theorem}\label{thm:Spectral}
    Let $M=\Gamma \backslash \mathbb{H}^3$ be a convex co-compact 3-dimensional hyperbolic manifold
    such that $\Gamma$ is a quasi-Fuchsian group. That is, the associated limit set $\Lambda=\Lambda(\Gamma)\subset \mathbb{S}^2$ is a quasi-circle with Hausdorff dimension
    $\operatorname{dim}_H\Lambda\in (1,2)$.
    Then the resolvent 
    \[
        R(\lambda)=\left(-\Delta-1-\lambda^2\right)^{-1}
    \]
admits a meromorphic extension as a family of operators: $L^2_{\operatorname{comp}}(X)\to L^2_{\operatorname{loc}}(X)$, from 
    $\operatorname{Im}\lambda>0$ to $\operatorname{Im} \lambda\geq -\tilde\beta+\epsilon$ for any $\epsilon>0$, with finitely many poles. 
    Here
$\tilde\beta$ is given explicitly by  
    \[
        \tilde\beta=\frac12\,\beta(\nu,2), \quad \nu=\frac{1}{10^8(10^{11}C_\mu^2)^{1/(\delta-1)}C_{\operatorname{arc}}^2} 
    \]
where $\beta(\nu,2)$ is given by \eqref{eq:beta} with $d=2$, and the constants $C_\mu$ and  $C_{\mathrm{arc}}$ are given in \eqref{eq:delta_regularity_of_Lambda} and \eqref{eq:three_point_condition} respectively.

Moreover, $R(\lambda)$ satisfies the following
cutoff estimates: for any $\chi\in C_c^\infty(M)$ and $\epsilon >0$, there exists some constant $C_\epsilon$ depending on $\epsilon$,  and some constant  $\tilde C_{\chi,\epsilon}$ depending on $\epsilon$ and $\chi$,  such that
\[\|\chi R(\lambda) \chi\|_{L^2(M)\rightarrow L^2(M)}\le \tilde C_{\chi,\epsilon}\, |\lambda|^{-1-2\min(0,\operatorname{Im} \lambda)+\epsilon}\,,
 \quad 
\operatorname{Im} \lambda\in [-\tilde\beta+\epsilon,1], \ |\operatorname{Re} \lambda|\ge C_\epsilon\,.\]

\end{theorem}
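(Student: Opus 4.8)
The plan is to derive Theorem~\ref{thm:Spectral} from the quantitative FUP of Theorem~\ref{thm:qfup} (with $d=2$, in the symmetric form \eqref{eq:fup-sym}) through the standard reduction of the essential spectral gap problem for convex co-compact hyperbolic manifolds to a fractal uncertainty principle: the scheme of Dyatlov--Zahl, Bourgain--Dyatlov and Dyatlov--Jin--Nonnenmacher \cite{MR4374954} for surfaces, together with its three-dimensional version carried out in Tao \cite{Tao2025PhD}. Concretely, for $M=\Gamma\backslash\mathbb H^3$ one microlocalizes $R(\lambda)$ near the trapped set at the semiclassical scale $h=|\lambda|^{-1}$; writing the dynamics in boundary coordinates on $\mathbb S^2\simeq\mathbb R^2$, the surviving operator norm is dominated by $\|\mathbf 1_{\mathbf X}\mathcal F_h\mathbf 1_{\mathbf Y}\|_{L^2\to L^2}$, where $\mathbf X$ (in physical space) and $\mathbf Y$ (in frequency space) are realizations, up to bounded distortion, of the limit set $\Lambda\subset\mathbb S^2$, porous from scale $h$ to $1$. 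If this norm is $\le Ch^{\beta}$, the reduction outputs: (i) an a priori bound $\|\chi R(\lambda)\chi\|_{L^2\to L^2}\lesssim|\lambda|^{-1}$ for $\operatorname{Im}\lambda\ge-\beta/2+\epsilon$ and $|\operatorname{Re}\lambda|$ large, modulo a finite-rank correction --- the relation $\tilde\beta=\beta/2$ between spectral gap and FUP exponent being the one produced by the three-dimensional argument, where the FUP gain $|\lambda|^{-\beta}$ must absorb the propagation loss $|\lambda|^{-2\operatorname{Im}\lambda}$ accumulated over the Ehrenfest time $\sim\log|\lambda|$ in curvature $-1$; (ii) by the meromorphic Fredholm theorem, combined with the global meromorphic continuation of $R(\lambda)$ to $\mathbb C$ known for these manifolds (Mazzeo--Melrose, Guillarmou), only finitely many poles in the strip $\operatorname{Im}\lambda\ge-\beta/2+\epsilon$; and (iii) the cutoff bound $\|\chi R(\lambda)\chi\|\lesssim|\lambda|^{-1-2\min(0,\operatorname{Im}\lambda)+\epsilon}$ stated in the theorem. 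Thus everything reduces to verifying the hypotheses of Theorem~\ref{thm:qfup} for $\Lambda$ with the explicit porosity parameter $\nu$ in the statement.

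The geometric input is that the quasi-Fuchsian limit set $\Lambda$ is Ahlfors--David regular of dimension $\delta\in(1,2)$ with constant $C_\mu$ (equation~\eqref{eq:delta_regularity_of_Lambda}) and, being a quasicircle, obeys the Ahlfors three-point (arc) condition with constant $C_{\operatorname{arc}}$ (equation~\eqref{eq:three_point_condition}). I would establish that $\Lambda$ is $\nu_0$-porous on lines at all scales by the following mechanism: if $\Lambda$ were $\nu_0 r$-dense along some segment $I$ of length $r$ (in the relevant scale range), pick $\asymp\nu_0^{-1}$ points $y_k$ spaced $\asymp\nu_0 r$ apart along $I$ and, for each, a point $z_k\in\Lambda$ with $|z_k-y_k|\le\nu_0 r$; the sub-arc $\gamma_k$ of $\Lambda$ joining $z_k$ to $z_{k+1}$ has $\operatorname{diam}\gamma_k\le C_{\operatorname{arc}}|z_k-z_{k+1}|\lesssim C_{\operatorname{arc}}\nu_0 r$ by the three-point condition, hence $\mu(\gamma_k)\le C_\mu(C_{\operatorname{arc}}\nu_0 r)^\delta$ by upper regularity, so $\mu\big(\bigcup_k\gamma_k\big)\lesssim C_\mu C_{\operatorname{arc}}^\delta\,\nu_0^{\delta-1}r^\delta$; on the other hand $\bigcup_k\gamma_k$ is a connected sub-arc of $\Lambda$ with endpoints at distance $\gtrsim r$, so lower regularity for connected subsets of a quasicircle gives $\mu\big(\bigcup_k\gamma_k\big)\gtrsim c_\mu r^\delta$, and comparing forces $\nu_0^{\delta-1}\gtrsim c_\mu/(C_\mu C_{\operatorname{arc}}^\delta)$, a contradiction once $\nu_0$ is small enough. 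Porosity on balls for $\mathbf X$ then follows a fortiori. The remaining role of the quasicircle hypothesis is to bound the distortion of the finitely many coordinate charts on $\mathbb S^2$ in which the FUP is applied, so that $\mathbf X,\mathbf Y$ remain porous with parameter shrunk by at most $\asymp C_{\operatorname{arc}}^{-2}$; carrying all universal constants through produces exactly
\[
\nu=\frac{1}{10^8\,(10^{11}C_\mu^2)^{1/(\delta-1)}\,C_{\operatorname{arc}}^2}.
\]
With $\mathbf X$ $\nu$-porous on balls and $\mathbf Y$ $\nu$-porous on lines from $h=|\lambda|^{-1}$ to $1$, Theorem~\ref{thm:qfup} gives $\|\mathbf 1_{\mathbf X}\mathcal F_h\mathbf 1_{\mathbf Y}\|_{L^2\to L^2}\le Ch^{\beta(\nu,2)}$, and inserting $\beta=\beta(\nu,2)$ into (i)--(iii) yields the meromorphic extension to $\operatorname{Im}\lambda\ge-\tilde\beta+\epsilon$ with $\tilde\beta=\tfrac12\beta(\nu,2)$, the finiteness of poles, and the cutoff estimate.

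I expect the main obstacle to be precisely the passage from the intrinsic geometry of $\Lambda\subset\mathbb S^2$ to the flat, uniform, scale-$h$-to-$1$ porosity hypotheses of Theorem~\ref{thm:qfup}. Three points require care. First, one must prove the line-porosity of $\Lambda$ with the correct explicit dependence on $C_\mu,C_{\operatorname{arc}}$: Ahlfors--David regularity of dimension $>1$ alone does \emph{not} suffice (a product of a Cantor set with an interval is a counterexample), so the curve structure and the arc condition must genuinely be used, and the lower mass bound for connected sub-arcs has to be made quantitative. Second, one must verify that the operator delivered by the three-dimensional resolvent reduction really is of the form $\mathbf 1_{\mathbf X}\mathcal F_h\mathbf 1_{\mathbf Y}$ with $\mathbf Y$ porous \emph{on lines}, not merely on balls: the stronger line condition in frequency is indispensable here because of the flat-line phenomenon discussed after Definition~\ref{def:porosity}, which has no analogue on the real line and is the main new difficulty in dimension two. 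Third, the boundedness of $\mathbf X,\mathbf Y$ must be removed by the almost-orthogonality argument of \cite{MR4374954}. By contrast, steps (i)--(iii) of the reduction are a routine transcription of the surface case once the FUP is used as a black box, so the genuinely new work lies in the explicit porosity lemma and the bookkeeping of constants.
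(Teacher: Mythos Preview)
Your overall plan---reduce to a hyperbolic FUP via \cite{dyatlov2016spectral}, establish line porosity of $\Lambda$, feed this into Theorem~\ref{thm:qfup}---matches the paper's. But the two substantive steps are handled differently, and in each case your version has a real gap.

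\emph{Line porosity.} Your measure-counting argument is not the paper's. The paper (Proposition~\ref{prop:Xis-line-porous}) uses $\delta$-regularity with $\delta>1$ only to prove a \emph{non-concentration lemma} (Lemma~\ref{lem:non-concentration}): near any $x\in X$ and any line $L$ there is a point of $\pi(\Lambda)$ at distance $\ge r/C_{\mathrm{out}}$ from $L$, with $C_{\mathrm{out}}\asymp(C_\mu^2)^{1/(\delta-1)}$. Assuming failure of porosity on a segment $I$, one then picks five nearby points of $X$ along $I$, uses a tiny Ramsey-type lemma (Lemma~\ref{lem:ramsey}) to extract three in consistent cyclic order on $\Lambda$, and combines these with a far-from-$L$ reference point and the three-point condition to exhibit an explicit ball disjoint from $X$. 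This yields the $C_{\mathrm{arc}}^{-2}$ dependence in the stated $\nu$. Your argument instead needs the lower bound $\mu(A)\gtrsim r^\delta$ for a connected sub-arc $A$ of diameter $\gtrsim r$; this is \emph{not} the $\delta$-regularity lower bound \eqref{eq:Euclidean-delta-regularity}, which is for balls in $\Lambda$, and a small ball centered on $A$ can meet the complementary arc. One can prove such an arc-lower-bound, but only by invoking the three-point condition again, which injects further powers of $C_{\mathrm{arc}}$; carried out honestly your contradiction gives $\nu\lesssim(C_\mu^{-2})^{1/(\delta-1)}C_{\mathrm{arc}}^{-(2\delta-1)/(\delta-1)}$, which blows up badly as $\delta\downarrow1$ and does not produce the constant in the theorem.

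\emph{The FIO step and the factor $1/2$.} The operator in the hyperbolic FUP is not $\mathcal F_h$ but the FIO $B_\chi$ of \eqref{eq:B_b} with logarithmic phase $\Phi$. The Hessian $\partial^2_{xx'}\Phi$ blows up on the diagonal, so one cannot bound its norm and apply the standard FIO-to-Fourier reduction (Proposition~\ref{prop:FUP for general FIO,Kim's version}). The paper's key observation is that the \emph{ratio} of its singular values is identically $1$, and a rescaling argument (Proposition~\ref{prop:FUP_FIO_our_version}) then reduces to the flat FUP at the cost of replacing $\beta$ by $\beta/2$. This FIO reduction---not any Ehrenfest-time propagation loss, as you suggest---is the origin of $\tilde\beta=\tfrac12\beta(\nu,2)$; once the hyperbolic FUP holds with exponent $\tfrac12\beta(\nu,2)$, \cite[Theorem~3]{dyatlov2016spectral} converts it directly into a spectral gap of the same size. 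You correctly flag this FIO passage as a concern, but the bounded-singular-value-ratio idea that resolves it is missing from your sketch.
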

In the statement, $\nu$ is the porosity-on-lines parameter, which is determined by the quasi‑Fuchsian group $\Gamma$; the quantities $C_\mu$ and $C_{\mathrm{arc}}$  encode respectively the $\delta$-regularity condition and  the \emph{three-point condition} satisfied by the limit set $\Lambda$.
\begin{remark}
    The celebrated rigidity theorem for quasi‑Fuchsian groups due to Bowen
    \cite{Bowen1979Hausdorff}
 implies that the limit set $\Lambda$ 
    of a quasi-Fuchsian group is either a genuine circle or a quasi‑circle with  $\operatorname{dim}_H \Lambda\in (1,2)$. 
Since the spectral gap for Fuchsian groups (whose limit set is a  circle) is trivial as all resonances can be computed explicitly, see subsection \ref{subsec:fuch-Appendix-B}, Theorem \ref{thm:Spectral} in fact holds for \emph{all} quasi‑Fuchsian groups.  Note that Theorem \ref{thm:qfup} does not work for circles in higher dimensions.
\end{remark}

\begin{remark}
  The existence of an essential spectral gap for quasi‑Fuchsian $3$‑manifolds has already been established by Zhongkai Tao in his PhD thesis~ \cite[Corollary 4.7.4]{Tao2025PhD}; see also \cite{MR4930594} for the case $\delta<1$. However, his argument is quite general and does \emph{not} yield an explicit gap in terms of the geometric data of the group $\Gamma$.
\end{remark}

\subsection{A rough review of the literature}
The recent milestone work of Bourgain and Dyatlov \cite{bourgain2018spectral}  established an FUP for sets in $\mathbb R$ with a porosity condition, opening a wide field for studying various forms of FUP and their applications to problems in quantum chaos. The proof in \cite{bourgain2018spectral}, which deals with Ahlfors–David regular sets but implies a version for porous sets of Dyatlov--Jin \cite{MR3849286} due to a rough equivalence between the two concepts, relies heavily on the Beurling–Malliavin (BM) theorem.  
In contrast, Dyatlov and Jin \cite{MR3803716} proved an FUP with exponent $\beta > \frac12 - \delta$ using Dolgopyat’s method, offering a different approach. 

The one-dimensional FUP of Bourgain-Dyatlov has been successfully applied to study quantum systems whose underlying classical dynamics are chaotic.  By applying the FUP to fractal sets arising from chaotic dynamical systems, one can control high-frequency waves on such systems. Applications of the FUP currently fall into two main categories:
(1) obtaining lower bounds on the $L^2$-mass of eigenfunctions \cite{MR3849286,MR4374954}, establishing control for the Schr\"odinger equation, and proving exponential decay of damped waves \cite{MR4374954,MR3934848,MR4061399}, on compact negatively curved surfaces;
(2) proving  spectral gaps, Strichartz estimates and exponential decay of waves on non-compact hyperbolic surfaces \cite{dyatlov2016spectral,MR4073195,MR4081109,MR3894924,huang2025lossless}  and spectral gaps for obstacle scattering in dimension 2 \cite{MR4736525}.  See also \cite{MR3993411} for a very nice survey on this topic.

A natural subsequent question is whether analogous results hold in higher dimensions. The problem becomes more complicated, as even the basic tools for an FUP are not readily available.  In particular, no higher-dimensional analogue of the BM theorem was known prior to \cite{MR4927737}. Han and Schlag \cite{MR4085124} proved an FUP where the physical set is porous  and the Fourier support is a special Cartesian product of one-dimensional porous sets.  Their proof uses the Cartan technique to obtain $L^2$ localization and constructs damping functions from the product structure and the one-dimensional BM theorem. Cladek and Tao \cite{CladekTao} proved an FUP  for $\delta$-regular sets with $\delta$ near $d/2$ in odd dimensions, using additive energy estimate, extending the work of Dyatlov--Zahl \cite{dyatlov2016spectral}.

Recently, Backus, Leng, and Tao \cite{MR4930594}  proved a higher-dimensional FUP, extending \cite{MR3803716} via Dolgopyat's method for $\delta$-regular sets, under the conditions that $\delta<d/2$ and the two fractal sets are not orthogonal in a certain sense. They also investigated the spectral gap for the Laplacian on convex co-compact hyperbolic manifolds; see also \cite{Tao2025PhD}.  Almost simultaneously, Cohen \cite{MR4927737} established a remarkable FUP for sets exhibiting porosity along lines, using a quantitative higher-dimensional BM theorem.  

The proof of this BM theorem is divided into two parts: a plurisubharmonic (PSH) BM theorem and an analytic (A) BM theorem.  The A-BM theorem is proved similarly to the one-dimensional case using Bourgain’s idea based on Hörmander's $L^2$ theory, while the PSH-BM theorem is more difficult. The main novelty lies in modifying the weight using dyadic techniques so that the integral mean of its Hessian over lines admits a finite lower bound. This \emph{mean-on-lines} condition is new and central to the higher-dimensional BM problem. 
See also \cite{MR4884567} for another BM theorem in higher dimensions for weights which can be majorized by radial functions. However, this result does \emph{not} yield explicit constants because it relies on the non‑quantitative one‑dimensional BM theorem. It also seems not applicable to the higher‑dimensional FUP, since the weight we require \emph{cannot} be majorized by radial functions.
 
Combining Cohen's BM theorem with the damping criteria of Han–Schlag yields the $d$-dimensional FUP. Since every step in \cite{MR4927737} is constructive, one can track all constants to obtain a quantitative version.

The FUP of Cohen has recently  been employed by Kim and Miller \cite{kim2025semiclassical} to investigate the supports of semiclassical measures for Laplace eigenfunctions on compact hyperbolic $(d+1)$-manifold, extending the argument of Dyatlov and Jin \cite{MR3849286} to higher dimensions. 
Moreover, Athreya, Dyatlov and Miller \cite{MR4956615}  previously demonstrated that, on a $2d$-dimensional compact complex hyperbolic quotient manifold, the support of semiclassical measures must contain the cosphere bundle of a compact immersed totally geodesic complex submanifold. Their argument relies solely on the one-dimensional FUP of \cite{bourgain2018spectral} along the fast expanding/contracting directions. It is conjectured even in higher dimensions that semiclassical measures should still have \emph{full} support, although this currently seems far from reach.

The original proof of the one-dimensional BM theorem in \cite{bourgain2018spectral} proceeds by contradiction, so the constants are not explicit. Jin and Zhang \cite{MR4081109} used a different, weaker version of the one-dimensional BM theorem, assuming a Lipschitz condition on the Hilbert transform, to prove a quantitative FUP. This follows ideas from the interesting ``seventh proof" given by Mashreghi, Mazarov and Havin \cite{MR2241422}. 
 \cite{MR4081109} also addressed the spectral gap problem, applying their quantitative FUP to obtain an explicit essential spectral gap for convex co-compact hyperbolic surfaces when the Hausdorff dimension of the limit set is close to 1. Our main results, Theorem \ref{thm:qfup} and \ref{thm:Spectral}, can be viewed as higher-dimensional analogues of \cite[Theorem 1.2 and 1.3]{MR4081109}.

 Last but not least, discrete versions of the FUP have also been established. For instance, one‑dimensional cases were treated in \cite{MR3656519, MR4779150, han2024fractal}, while \cite{Cohen} studied a two‑dimensional discrete setting.
In \cite{MR3656519}, an FUP for arithmetic Cantor sets of dimension $\delta<1$ was used to obtain a substantial improvement of the spectral gap in a quantum open baker’s map model.
The works \cite{MR4779150, han2024fractal} investigated further FUP for random discrete Cantor sets with improved exponents.
For two‑dimensional discrete Cantor sets, \cite{Cohen} provided an equivalent condition for the FUP to hold, namely that the sets contain no pair of orthogonal lines.

\subsection{Structure of the paper} Section \ref{sec:qfup} is devoted to the proof of Theorem \ref{thm:qfup}, which provides a detailed revisiting of Cohen's proof \cite{MR4927737} with precise tracking of all constants. Section \ref{sec:Spectral_gap} contains the proof of Theorem \ref{thm:Spectral}, establishing an explicit essential spectral gap for quasi-Fuchsian hyperbolic 3-manifolds.
The proof proceeds by first establishing {line porosity for the limit set}. This property relies on the fact that the limit set is a Jordan curve satisfying the {three-point condition} defined in \eqref{eq:three_point_condition}, which prevents it from wandering arbitrarily far and then returning arbitrarily close to its starting point. We then follow the argument developed in \cite{dyatlov2016spectral,bourgain2018spectral,kim2025semiclassical} to deduce the essential spectral gap. Finally, we also give the explicit computations of FUP for circles and spectral gap for Fuchsian groups. 
Appendix \ref{sec:appA} collects several useful lemmas concerning porosity.

\subsection*{Notations} 
Throughout the paper, $C$ and $c$ denote large and small positive constants, respectively, whose values may change from lines to lines.  Both $C(\cdot)$ and $C_{(\cdot)}$ indicate the dependence of $C$ on the parameters in $(\cdot)$. We denote by $B_r(x)$ or $B(x,r)$ the ball in $\mathbb{R}^d$ with radius $r>0$ and center $x$. The Japanese bracket is denoted by $\langle x \rangle = (1+|x|^2)^{1/2}$, while $|x|$ and $|x|_1$ denote the Euclidean $\ell_2$ and $\ell_1$ norms in $\mathbb{R}^d$, respectively. The indicator function for a set $X$ is written as 
$\mathbf 1_{X}$. For any set $X\subset \mathbb R^d$ and $r\ge0$, denote the $r$-neighborhood of $X$ by $X(r)=X+B_r(0)$.

\section{Quantitative FUP in higher dimensions}\label{sec:qfup}
This section presents Cohen's fractal uncertainty principle (FUP) with explicit constants. In particular, we specify the exponent $\beta=\beta(\nu)>0$ of the scale parameter $h<1/100$ for porosity parameter $0<\nu<1/3$. We consider functions whose frequency support is localized on a fractal set that is $\nu$-porous on lines at scale $h^{-1}$.

We first introduce several notions of porosity.
\begin{definition}[Porosity]
\label{def:porosity}
(1) A set $\mathbf X\subset \mathbb R^d$ is called $\nu$-\emph{porous on balls} from scales $\alpha_0$ to $\alpha_1$ if for every ball $B$ of diameter $\alpha_0<R<\alpha_1$, there exists a point $x\in B$ such that $B_{\nu R}(x) \cap \mathbf X=\emptyset$. \\
(2) A set $\mathbf X$ is called (stronger) $\nu$-\emph{porous on lines} from scales $\alpha_0$ to $\alpha_1$ if for every line segement $\tau$ of length $\alpha_0<R<\alpha_1$, there exists a point $x\in \tau$ such that $B_{\nu R}(x)\cap \mathbf X=\emptyset$. For $d=1$, the two definitions coincide.
\\
(3) A set $\mathbf X\subset [-1,1]^d$ is called (weaker) \emph{box porous} at scale $L\ge 3$ with depth $n$, where $L$ is an integer, if the following holds: let $\mathcal C_n$ be the collection of congruent cubes of side length $L^{-n}$
 obtained by partitioning $[-1,1]^d$. Then, for every cube $Q\in \mathcal C_n$ satisfying $Q\cap \mathbf X\neq \emptyset$, there exists a sub-cube $Q'\in\mathcal C_{n+1}$ such that $Q'\subset Q$ and $Q'\cap \mathbf X=\emptyset$.     
\end{definition}
\begin{example} The product of two one-dimensional $1/3$-Cantor sets is porous on lines, whereas the Sierpinski carpet is  porous only on balls. In $\mathbb R^1$, any Ahlfors–David $\delta$-regular set with $\delta<1$ is porous in any way above, but this fails in general in higher dimensions.
\end{example}
\begin{example}
A line in $\mathbb R^d$ ($d\ge 2$)  is porous on balls but not on lines. The porosity-on-lines condition for the Fourier support is \emph{essential} for the FUP; a counterexample in $\mathbb R^2$  is given by $\mathbf X=\{(t,0): t\in \mathbb R\}, \mathbf Y=\{(0,t): t\in \mathbb R\}$ equipped with standard Lebesgue measures, which satisfy $\widehat{\mu_{\mathbf X}}=\mu_{\mathbf Y}$.
\end{example}
\begin{example}\label{ex:circle}
A circle in  $\mathbb R^d$ ($d\ge 2$) is porous on lines for scale $\alpha_0>0$ with porosity paramter $\nu=\nu(\alpha_0)\rightarrow 0$ as $\alpha_0\rightarrow 0$. Thus, porosity-on-lines does not hold uniformly for $\alpha_0\rightarrow 0$, which indicates that FUP for the circle cannot follow from Cohen's theorem. This can also be seen from the fact that Proposition \ref{prop:Xis-line-porous} does not make sense,  since the constant $C_{\operatorname{out}}$ in Lemma \ref{lem:non-concentration} blows up when the limit set is a circle, whose Hausdorff dimension is $\delta=1$.  We will discuss more details in subsection \ref{subsec:fuch-Appendix-B}.
\end{example}

\subsection{From damping condition to FUP}
We first introduce a damping property of fractal sets, which is crucial for establishing higher-dimensional FUP.  This  was first observed by Han--Schlag \cite{MR4085124}.
\begin{definition}[Damping functions]
(1)  A set $\mathbf{Y}\subset \mathbb{R}^d$ is said to admit a Cohen's $l_2$ {damping function} with parameters $c_1,c_2,c_3,\alpha\in (0,1)$, if there exists a function $\psi\in L^2(\mathbb{R}^d)$ satisfying
    \begin{alignat}{2}
        \operatorname{supp} \hat{\psi}&\subset  B(0,c_1)\,,\nonumber\\
        \|\psi\|_{L^2(B(0,1))}&\geq  c_2\,, \nonumber\\
             |\psi(x)|&\leq \langle x\rangle^{-d}\,, \quad &\text{for all } x\in \mathbb{R}^d\,, \label{eq:damping(-d)decay}\\
             |\psi(x)|&\leq \exp\left(-c_3\frac{|x|}{\log(2+|x|)^\alpha}\right)\,, \quad &\text{for all } x\in\mathbf{Y}\,. \label{eq:dampingY}
    \end{alignat}  
(2) A set $\mathbf Y \subset \mathbb R^d$ is said to admit a Han--Schlag's $l_1$ {damping function} with parameters $c_1, c_2, c_3, \alpha\in (0,1)$, if there exists a function $\psi\in L^2(\mathbb R^d)$ satisfying
 \begin{alignat*}{2}
\supp \hat{\psi}&\subset[-c_1,c_1]^d\,,\\
\|\psi\|_{L^2([-1,1]^d)}&\ge c_2\,,\\
|\psi(x)|&\le \langle x\rangle^{-d}\,, \quad &\text{for all } x\in \mathbb{R}^d\,,\\
|\psi(
x
)|&\le \exp\left({-c_3\frac{|x|_1}{(\log (2+|x|_1))^\alpha}}\right)\,, \quad &\text{for all } x\in\mathbf Y\,, 
   \end{alignat*}  
where $|x|_1$ is the $l_1$ norm of $x$ in $\mathbb R^d$. 
\end{definition}

Han--Schlag \cite[Theorem 5.1]{MR4085124} established the following criterion for the  FUP in terms of box porosity and $l_1$ damping. 
\begin{theorem}[\cite{MR4085124} From $l_1$ damping to FUP]
\label{thm:hsl1dampingtofup}
Suppose\\
(1) $\mathbf X\subset[-1,1]^d$ is box porous at scale $L\ge 3$ with depth $n$, for all $n\ge 0$ with $L^{n+1}\le N$ and $N\gg 1$.\\
(2) $\mathbf Y\subset[-N,N]^d$ is such that for all $n\ge 0$ with $L^{n+1} \le N$ one has that for all $\eta\in[-NL^{-n}-3,NL^{-n}+3]^d$ the set 
\[L^{-n} \mathbf Y+[-4,4]^d+\eta\]
admits an $l_1$ damping function with parameters $c_1=(2L)^{-1}$ and $c_2, c_3, \alpha\in(0,1)$.\\
Assume that $0<c_3<c_3^*(d)\ll 1$. 
Then the FUP inequality 
\[\|f\mathbf 1_\mathbf X\|_2\le C N^{-\beta}\|f\|_2\]
holds for any $f\in L^2(\mathbb R^d)$ with $\supp \hat f\subset \mathbf Y$ and 
$N\ge N_0\gg 1$, where $\beta$ is given by 
\begin{equation}
\label{eq:hs-beta}
\beta
=-\frac{\log{(1-\gamma_0(T)/2)}}{T\log L} \qquad \text{with} \quad \ 
\gamma_0(T)=\frac{1-c_\varphi^2/ L^{2(T-1)}}{2C_*^2}\,, 
\end{equation}
\begin{equation}\label{eq:definition of gamma_0T and T_0 in Han-Schlag's result}
    \quad T\ge T_0=\left\lceil\frac{\log(2C_\varphi C_*^2+\sqrt{4C_\varphi^2C_*^4+c_\varphi^2})}{\log L}\right\rceil\,,
\end{equation}
\begin{equation}\label{eq:definition of C* in Han-Schlag's result}
    C_*=\exp{\left[c_3(R_1+2)(\log(1+R_1))^{-\alpha}/2\right]} \,,
\end{equation}
\begin{equation}
\begin{split}
\label{eq:definition of R1 in Han-Schlag's result}
   R_1=\max\Bigg\{\left(\frac{2d}{c_3}\right)^2,\exp{\left[\left(\frac{16\pi C_d}{c_3}\right)^{\frac{1}{1-\alpha}}\right]},\exp{\left[4^{\frac{1}{1-\alpha}}\right]}, \qquad \\
   \left[\frac{(d |\log c_1|)^d}{c_3}\right]^8, \left[4\log\left(\frac{2C_d}{c_2^2}\right)\right]^2,(8d)^4\Bigg\}\,.
\end{split}
\end{equation}
Here $C_\varphi, c_{\varphi}$ 
depend only on the Schwartz function $\varphi$ chosen in \cite{MR4085124}, which depends actually only on dimension $d$ if $\varphi$ is radial.
\end{theorem}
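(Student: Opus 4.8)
The plan is to run the base-$L$ multiscale iteration of Han--Schlag \cite{MR4085124}, carrying every constant explicitly through the argument; the statement is essentially \cite[Theorem 5.1]{MR4085124}, so the real work is to re-read its proof with enough care that each of $T$, $C_*$, $R_1$, $\gamma_0(T)$ can be pinned down. The backbone is a \emph{submultiplicative} estimate over scales: going $T$ base-$L$ scales at a time, the $L^2$-mass of $f$ on the union of those cubes of $\mathcal C_{n+T}$ that still meet $\mathbf X$ is shown to be at most a factor $1-\gamma_0(T)/2<1$ times its mass on the analogous union in $\mathcal C_n$. Since $\mathbf X\subset[-1,1]^d$ this carving is available at every scale $n$ with $L^{n+1}\le N$; iterating over the roughly $\lfloor\log N/\log L\rfloor/T$ available super-steps and unwinding the geometric product gives $\|f\mathbf 1_\mathbf X\|_2\le C\,(1-\gamma_0(T)/2)^{\lfloor\log N/\log L\rfloor/T}\|f\|_2\le C N^{-\beta}\|f\|_2$ with $\beta=-\log(1-\gamma_0(T)/2)/(T\log L)$, which is exactly \eqref{eq:hs-beta}.

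First I would fix a cube $Q\in\mathcal C_n$ meeting $\mathbf X$ and rescale it to a unit cube; the Fourier support of the rescaled $f$ then lies in $L^{-n}\mathbf Y\subset[-NL^{-n},NL^{-n}]^d$, so $f$ is band-limited at a scale that is ``coarse'' relative to the descendants of $Q$ in $\mathcal C_{n+1},\dots,\mathcal C_{n+T}$. Box porosity, applied recursively $T$ levels deep, produces inside $Q$ (and inside each of its surviving descendants) an empty sub-cube; a Bernstein-type non-concentration estimate for functions whose Fourier transform is supported in a box of the relevant size --- this is where the fixed Schwartz function $\varphi$ and its constants $C_\varphi,c_\varphi$ enter, and where the factor $1-c_\varphi^2/L^{2(T-1)}$ in the numerator of $\gamma_0(T)$ comes from, since $L^{-(T-1)}$ is the side of the smallest cube removed over the $T$ levels and a function band-limited at this scale cannot evacuate all of its mass past such a cube --- then shows a definite fraction of the mass in $Q$ is lost.

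The subtlety, and the whole reason the hypothesis is phrased through damping functions rather than bare indicators, is that $\mathbf 1_Q f$ is not band-limited, so the one-step non-concentration estimate does not iterate. Instead, at each scale $m$ with $n\le m<n+T$ and each relevant translate $\eta$, the hypothesis supplies an $l_1$ damping function for $L^{-m}\mathbf Y+[-4,4]^d+\eta$; rescaled, this is a smooth $\Psi_m$ with Fourier support in a box of side $\sim c_1L^m=L^{m-1}/2$, with $|\Psi_m|\le\langle\cdot\rangle^{-d}$, with $\|\Psi_m\|_{L^2}\gtrsim c_2$ on a unit window, and --- via \eqref{eq:dampingY} --- exponentially small far out along the rescaled copy of $\mathbf Y$. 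One peels off the coarse part of $f$ at scale $m$, estimates it by the porosity non-concentration lemma, and damps the fine remainder by multiplying with $\Psi_m$; the choice $c_1=(2L)^{-1}$ is precisely what makes the accumulated Fourier enlargement from the $T$ factors $\Psi_n,\dots,\Psi_{n+T-1}$ stay below the frequency scale needed to restart the iteration at $\mathcal C_{n+T}$. Converting the exponential-type bound \eqref{eq:dampingY} into a clean multiplicative loss forces a split of $\mathbb R^d$ at the radius $R_1$ of \eqref{eq:definition of R1 in Han-Schlag's result}, beyond which the exponential term beats the borderline tail $\langle x\rangle^{-d}$ and, simultaneously, the Poisson-summation tails are summable, the lower bound $c_2$ survives, and the $\sim(d|\log c_1|)^d$ relevant frequency cells are absorbed; on the complementary ball one uses only the crude bound $C_*$ of \eqref{eq:definition of C* in Han-Schlag's result}. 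Finally, $T\ge T_0$ as in \eqref{eq:definition of gamma_0T and T_0 in Han-Schlag's result} unwinds algebraically to $L^{2T}\ge 4C_\varphi C_*^2L^{T}+c_\varphi^2$, which is the threshold keeping $\gamma_0(T)$ positive and the per-super-step factor $1-\gamma_0(T)/2$ strictly in $(0,1)$; collecting the geometric product over the super-steps then produces the stated $\beta$ together with an explicit $C=C(d,L,c_2,c_3,\alpha)$.

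I expect the main obstacle to be making the iteration genuinely \emph{close up}: one must verify that after multiplying by $T$ consecutive damping functions the output is still band-limited enough, in the precise quantitative sense demanded by the porosity non-concentration lemma, that the same estimate re-applies at scale $n+T$, all while the per-step contraction $1-\gamma_0(T)/2$ survives the non-summable $\langle x\rangle^{-d}$ decay and the accumulation of the $O(NL^{-n})$ relevant translates $\eta$. Equivalently, the hard part is checking that the dimensional data $C_d$, $c_3^*(d)$, $C_\varphi$, $c_\varphi$ can be taken uniform across all scales $n$, so that the loss per super-step does not degrade and the resulting $\beta$ is independent of $N$.
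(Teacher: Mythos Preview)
The paper does not prove this theorem at all: it is stated as a quantitative restatement of \cite[Theorem~5.1]{MR4085124}, with the constants $\beta$, $\gamma_0(T)$, $T_0$, $C_*$, $R_1$ extracted from Han--Schlag's argument and recorded in \eqref{eq:hs-beta}--\eqref{eq:definition of R1 in Han-Schlag's result}; no independent argument is given. Your proposal therefore goes well beyond what the paper does, and in the right direction: the multiscale base-$L$ iteration, the role of box porosity in supplying empty sub-cubes, the use of the damping functions $\Psi_m$ to localize without destroying band-limitedness, the choice $c_1=(2L)^{-1}$ to control the accumulated Fourier enlargement over $T$ steps, and the final geometric product yielding $\beta=-\log(1-\gamma_0(T)/2)/(T\log L)$ are all features of the Han--Schlag proof you would indeed have to track. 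Since the paper simply takes this theorem as input, there is nothing to compare your sketch against here; if you want to verify the explicit formulas \eqref{eq:definition of C* in Han-Schlag's result}--\eqref{eq:definition of R1 in Han-Schlag's result} you must go line by line through \cite{MR4085124} rather than the present paper.
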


Taking $L=\lceil \sqrt d/\nu\rceil$ and $N=\lceil h^{-1}\rceil$, and noting that  $\nu$-porosity implies box-porosity (Lemma \ref{lem:compare_porosity}), we obtain another criterion for the quantitative FUP in terms of $\nu$-porosity and $l_2$ damping, which is implicit in Cohen \cite[Theorem 1.6]{MR4927737}, .
\begin{theorem}[\cite{MR4927737} From $l_2$ damping to FUP]\label{thm:Cohen'sl2dampingtoFUP}
Suppose\\
(1) $\mathbf X\subset[-1,1]^d$ is $\nu$-porous on balls from scales $h$ to $1$.\\
(2) $\mathbf Y\subset[-h^{-1},h^{-1}]^d$ satisfies: There exists $c_2,c_3,\alpha\in(0,1)$ such that for all $h<s<1$ and $\eta\in[-h^{-1}s-5,h^{-1}s+5]^d$, the set 
\[s\mathbf Y+[-4,4]^d+\eta\]
admits an $l^2$ damping function with parameters $c_1=\frac{\nu}{20\sqrt d}$ and $c_2,c_3,\alpha$.\\
Then for $\beta>0$ and $C>0$ independent of $h$ as in Theorem \ref{thm:hsl1dampingtofup} with $L=\lceil \sqrt d/\nu\rceil$, \[\|f\mathbf 1_\mathbf X\|_2\le C h^{\beta}\|f\|_2\]
 holds for any $f\in L^2(\mathbb R^d)$ with $supp \hat f\subset \mathbf Y$.  
\end{theorem}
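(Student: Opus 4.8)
The plan is to obtain this as a mechanical consequence of the Han--Schlag criterion, Theorem \ref{thm:hsl1dampingtofup}: I would fix $L=\lceil\sqrt d/\nu\rceil$ and $N=\lceil h^{-1}\rceil$ (so $3\le L\le 2\sqrt d/\nu$, using $0<\nu<1/3\le\sqrt d$) and then verify the two hypotheses of that theorem, translating \emph{porosity on balls} into \emph{box porosity} and \emph{Cohen's $l^2$ damping} into \emph{Han--Schlag's $l^1$ damping}. When $h$ lies above a threshold depending only on $\nu$ and $d$, the asserted bound $\|f\mathbf 1_{\mathbf X}\|_2\le Ch^{\beta}\|f\|_2$ is trivial (estimate $\|f\mathbf 1_{\mathbf X}\|_2\le\|f\|_2$ and enlarge $C=C(\nu,d)$), so I may assume $h$ small, so that $N\ge N_0$ is large as Theorem \ref{thm:hsl1dampingtofup} requires --- this also forces $L\le N$, so the relevant range of depths is nonempty.

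For hypothesis (1), I would cite Lemma \ref{lem:compare_porosity}: a set that is $\nu$-porous on balls from scales $h$ to $1$ is box porous at scale $L=\lceil\sqrt d/\nu\rceil$ with depth $n$ for every $n\ge0$ with $L^{n+1}\le N$. Indeed, inside a cube of $\mathcal C_n$ one applies the porosity property to a suitable concentric interior ball and extracts an empty subcube of $\mathcal C_{n+1}$; the cube-versus-ball geometric loss is exactly what the $\sqrt d$ in $L$ is there to absorb, while $L^{n+1}\le N$ keeps all the radii involved inside the porosity window $[h,1]$.

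For hypothesis (2), I would first record that any $l^2$ damping function is an $l^1$ damping function with slightly worse constants: from $|x|\le|x|_1\le\sqrt d\,|x|$ and the monotonicity of $\log$ one gets $\tfrac{|x|_1}{(\log(2+|x|_1))^{\alpha}}\le\tfrac{\sqrt d\,|x|}{(\log(2+|x|))^{\alpha}}$, so the Euclidean decay bound \eqref{eq:dampingY} with constant $c_3$ yields the $l^1$ decay bound with constant $c_3/\sqrt d$, while the support condition and the $L^2$ lower bound carry over verbatim because $B(0,c_1)\subset[-c_1,c_1]^d$ and $B(0,1)\subset[-1,1]^d$; replacing $c_3$ by $\min(c_3,\sqrt d\,c_3^*(d))$ only strengthens the hypothesis and secures the requirement $c_3/\sqrt d<c_3^*(d)$. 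Then, for each $n\ge0$ with $L^{n+1}\le N$, I would put $s=L^{-n}$, which lies in $(h,1)$ when $n\ge1$ (the lower bound because $L^n\le N/L<h^{-1}$) and equals $1$ when $n=0$, and note $[-NL^{-n}-3,NL^{-n}+3]^d\subset[-h^{-1}s-5,h^{-1}s+5]^d$ since $NL^{-n}\le h^{-1}s+1$. Hence the hypothesis of the present theorem supplies, for the set $s\mathbf Y+[-4,4]^d+\eta$, an $l^2$ damping function with $c_1=\nu/(20\sqrt d)$; this is an $l^1$ damping function, and since $\nu/(20\sqrt d)\le(2L)^{-1}$ and shrinking the Fourier support can only help, it is in particular an $l^1$ damping function with the parameter $c_1=(2L)^{-1}$ demanded in hypothesis (2). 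The single borderline case $n=0$, i.e. $s=1$, I would reach by the harmless rescaling $\psi\mapsto\psi(s\,\cdot)$ with $s\nearrow1$, at the cost of a fixed shrinkage of $c_2$ and $c_3$. With both hypotheses verified, Theorem \ref{thm:hsl1dampingtofup} gives $\|f\mathbf 1_{\mathbf X}\|_2\le CN^{-\beta}\|f\|_2\le Ch^{\beta}\|f\|_2$ for all $f$ with $\supp\hat f\subset\mathbf Y$, with $\beta$ as in \eqref{eq:hs-beta}.

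None of the individual steps is difficult; the hard part is purely bookkeeping --- arranging that, simultaneously for every dyadic scale $s=L^{-n}$ in the window and every admissible translate $\eta$, one has the porosity window $[h,1]$ for the box porosity of $\mathbf X$, the inclusion $[-NL^{-n}-3,NL^{-n}+3]^d\subset[-h^{-1}s-5,h^{-1}s+5]^d$ of translate ranges, and the Fourier-support bound $\nu/(20\sqrt d)\le(2L)^{-1}$. It is the simultaneous balancing of these three constraints that dictates the choice $L=\lceil\sqrt d/\nu\rceil$.
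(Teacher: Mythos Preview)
Your proposal is correct and follows exactly the approach the paper indicates: the paper's ``proof'' is the single sentence preceding the theorem statement (take $L=\lceil\sqrt d/\nu\rceil$, $N=\lceil h^{-1}\rceil$, and use Lemma~\ref{lem:compare_porosity}), together with the remark after the theorem that the $l_1$/$l_2$ distinction is inessential. Your write-up simply spells out the bookkeeping that the paper leaves implicit --- the $l_2\Rightarrow l_1$ damping conversion via $|x|\le|x|_1\le\sqrt d\,|x|$, the inclusion of translate ranges, the Fourier-support inequality $\nu/(20\sqrt d)\le(2L)^{-1}$, and the borderline $n=0$ case --- and all of these checks are correct.
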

Note that the $l_1$ damping  with $l_1$ Euclidean norm $|\cdot|_1$ is \emph{only} used in this subsection.  In the remainder of the paper, damping refers exclusively to the standard $l_2$ damping.

\subsection{From porosity to damping condition}
With the damping criteria in Theorem \ref{thm:Cohen'sl2dampingtoFUP}, it suffices to prove quantitatively that every $\nu$-porosity-on-lines set must admit a nice damping function. To prove this, we need the following quantitative BM theorem due to Cohen \cite[Theorem 1.4]{MR4927737}
\begin{theorem}[\cite{MR4927737} QBM] \label{thm:QBM}
    Let \(\omega:\mathbb{R}^{d} \to \mathbb{R}_{\leq 0}\) be a weight satisfying
    \begin{align}
        \omega(x) &= 0 \qquad \text{for } |x| \leq 2, \nonumber\\
        |D^{a}\omega(x)| &\leq C_{\mathrm{reg}} \langle x \rangle^{1-a} \qquad \text{for } 0 \leq a \leq 3, \label{cond:reg} \\
        \int_{0}^{\infty} \frac{G^{*}(r)}{1+r^{2}} \, dr &\leq C_{\mathrm{gr}}, \label{cond:gr}
    \end{align}
where 
\[G(x)=\int_{1/2}^2 |\omega(sx)| ds,\quad G^*(r)=\sup_{|x|=r} G(x).\]
Then for any \(\sigma > 0\), there exists a function \(f \in L^{2}(\mathbb{R}^{d})\) such that
    \begin{align}
        \operatorname{supp} \hat{f} &\subset B_{\sigma}, \nonumber \\
        |f(x)| &\geq \frac{1}{2} \qquad \text{for all } x \in B_{r_{\min}}, \nonumber \\
        |f(x)| &\leq C e^{c\sigma \omega(x)} \qquad \text{for all } x \in \mathbb{R}^{d}\,,\label{eq:BMcontrol-f-by-omega}\\
\end{align}
where
\begin{align}
c &= \frac{1}{C_d\max(C_{\mathrm{reg}}, C_{\mathrm{gr}})} \nonumber\\
    C &= C_d \, \max(\sigma^{-C_d}, e^{3\sigma}) \nonumber \\
 r_{\min} &= \frac{1}{C_d} \, \min(\sigma, \sigma^{-1})   \hskip50mm   \nonumber
\end{align}
and $C_d$ is a large constant  depending only on the dimension $d$.
\end{theorem}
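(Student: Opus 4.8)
Every step in Cohen's argument \cite{MR4927737} is constructive, so the plan is to recast his proof of this statement with all constants made explicit; we sketch the architecture. Following the Beurling--Malliavin tradition, the proof splits into a \emph{plurisubharmonic (PSH) step}, which manufactures a good weight out of $\omega$, and an \emph{analytic step} (Bourgain's method via H\"ormander's $L^2$ theory), which produces $f$ by solving a $\bar\partial$-equation against that weight. The PSH step will build a plurisubharmonic function $\phi:\mathbb{C}^d\to\mathbb{R}$ with four features: (i) $\phi(x)\le c\,\sigma\,\omega(x)+O(1)$ for $x\in\mathbb{R}^d$, so that $e^{-\phi/2}$ dominates $e^{-c\sigma\omega/2}$ up to a constant; (ii) a linear bound $\phi(z)\le C_d\,\sigma\,|\operatorname{Im}z|+O_\sigma(\log\langle z\rangle)$ throughout $\mathbb{C}^d$, forcing the Fourier support of $f$ into $B_\sigma$; (iii) $\phi\ge -O(1)$ on a ball $B_{r_{\min}}$, $r_{\min}=\tfrac1{C_d}\min(\sigma,\sigma^{-1})$, yielding $|f|\ge\tfrac12$ there; and (iv) bounded oscillation at unit scale, a consequence of the regularity bound \eqref{cond:reg}, needed to pass from $L^2$ to pointwise bounds. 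Granting $\phi$, the analytic step is the one-variable argument run in $\mathbb{C}^d$: choose $\chi\in C_c^\infty$ equal to $1$ near $0$ with $\bar\partial\chi$ supported where $\phi$ is comparatively large, solve $\bar\partial u=\bar\partial\chi$ with an $L^2(e^{-\phi})$ estimate, and put $f=\chi-u$. Holomorphy of $f$, the weighted $L^2$ control and feature (iv) give the pointwise bound \eqref{eq:BMcontrol-f-by-omega}; feature (ii) with Paley--Wiener gives $\operatorname{supp}\hat f\subset B_\sigma$; and smallness of $\|u\|$ near $0$ (arranged by keeping $\bar\partial\chi$ away from the origin, together with a rescaling that trades $\sigma$ against the length scale) gives $|f|\ge\tfrac12$ on $B_{r_{\min}}$.

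The heart of the matter is the PSH step. Since $\omega$ satisfies only the mild ray-averaged growth bound \eqref{cond:gr}, and no convexity or subharmonicity, one cannot take a naive Poisson extension as in one variable. Instead the plan is to decompose dyadically, $\omega=\sum_{k\ge0}\omega_k$ with $\omega_k$ supported in $\{|x|\sim 2^k\}$ by a smooth radial partition of unity; by \eqref{cond:reg} each $\omega_k$ varies slowly on its scale, so on scale $2^k$ it is ``almost affine'', with Hessian of operator norm $O(C_{\mathrm{reg}}2^{-k})$. On each annulus one builds a plurisubharmonic local model $\phi_k$---essentially the one-dimensional Beurling--Malliavin multiplier construction performed in each complex line and glued over directions---whose real restriction dominates $c\sigma\omega_k$ with imaginary growth linear at rate $\sim\sigma$. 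The new ingredient, and the reason the hypothesis is stated through $G$ and $G^*$, is that after summing one cannot keep the complex Hessian pointwise nonnegative; instead, by a further dyadic regularization of the weight, one arranges that for every complex line $\ell$ and every interval $I\subset\ell$ of the relevant length the average $\frac1{|I|}\int_I\Delta_\ell(\phi|_\ell)$ has a fixed lower bound---the \emph{mean-on-lines} condition. Summing the $\phi_k$ and controlling the tails is where $\int_0^\infty G^*(r)(1+r^2)^{-1}\,dr\le C_{\mathrm{gr}}$ is used, which is why $c$ ends up proportional to $1/\max(C_{\mathrm{reg}},C_{\mathrm{gr}})$.

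The mean-on-lines bound suffices for the analytic step because the estimate actually needed is not the full H\"ormander inequality with a positive Levi form, but a Donnelly--Fefferman-type refinement: slicing $\mathbb{C}^d$ into complex lines and using the one-dimensional theory with an auxiliary bounded weight along each line, an averaged lower bound on the Laplacian along lines is enough to solve $\bar\partial$ against $e^{-\phi}$ at acceptable cost. Carrying explicit constants through the partition of unity, the per-scale multiplier construction, the gluing, the mean-on-lines regularization, and the $\bar\partial$-estimate then produces the stated $c$, the prefactor $C=C_d\max(\sigma^{-C_d},e^{3\sigma})$---the $\sigma^{-C_d}$ coming from the polynomial loss in the sub-mean-value step at small $\sigma$ and the $e^{3\sigma}$ from the linear-growth budget of $\phi$---and $r_{\min}=\tfrac1{C_d}\min(\sigma,\sigma^{-1})$.

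The hard part will be exactly the loss of pointwise plurisubharmonicity: engineering the dyadic modification of $\omega$ so that the glued weight is mean-plurisubharmonic along every complex line in every direction, while \emph{simultaneously} keeping (a) its real restriction below $c\sigma\omega$ up to constants, (b) its imaginary growth linear at rate $O_d(\sigma)$, and (c) its unit-scale oscillation bounded---all with explicit constants---is the delicate balancing act in which \eqref{cond:reg}--\eqref{cond:gr} are used in full. A secondary nuisance is the small-$\sigma$ regime, where the $\sigma$-dependence of $r_{\min}$ and of $C$ must be extracted from the $\bar\partial$-estimate rather than buried in an unnamed constant.
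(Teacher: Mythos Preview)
The paper does not prove Theorem~\ref{thm:QBM}; it is stated there as a quotation of \cite[Theorem~1.4]{MR4927737} and used as a black box in the proof of Theorem~\ref{thm:Porous-to-damping}. The only commentary the paper offers is in the introduction: Cohen's proof splits into a plurisubharmonic BM theorem and an analytic BM theorem, the latter via Bourgain's H\"ormander-$L^2$ method and the former requiring a dyadic modification of the weight so that the \emph{mean-on-lines} condition holds. Your sketch matches this architecture point for point---PSH step plus analytic step, dyadic decomposition of $\omega$, the mean-on-lines substitute for pointwise plurisubharmonicity, and the Donnelly--Fefferman-type slicing argument in place of the full H\"ormander inequality---so there is nothing to compare against beyond that high-level description.

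One caution: your outline is a faithful map of Cohen's strategy, but it is still only a map. The actual work in \cite{MR4927737} lies in the quantitative PSH lemma (constructing $\phi$ with all four features simultaneously and with explicit constants), and your proposal acknowledges this as ``the hard part'' without indicating how the dyadic regularization is carried out so that the mean-on-lines lower bound survives the gluing in \emph{every} direction. That step is the genuine novelty; everything else is routine once it is in hand. If you intend to include a self-contained proof rather than cite Cohen, that is where the effort must go.
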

This is a deep theorem about the existence of the pluri-subharmonic functions and analytic continuation. There is surely still room for the constants here  to be improved in various ways, but it is enough for us to get an explicit damping property and finally an explicit exponent for FUP using Theorem \ref{thm:Cohen'sl2dampingtoFUP}. We first track the constants from porosity to damping in \cite[Proposition 1.7]{MR4927737}.
\begin{theorem}[Porosity to damping]\label{thm:Porous-to-damping}
    Let $\mathbf{Y} \subset[-3h^{-1},3h^{-1}]^d$ be $\nu$-porous on lines from scales $\mu>1$ to $h^{-1}$ with $\nu<1/3$ and $h<1/100$. Then there exists a constant $C_d$ depending only on $d$, and some constant
\[\alpha>\alpha(\nu)=1-\frac{\nu}{C_d\log|\nu|}<1\]
 such that for any $0<c_1<1$, $\mathbf Y$ admits an $l_2$ damping function with parameter $\alpha, c_1$ and 
    \[ \ c_2=\frac{c_1^{C_{d}}}{C_d\,\exp{\big( \frac{c_1 \nu\mu}{C_d|\log\nu|}\big)}}
    \quad c_3=\frac{c_1\nu}{C_d\,|\log \nu|}.\]
\end{theorem}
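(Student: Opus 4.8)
The plan is to construct the damping function $\psi$ by applying the quantitative Beurling--Malliavin theorem (Theorem 1.8) to a carefully chosen weight $\omega$ built from the porosity structure of $\mathbf Y$, and then to verify that the resulting $f$ satisfies all four requirements in the definition of an $l_2$ damping function, tracking every constant. First I would fix an auxiliary weight: for each line direction, the $\nu$-porosity on lines from scales $\mu$ to $h^{-1}$ produces, along every segment of length between $\mu$ and $h^{-1}$, a gap of relative size $\nu$ missing $\mathbf Y$. The standard construction (following Cohen's Proposition 1.7, itself adapting the one-dimensional picture of Bourgain--Dyatlov) is to set $\omega(x)$ to be (a smoothed, regularized version of) a sum over dyadic scales of bump contributions supported on these porosity gaps, normalized so that $\omega\le 0$, $\omega(x)=0$ near the origin, and $\omega$ is large (negative) on $\mathbf Y$ at spatial scale $|x|$, with the crucial feature that the decay rate of $\omega$ on $\mathbf Y$ is controlled: $|\omega(x)|\gtrsim |x|/(\log|x|)^{\alpha}$ for $x\in\mathbf Y$, with $\alpha=\alpha(\nu)=1-\nu/(C_d\log|\nu|)$. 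The exponent $\alpha$ here is dictated by how the number of nested porosity scales available up to radius $|x|$ grows; the $\log|\nu|$ and $\nu^{-1}$ factors enter because each scale only reduces mass by a factor depending on $\nu$, and one needs $\sim \nu^{-1}\log|\nu|$-type many scales to accumulate a fixed amount.

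Next I would check the three hypotheses of Theorem 1.8 for this $\omega$. The normalization $\omega(x)=0$ for $|x|\le 2$ is built in by starting the dyadic sum above scale $\mu>1$ (after rescaling). The regularity bound $|D^a\omega(x)|\le C_{\mathrm{reg}}\langle x\rangle^{1-a}$ for $0\le a\le 3$ follows from choosing the bumps at scale $|x|$ with derivatives of size $\langle x\rangle^{-a}$ times the height, which is $O(\langle x\rangle)$; so $C_{\mathrm{reg}}$ is an absolute dimensional constant. The growth condition $\int_0^\infty G^*(r)(1+r^2)^{-1}\,dr\le C_{\mathrm{gr}}$ is where the porosity-on-lines (not merely on balls) is essential: $G(x)=\int_{1/2}^2|\omega(sx)|\,ds$ averages $|\omega|$ over the segment from $x/2$ to $2x$, and the line-porosity guarantees that this segmental average is still $\lesssim |x|/(\log|x|)^{\alpha}$ — i.e., the weight cannot be uniformly large along a whole line — so $G^*(r)\lesssim r/(\log r)^{\alpha}$ and the integral $\int^\infty r^{-1}(\log r)^{-\alpha}\,dr$ converges precisely because $\alpha<1$, giving $C_{\mathrm{gr}}=C_{\mathrm{gr}}(\nu)$ of size $\sim(1-\alpha)^{-1}\sim C_d|\log\nu|/\nu$. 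This is the step I expect to be the main obstacle: one must verify the mean-on-lines bound for $G$ uniformly in direction, keeping the dependence on $\nu$ sharp, and this is exactly the new ingredient that distinguishes the higher-dimensional argument from $d=1$.

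Finally I would run Theorem 1.8 with $\sigma=c_1$ (matched to the Fourier-support parameter): it produces $f$ with $\operatorname{supp}\hat f\subset B_{c_1}$, $|f|\ge\tfrac12$ on $B_{r_{\min}}$ with $r_{\min}\gtrsim c_1/C_d$, and $|f(x)|\le Ce^{c\,c_1\,\omega(x)}$ pointwise, where $c=(C_d\max(C_{\mathrm{reg}},C_{\mathrm{gr}}))^{-1}\sim c_1^0\cdot\nu/(C_d|\log\nu|)$ after inserting $C_{\mathrm{gr}}\sim C_d|\log\nu|/\nu$, and $C=C_d\max(c_1^{-C_d},e^{3c_1})=C_d c_1^{-C_d}$ since $c_1<1$. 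To convert this into a genuine $l_2$ damping function I would: (i) replace $f$ by $f/C$ and possibly multiply by a fixed Schwartz cutoff to enforce the clean pointwise bound $|\psi(x)|\le\langle x\rangle^{-d}$ — here the $(-d)$-decay (not yet present in $f$) is obtained by noting $e^{c\sigma\omega(x)}$ decays like $e^{-c'|x|/(\log|x|)^\alpha}$ off a bounded set and multiplying by an extra fixed factor absorbing the normalization, at the cost of shrinking $c_2$; (ii) read off $\|\psi\|_{L^2([-1,1]^d)}\gtrsim r_{\min}^{d/2}/C\gtrsim c_1^{C_d}/C_d$ from the lower bound on $B_{r_{\min}}$, but note the extra $\exp(-c_1\nu\mu/(C_d|\log\nu|))$ loss comes from the rescaling: the weight actually vanishes only up to scale $\mu$ after unscaling, so $\omega$ already contributes a factor $e^{c\sigma\omega}$ of size $\exp(-c_1\nu\mu/(C_d|\log\nu|))$ at the relevant scale, depressing $c_2$; (iii) the decay on $\mathbf Y$ reads $|\psi(x)|\le\exp(c\,c_1\,\omega(x))\le\exp(-c_3|x|/(\log(2+|x|))^\alpha)$ with $c_3=c\,c_1\cdot(\text{the lower constant in }|\omega|\ge\cdots)\sim c_1\nu/(C_d|\log\nu|)$. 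Collecting these gives exactly the stated $\alpha(\nu)$, $c_2$, and $c_3$. Throughout, the only non-routine work is the $\nu$-sharp verification of (1.12)–(1.14) for the constructed weight; the passage from Theorem 1.8's output to the damping definition is bookkeeping.
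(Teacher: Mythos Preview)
Your overall architecture matches the paper's: build a weight $\omega$ dyadically, check the regularity and growth hypotheses of the QBM theorem, apply it with $\sigma=c_1$, then multiply by a Schwartz cutoff to get the $\langle x\rangle^{-d}$ decay. But the key step---verifying the growth condition (1.13)---is wrong as written, and the error is not cosmetic.

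First, a slip in the construction: you say the bumps are ``supported on these porosity gaps,'' but that is backwards. The weight $\omega_k$ is supported on a fattening $\mathbf Y_k$ of $\mathbf Y\cap A_k$ (cubes of side $W_k=2^k/k^s$ meeting $\mathbf Y$), with height $2^k/k^\alpha$. If $\omega$ were supported on the gaps it would vanish on $\mathbf Y$ and could not give the damping bound there.

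Second, and more seriously, your convergence argument for $\int_0^\infty G^*(r)(1+r^2)^{-1}\,dr$ is incorrect. You claim $G^*(r)\lesssim r/(\log r)^\alpha$ and that $\int^\infty r^{-1}(\log r)^{-\alpha}\,dr$ converges ``precisely because $\alpha<1$.'' In fact that integral \emph{diverges} for $\alpha<1$ (substitute $u=\log r$). The pointwise bound $|\omega(x)|\lesssim |x|/(\log|x|)^\alpha$ alone gives only $G^*(r)\lesssim r/(\log r)^\alpha$, which is useless. The line porosity enters differently: since $\mathbf Y_k$ is $\nu/2$-porous on lines from scale $\sim W_k/\nu$ to $h^{-1}$, the one-dimensional measure bound (Lemma A.7 with $d=1$) gives $|\mathbf Y_k\cap\ell|\lesssim 2^k k^{-s\gamma}$ with $\gamma=\nu/(C_d|\log\nu|)$. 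Multiplying by the height $2^k/k^\alpha$ and normalizing yields $G^*(r)\lesssim r/(\log r)^{\alpha+s\gamma}$. One then chooses $s=0.2$ and $\alpha\ge 1-0.1\gamma$ so that $\alpha+s\gamma>1$, and now the integral converges with $C_{\mathrm{gr}}\sim C_d/\gamma\sim C_d|\log\nu|/\nu$. So your final value of $C_{\mathrm{gr}}$ happens to be right, but the mechanism---an \emph{extra} logarithmic gain $k^{-s\gamma}$ from the line-measure estimate pushing the exponent above $1$---is missing from your argument.

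A smaller point: the $\langle x\rangle^{-d}$ decay cannot come from $e^{c\sigma\omega(x)}$, which equals $1$ off $\mathbf Y_k$; the multiplication by a fixed Schwartz function with Fourier support in $B_{c_1/2}$ is essential, not optional.
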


\begin{proof}
    This is a repetition of Cohen, we write every constant explicitly in $\nu$. Note that in the proof we will use $C_d$ to denote a very large constant depending only on dimension $d$, whose value may differ at each occurrence.
    
Now, as a first step, we want to construct a good weight $\omega$ that satisfies the regularity and growth condition in
BM Theorem \ref{thm:QBM}, which however is also controlled on the fractal set $\mathbf Y$ by the damping exponent in \eqref{eq:dampingY} as 
\[\omega(x)\le -c \frac{|x|}{(\log(2+|x|))^\alpha}+C\,, \quad x\in \mathbf Y \quad \text{with} \  \alpha=1-\frac{\nu}{C_d\log|\nu|}\,.\]

For all \(k\geq 1\), we denote the dyadic annuli by \(A_{k}=\{x\in\mathbb{R}^{d}\,:\,2^{k}\leq|x|\leq 2^{k+1}\}\) and take 
\[
    W_k=\frac{2^k}{k^s}
\]
where $s=0.2$ is a uniform parameter.  The choice of $\alpha$ and $s$ is to assure the regularity and growth condition \eqref{cond:reg} and \eqref{cond:gr} in BM theorem for the weight to be constructed. This will be seen later. 

Consider the collection  \(\mathcal{Q}_{k}=\{ Q\} \) of cubes with side-length $W_k$ 
defined as follows:
\[
    \mathcal{Q}_k=\left\{ Q =\prod_{j=1}^d \left(W_k\frac{a_j}{3},W_k(\frac{a_j}{3}+1)\right):a_j\in \mathbb{Z}, \ Q \cap A_k\neq \emptyset\right\}.
\]
Then we have a covering of finitely overlapping cubes for the dyadic annuli:
    \begin{align*}
        A_k&\subset \bigcup_{ Q \in \mathcal{Q}_k}  Q/2 ,\\
        \bigcup_{ Q \in \mathcal{Q}_k} Q &\subset
        \{x\in\mathbb{R}^{d}\,:\,2^{k-1}\leq|x|\leq 2^{k+2}\},\quad k\geq k_0>\lceil (2d^{1/2})^{1/s}\rceil,
    \end{align*}
where $Q/2$ is the scaling cube with same center as and half length of $Q$ and $k_0$ is a large integer to be chosen later. Note that each point in $\mathbb{R}^d$ intersects at most $C=3^d$ number of cubes in $\mathcal{Q}_k$ for fixed $k$.

For the control of weight function $\omega$ on $\mathbf Y$, we only need to consider the effective cubes intersecting with $\mathbf Y$. 
Let
\begin{align*}
\mathcal{S}_{\mathbf{Y},k} &= \{ Q \in\mathcal{Q}_{k}\,:\, Q \cap(\mathbf{Y}\cap A_{k})\neq\emptyset\}, \\
\mathbf{Y}_{k} &= \bigcup_{ Q \in\mathcal{S}_{\mathbf{Y},k}} Q\,.
\end{align*} We will need to construct smooth functions supported on these effective cubes and sets.
Let \(\{\eta_{ Q }\}_{ Q \in\mathcal{Q}_k}\) be a collection of bump functions satisfying
\begin{align}
 \eta_Q &\in[0,1] \qquad \quad
     \supp \eta_{ Q } \subset Q\,,
     \nonumber\\
\eta_Q(x) &=1 \qquad \qquad \quad\forall\,x \in Q/2, \nonumber\\
\|D^{a}\eta_{ Q }\|_{\infty} &\leq C_{d,a}\,W_{k}^{-a} \quad \ \,   \forall\, a \geq 0, \label{eq:6.3 in Cohen's paper} \\
\sum_{ Q \in\mathcal{Q}_{k}}\eta_{ Q }(x) &\in[1,C_d] \qquad \quad \forall\, x\in A_{k}. \label{eq:partition}
\end{align}
Actually, we can construct these $\eta_Q$ by dilations and translations as $\eta_Q=\eta(W_k^{-1}(x-c_Q))$ where $\eta$ is a fixed radial bump function and $c_Q$ is the center of $Q$. Now we can construct the negative dyadic $C_c^\infty$ functions supported on the effective set $\mathbf Y_k$ with height $\frac{2^k}{k^\alpha}$ by 
\begin{equation*}
\omega_{k}=-\frac{2^{k}}{k^{\alpha}}\sum_{ Q \in\mathcal{S}_{\mathbf{Y},k}}\eta_{ Q } \label{eq:weight}
\end{equation*}
{with} support 
\[\operatorname{supp}\omega_{k} \subset  \mathbf Y_k\subset \{x\in\mathbb{R}^{d}\,:\,2^{k-1}\leq|x|\leq 2^{k+2}\}  \qquad \forall \, k\geq k_0\,.\]  
Note that from \eqref{eq:partition}
\begin{equation}\label{eq:6.6 in Cohen}
    \omega_{k}(x) \le  
 -\frac{2^{k}}{k^{\alpha}} \quad \text{for } x \in \mathbf{Y} \cap A_{k}.
\end{equation}

Now we set the negative $C^\infty$
weight function 
\begin{equation}\label{eq:omega}
    \omega = \sum_{k \geq k_{0}} \omega_{k},
\end{equation}
supported on $B_{2^{k_0-1}}^{\complement}$, where \(k_{0}=k_0(d,\mu) \geq 2\) is the smallest integer such that
\begin{equation}\label{eq:Choice-of-k0}
    W_{k_{0}} > \frac{\mu}{2\sqrt d}\;\text{ and }\; k_0^s> 10d.
\end{equation}
Actually, we can choose $k_0$ such that
\[W_{k_0}=\max\left\{\frac{\mu}{2\sqrt d}, \frac{2^{[(10d)^{1/s}]}}{10d}\right\}+1. \]
The choice of $k_0$ can be seem from \eqref{eq:YkinY+B} and the condition in Lemma \ref{lem:porous_transf}. Notice that \(\omega(x) = 0\) for \(|x| \leq 2^{k_0-1}\).  So the first vanishing condition in Theorem \ref{thm:QBM} is naturally satisfied. 
Besides, by \eqref{eq:omega}, \eqref{eq:6.6 in Cohen} and \eqref{eq:partition}, we have the control of $\omega$ on $Y$
\begin{equation*}
    \omega(x) \leq -\frac{1}{20} \frac{|x|}{\bigl(\log(2 + |x|)\bigr)^{\alpha}} \quad \text{for } |x| > 2^{k_{0}} \text{ and } x \in \mathbf{Y}, 
\end{equation*}
\begin{equation}\label{eq:omegacontrol_bydamping}
    \omega(x) \leq -\frac{1}{20} \frac{|x|}{\bigl(\log(2 + |x|)\bigr)^{\alpha}} + C(\mu,d) \quad \text{for all } x \in \mathbf{Y},
\end{equation}
where $C(\mu,d)$ can be taken as
\[
    C(\mu,d)=\mu+C_d.
\]
Actually, it suffices to satisfy
\[C(\mu,d)\ge \sup_{|x|\le 2^{k_0} } \frac{|x|}{\bigl(\log(2 + |x|)\bigr)^{\alpha}}=\frac{2^{k_0}}{\bigl(\log(2 + 2^{k_0})\bigr)^{\alpha}}\le  \frac{\max\{\mu,C_d\}}{2\sqrt d\max\{\log \mu, C_d\}^{\alpha-s}}\,,\]
being aware of the support of $\omega$, and $\omega_k$ and
since we will fix $s=0.2$ and $\alpha\in (0.9,1)$ which is very near $1$ if $\nu$ is small, we can either take much smaller constant
\[C(\mu,d)=\frac 1{C_d}\mu/(\log\mu)^{0.7}.\]
However, the log improvement will not contribute in the dominant (constant) term in the quantitative FUP. 

Now we begin to check the regularity and growth condition of $\omega$ in the BM Theorem \ref{thm:QBM}. 
We first establish the regularity property \eqref{cond:reg} for $\omega$. For any \(a \geq 0\), \(k \geq 1\), we have
\begin{equation}\label{eq:cre-from-porous}
    |D^{a}\omega_{k}| \leq C_{a,d} \,W_{k}^{-a}\, 2^{k} \,k^{-\alpha} \sum_{{Q} \in \mathcal{S}_{\mathbf{Y},k}} \mathbf 1_{{Q}} \leq C_{a,d}\,2^{(1-a)k}\, k^{as - \alpha}\, \mathbf 1_{\mathbf{Y}_{k}}, 
\end{equation}
where we use \eqref{eq:6.3 in Cohen's paper} for the first inequality and finite overlapping \eqref{eq:partition} of the cubes in \(\mathcal{Q}\) for the second inequality. As long as \(3s < \alpha\), \(\omega\) satisfies the regularity condition \eqref{cond:reg} with a constant \(C_{\mathrm{reg}}=C_d\) that depends only on the dimension $d$ (and the fixed radial bump function $\eta$).

Now we check the growth condition \eqref{cond:gr}. Note that 
\begin{equation}\label{eq:YkinY+B}
    \mathbf{Y}_{k} \subset (\mathbf{Y} \cap A_{k}) + B(0,2W_k\sqrt{d})\,.
\end{equation}
Since $\mathbf{Y}\subset [-3h^{-1},3h^{-1}]^{d}$, 
we can assume $2^{k}(1-\frac1{10\sqrt d})\leq 3h^{-1}\sqrt{d}$ since $k\ge k_0$, see \eqref{eq:Choice-of-k0}, otherwise $\mathbf{Y}_{k}$ will be an empty set. 
Since $k\ge k_0$  and $\mu<2W_k\sqrt d<h^{-1}$,  we can now apply Lemma \ref{lem:porous_transf} with parameter
\[
    \nu'=\nu/2,\quad \alpha_0=\mu,\quad \alpha_1=h^{-1},\quad r=2W_k\sqrt{d}
\]
to deduce that \(\mathbf{Y}_{k}\) is \(\nu/2\)-porous on lines from scales \(4W_{k}\sqrt{d}/\nu\) to $h^{-1}$ (this is a vacuous statement if $4W_{k}\sqrt{d}/\nu\geq h^{-1}$). 

We first claim that 
\begin{equation}\label{eq:measure-of-Ykcapline}
    |\mathbf{Y}_{k} \cap \ell|\leq C_d\,2^{k}\,k^{-s\gamma} \quad \text{ for all lines } \ell 
\end{equation}
for fixed exponent
\begin{equation}\label{eq:gamma}
    \gamma=\frac{\nu}{C_d|\log \nu|}<1\,.
\end{equation}
Actually, taking $R=\frac3{10}\frac{2^{k}}{\sqrt{d}}<\alpha_1=h^{-1}$,  then when  $4W_{k}\sqrt{d}/\nu\geq R$, we have $k^{-s}/\nu\geq \frac{1}{100d}$, and the trivial estimate 
\[
    |\mathbf{Y}_{k} \cap \ell|\leq 2^{k+2}\leq 2^{k+2}(k^{-s}/\nu)^{\gamma}C_d^{\gamma}\leq C_d\nu^{-\gamma}2^kk^{-s\gamma}\,.
\]
When $4W_{k}\sqrt{d}/\nu<R$, 
we can decompose $\ell\cap A_k$ into $C\sqrt d$ many line segments of 
length $R$.
Then applying Lemma \ref{lem:measurelineintersection} to  each segment and $\mathbf Y_k$, we obtain
\[
    |\mathbf{Y}_{k} \cap \ell|\leq C_d2^{k+1}\left(\frac{20W_kd}{2^{k}\nu}\right)^{\gamma}\leq C_d\,\nu^{-\gamma}\,2^{k}\,k^{-s\gamma}.
\]
This proves that 
\[
    |\mathbf{Y}_{k} \cap \ell|\leq C_d\,\nu^{-\gamma}\,2^{k}\,k^{-s\gamma} \quad \text{ for all lines } \ell 
    \]
which reduces to the claimed estimate \eqref{eq:measure-of-Ykcapline} by further noting that 
\[
    \nu^{-\gamma}=e^{\nu/C_d}\in (1, e^{\frac 1{C_d}}).
\]

Using \eqref{eq:measure-of-Ykcapline} for \(\ell = \{tx : t \in \mathbb{R}\}\), a line through the origin with direction $x\in \mathbb S^{d-1}$, we see
\begin{equation}
\label{eq:omega_k-integral-on-lines}
    2^{-k} \int_{0}^{\infty} |\omega_{k}(tx)| \, dt \leq \|\omega_k\|_{L^\infty} |\mathbf{Y}_{k} \cap \ell| \leq C_d2^{k} \, k^{-(\alpha + s\gamma)}\,.
\end{equation}
Let \(G^{*}(r)\) be the growth function  in \eqref{cond:gr} with \(r \in [2^{k}, 2^{k+1})\) for some $k\ge 2$ (note that $G^*(r)=0$ for $r\leq 4$). Using \eqref{eq:omega_k-integral-on-lines} we have the pointwise bound 
\begin{align*}
    G^{*}(r) &\leq C_d \sup_{x \in \mathbb S^{d-1}} 2^{-k} \int_{2^{k-1}}^{2^{k+2}} |\omega(tx)| \, dt \nonumber \\
    &\leq C_d \sup_{x \in \mathbb S^{d-1}} 2^{-k} \sum_{k-3 \leq j \leq k+3} \int_{0}^{\infty} |\omega_{j}(tx)| \, dt \nonumber \\
    &\leq C_d\frac{r}{(\log(2+r))^{\alpha + s\gamma}}\,.
\end{align*}

As long as \(\alpha + s\gamma > 1\), the growth condition \eqref{cond:gr} is satisfied with a constant $C_{\operatorname{gr}}$ which can be computed as 
\begin{equation}\label{eq:cgr-from-porous}
    C_{\operatorname{gr}}=C_d\int_{3}^\infty \frac{1}{r(\log r)^{\alpha+s\gamma}} dr=\frac{C_d}{(\alpha+s\gamma-1)(\log 3)^{\alpha+s\gamma-1}}\sim C_d/\gamma\,.
\end{equation}
Note we have fixed \(s = 0.2\) universally. We can choose \(\alpha \ge 1 - 0.1\gamma\), and then \(-\alpha + 3s \le -0.3\) and \(\alpha + s\gamma \ge 1 + 0.1\gamma>1\).


By applying QBM Theorem \ref{thm:QBM} with spectral radius \(\sigma < 1\) on $\omega$ with regularity and growth  constant from \eqref{eq:cre-from-porous} and \eqref{eq:cgr-from-porous} 
\[C_{\operatorname{reg}}=C_d, \qquad C_{\operatorname{gr}}={C_d}/{\gamma},\]
 we can obtain that, for any $0<\sigma<1$, there exists a function \(f \in L^{2}(\mathbb{R}^{d})\) satisfying
\begin{align*}
    &\operatorname{supp} \hat{f} \subset {B}_{\sigma/2}, \\
    &|f(x)| \geq \frac{1}{2} \quad \text{for } |x| \leq \frac{\sigma}{C_d}, \\
    &|f(x)| \leq C_d\sigma^{-C_{d}} \,\exp(\sigma \gamma\,\mu/C_d)\,\exp\left( -\sigma \frac{\gamma}{C_d} \frac{|x|}{(\log(2+|x|))^{\alpha}} \right) \quad \text{for } x \in \mathbf{Y}, \\
    &|f(x)| \leq C_d\sigma^{-C_{d}} \quad \text{for } x \in \mathbb{R}^{d}.
\end{align*}
The last two estimates are derived from  \eqref{eq:omegacontrol_bydamping} and \eqref{eq:BMcontrol-f-by-omega} ($\omega\le 0$).
In order to satisfy the $(-d)$-decay \eqref{eq:damping(-d)decay} in $\mathbb R^d$, we multiply $f$ with a Schwartz function. 
Now let \(\varphi_0 : \mathbb{R}^d \to \mathbb{R}\) be a fixed Schwartz function with supp \(\hat{\varphi}_0 \subset B(0,1)\) and $\varphi_0(0)=1$. Then there exists some $r_d\in (0,1/2)$ such that $\varphi_0(x)\geq 1/2$ for all $|x|<r_d$. Let \(\varphi(x) := \varphi_0(\sigma r_dx/10)\) so supp \(\hat{\varphi} \subset B(0,{r_d\sigma/10})\) and \(\int \hat{\varphi} = 1\). Let  
\( f_1 = f\varphi \). Then \( \hat{f}_1 = \hat{f} * \hat{\varphi} \) and
\begin{align*}
    &\operatorname{supp} \hat{f_1} \subset {B}_{\sigma}, \\
    &|f_1(x)| \geq \frac{1}{2} \quad \text{for } |x| \leq \frac{\sigma}{C_d}, \\
    &|f_1(x)| \leq C_d\,\sigma^{-C_{d}}\,\exp( \sigma \gamma\,\mu/C_d)\, \exp\left( -\sigma \frac{\gamma}{C_d} \frac{|x|}{(\log(2+|x|))^{\alpha}} \right) \quad \text{for } x \in \mathbf{Y}, \\
    &|f_1(x)| \leq C_d\sigma^{-C_{d}}\langle x\rangle^{-d} \quad \text{for } x \in \mathbb{R}^{d}.
\end{align*}
The last equation follows from the trivial estimate \(|\varphi_0(x)| \leq C_d\langle x\rangle^{-d}\). Dividing through by the constant 
\[C_d\,\sigma^{-C_{d}}\,\exp(\sigma \gamma\,\mu/C_d)\]
and by \eqref{eq:gamma}, we obtain the $l_2$ damping function with parameters 
    \begin{align*}
        c_{1} = \sigma\,, \qquad
        c_{2} = \frac{\sigma^{C_{d}}}{C_d \exp(\sigma \gamma\,\mu/C_d)}=\frac{\sigma^{C_{d}}}{C_d \exp( \frac{\sigma \nu\mu}{C_d|\log\nu|})}\,, \qquad
        c_{3} = \frac{\gamma}{C_d}\sigma=\frac{\sigma \nu}{C_d|\log\nu|}\,.
    \end{align*}
  This completes the proof of the theorem.
\end{proof}

\subsection{Quantitative FUP}
By combining the results from the previous two subsections, we deduce  the FUP from porosity. Applying Theorem \ref{thm:Cohen'sl2dampingtoFUP} and  \ref{thm:Porous-to-damping} with the choices 
 \[\mu=10\sqrt d/\nu,\qquad c_1=\frac\nu{20\sqrt d}\,, \]
 we can now prove the quantitative FUP stated in Theorem \ref{thm:qfup}.

\begin{proof}[Proof of Theorem \ref{thm:qfup}]

As before, in the proof we will use $C_d$ to denote a very large constant depending only on dimension $d$, whose value may differ at each occurrence.
    
    Let the two fractal sets in the physical and Fourier spaces be
    \begin{itemize}
        \item  $\mathbf X\subset[-1,1]^d$ is $\nu$-porous on \emph{balls} from scales $h$ to $1$.
        \item $\mathbf Y\subset [-h^{-1},h^{-1}]^d$ is $\nu$-porous on \emph{lines} from scales $1$ to $h^{-1}$.
    \end{itemize}
    
    We \emph{claim} that, $\mathbf Y$ satisfies the damping condition in Theorem \ref{thm:Cohen'sl2dampingtoFUP}  with parameters
\begin{align}
        \label{eq:Parametr for Y in the proof of theorem QFUP}
            \alpha&=1-\frac{\nu}{10C_d\log|\nu|} \hskip16mm
            c_1=\frac{\nu}{20\sqrt d} \nonumber\\
            c_2&= \frac{\nu^{C_d}}{C_d\exp(\frac1{C_d}\nu/|\log \nu|)} \qquad
            c_3=\frac{\nu^2}{C_d|\log\nu|}\,.
        \end{align}
    Actually, for $s\in (h,10h]$, this claim is trivial since
    \[
        \mathbf Y_{s,\eta}:=s\mathbf Y+[-4,4]^d+\eta\subset [-30,30]^d\,.
    \]
For all $10h<s<1$ and $\eta\in[-h^{-1}s-5,h^{-1}s+5]^d$, the set $\mathbf Y_{s,\eta}$
    is $\nu/2$-porous on lines from scales $10\sqrt{d}/\nu$ to $h^{-1}s$ by Lemma \ref{lem:porous_transf},  contained in $[-3h^{-1}s,3h^{-1}s]^d$.
    So applying Theorem \ref{thm:Porous-to-damping} with $\mu=10\sqrt{d}/\nu$ and $h$ therein substituted by $h^{-1}s$, the set $\mathbf Y_{s,\eta}$ admits a damping functions with parameter $c_1=\frac{\nu}{20\sqrt{d}}$ and 
    \[
        \begin{aligned}
            &c_2\geq \frac{c_1^{C_d+2d}}{C_d\exp(\gamma c_1\mu/C_d)}\geq \frac{\nu^{C_d}}{C_d\exp(\frac1{C_d}\,\nu/|\log \nu|)}\\
            &c_3\geq \frac{\nu^2}{C_d|\log\nu|}\,, \qquad
            \alpha\ge1-\frac{\nu}{10C_d\log|\nu|}\,.
        \end{aligned} 
    \]
    This proves our claim.
    
    So we can apply Theorem \ref{thm:Cohen'sl2dampingtoFUP} 
    with damping parameters in \eqref{eq:Parametr for Y in the proof of theorem QFUP}
    to obtain that 
    \[
        \|f\mathbf 1_\mathbf X\|_2\le C h^{\beta}\|f\|_2
    \]
where using the formula of Han-Schlag,
    \[\beta=\beta(d, L,c_1,c_2,c_3,\alpha)=\beta(d,\lceil \sqrt d/\nu\rceil,c_2,c_3,\alpha)\] 
  given by \eqref{eq:hs-beta} in Theorem \ref{thm:hsl1dampingtofup}, and $C>0$ is a constant independent of $h$ and $f$.

    Next we calculate the exponent $\beta$ explicitly. We first calculate $R_1$ defined in \eqref{eq:definition of R1 in Han-Schlag's result}. Since we can choose $C_d$ large enough, so in the ‌maximization process we can ignore $(8d)^4$. 
Since $c_3$ and $1-\alpha$ are very small,  we can assume $\max\{c_3,1-\alpha\}<0.01$, and then we have
    \[
      \max\left\{ \exp{[4^{\frac{1}{1-\alpha}}]}, \left(\frac{2d}{c_3}\right)^2\right\} \leq 
        \exp{\left[\left(\frac{16\pi C_d}{c_3}\right)^{\frac{1}{1-\alpha}}\right]}\,.
    \]
Moreover we have 
         \begin{align*}
          \exp{\left[\left(\frac{16\pi C_d}{c_3}\right)^{\frac{1}{1-\alpha}}\right]}&\leq \exp\left[\left(C_d|\log \nu|\nu^{-2}\right)^{\frac{C_d|\log \nu|}{\nu}}\right],\\
            \left[4\log\left(\frac{2C_d}{c_2^2}\right)\right]^2&\leq16(\log 2C_d+2|\log c_2|)^2\leq C_d|\log\nu|^{2},\\
            \left[\frac{(d|\log c_1|)^d}{c_3}\right]^8&\leq c_3^{-8}C_d(1+\log(\nu^{-1}))\leq C_d\,\nu^{-17},
        \end{align*}
    so in the notation of \eqref{eq:definition of R1 in Han-Schlag's result} we can actually take 
    \[
        R_1=\exp\left(\left(C_d|\log \nu|\nu^{-2}\right)^{{C_d|\log \nu|}{\nu^{-1}}}\right).
    \]
    Then we can further choose 
    \[ 
        C_*=\exp(R_1+2)
    \]
    in \eqref{eq:definition of C* in Han-Schlag's result} and choose  in \eqref{eq:definition of gamma_0T and T_0 in Han-Schlag's result} and \eqref{eq:hs-beta} 
 \[T_0=\log(C_dC_*^2)\leq C_dR_1,\quad T=\lceil C_dR_1\rceil,\quad
            \gamma_0(T)\geq \frac{1-c_{\nu,d}}{2C_*^2}\,.
    \]
So we can finally choose $\beta$ as 
    \[
        \begin{aligned}
        \beta&\geq \frac{-\log(1-\frac{1-c_{\nu,d}}{4C_*^2})} {T\log L}\geq 
        \frac{1-c_{\nu,d}}{4C_*^2T\log L}\geq \frac{1}{C_dC_*^2R_1|\log \nu| }\geq \frac{1}{C_d\exp(3R_1)}\\
        &\geq \frac 1{C_d}\exp\left[-3\exp\left((C_d\nu^{-2}|\log \nu|)^{C_d|\log \nu|\nu^{-1}}\right)\right]\,.
        \end{aligned}
    \]
    This completes the proof.
\end{proof}

\section{Spectral gap for quasi-Fuchsian 3-manifold}\label{sec:Spectral_gap}
In this section, we will prove Theorem \ref{thm:Spectral}.
We first recall the initial work of Dyatlov--Zahl \cite{dyatlov2016spectral}, who 
reduce the spectral gap problem to an FUP. 
In this section, we consider the specific case  $d=2$. Hereafter, the notation $\beta(\nu)$ will always refer to the constant formula $\beta(\nu, d)$ defined in \eqref{eq:beta} with $d=2$.

Let $\mathbb{H}^3$ be the three-dimensional hyperbolic space, 
$\Gamma \subset \operatorname{Iso}(\mathbb{H}^3)$ a Kleinian group,
$M = \Gamma \backslash \mathbb{H}^3$ the associated hyperbolic $3$-manifold, and 
$\Lambda = \Lambda(\Gamma) \subset \mathbb{S}^2$ the limit set of $\Gamma$. 
For $\alpha > 0$, define the $\alpha$-neighbourhood of $\Lambda$ on $\mathbb{S}^2$ by
\[
\Lambda(\alpha)=\{\, y \in \mathbb{S}^{2} \mid d(y,\Lambda) \leq \alpha \,\},
\]
where $d(y, y') = |y - y'|$ denotes the Euclidean distance in $\mathbb{R}^{3}$ restricted to $\mathbb{S}^{2}$.

Consider the operator $\mathcal{B}=\mathcal{B}_{\chi}(h):L^{2}(\mathbb{S}^{2})\to L^{2}(\mathbb{S}^{2})$ given by
\begin{equation}
\label{eq:b-chi}
\mathcal{B}_{\chi}(h)f(y)
\;=\;
(2\pi h)^{-1} \int_{\mathbb{S}^{2}} 
|y - y'|^{2i/h} \, \chi(y, y') \, f(y') \, dy',
\end{equation}
where $dy'$ is the standard volume form on $\mathbb{S}^{2}$ and 
$\chi \in C_c^{\infty}(\mathbb{S}_{\Delta}^{2})$ with
\[
\mathbb{S}_{\Delta}^{2} 
= \{\, (y, y') \in \mathbb{S}^{2} \times \mathbb{S}^{2} \mid y \neq y' \,\}.
\]
\begin{definition}
The limit set $\Lambda$ is said to satisfy the \emph{hyperbolic fractal uncertainty principle} with exponent $\beta>0$ if, for every $\varepsilon>0$, there exists $\rho\in(0,1)$ such that
\begin{equation}
\label{eq:hyperfup}
   \| \mathbf 1_{\Lambda(C_{1} h^{\rho})} \mathcal{B}_{\chi}(h) \mathbf 1_{\Lambda(C_{1} h^{\rho})} \|_{L^{2}(\mathbb{S}^{2})\to L^{2}(\mathbb{S}^{2})}\leq Ch^{\beta-\varepsilon},\quad h\in(0,1/100)  
\end{equation}
for any $h$-independent constant $C_{1}>0$ and any cutoff function $\chi$.  
Here the constant $C>0$ may depend on $C_{1}, \varepsilon, \rho, \chi$, and $\Lambda$, and $\mathcal{B}_{\chi}$ is the Fourier integral operator defined in \eqref{eq:b-chi}.
\end{definition}

\cite{dyatlov2016spectral} relates the existence of an essential spectral gap for convex co-compact hyperbolic manifolds to the validity of hyperbolic FUP. 
In light of \cite[Theorem 3]{dyatlov2016spectral}, Theorem \ref{thm:Spectral} follows immediately from the following proposition.
\begin{proposition}
\label{prop:hyperfup}
The limit set
 $\Lambda$ 
   satisfies the hyperbolic FUP \eqref{eq:hyperfup} with exponent 
   \begin{equation}\label{eq:beta-hyper}
  \beta=\frac 12\,\beta(\nu), \quad \nu=\frac{1}{10^8(10^{11}C_\mu^2)^{1/(\delta-1)}C_{\operatorname{arc}}^2}\,,
\end{equation}
where the constants $C_\mu$ and  $C_{\mathrm{arc}}$ are given in \eqref{eq:delta_regularity_of_Lambda} and \eqref{eq:three_point_condition} respectively.
\end{proposition}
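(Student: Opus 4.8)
The plan is to reduce the hyperbolic FUP on $\mathbb{S}^2$ to the Euclidean quantitative FUP of Theorem~\ref{thm:qfup} (in the symmetric form \eqref{eq:fup-sym}) via a chart, exactly along the lines of \cite{dyatlov2016spectral,bourgain2018spectral,kim2025semiclassical}, with the only genuinely new ingredient being a quantitative verification that the limit set $\Lambda$ of a quasi-Fuchsian group is $\nu$-porous on lines with the explicit $\nu$ in \eqref{eq:beta-hyper}. First I would set up the reduction: choosing a suitable stereographic (or geodesic normal) coordinate chart on a piece of $\mathbb{S}^2$ away from the diagonal, the operator $\mathcal{B}_\chi(h)$ becomes, up to conjugation by smooth elliptic factors and an $h$-dependent rescaling, a semiclassical Fourier integral operator whose phase is (a nondegenerate perturbation of) $x\cdot\xi/h$; a stationary-phase / almost-orthogonality argument à la Dyatlov--Jin--Nonnenmacher \cite{MR4374954} then bounds $\|\mathbf 1_{\Lambda(C_1h^\rho)}\mathcal{B}_\chi(h)\mathbf 1_{\Lambda(C_1h^\rho)}\|$ by $Ch^{-\varepsilon}$ times $\|\mathbf 1_{\mathbf X}\mathcal{F}_h\mathbf 1_{\mathbf Y}\|_{L^2\to L^2}$ for $\mathbf X,\mathbf Y$ the images of the relevant $h^\rho$-neighborhoods of $\Lambda$ in the two charts, at the new scale $\tilde h = h^{1-\rho}$ (this accounts for the factor $\tfrac12$ in $\beta=\tfrac12\beta(\nu)$: one takes $\rho$ close to $1/2$ so that the loss $h^{-\varepsilon}$ is absorbed and the gain is $\tilde h^{\beta(\nu)}=h^{(1-\rho)\beta(\nu)}\to h^{\beta(\nu)/2}$).

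The heart of the matter is then Proposition~\ref{prop:Xis-line-porous}: the sets $\mathbf X$ and $\mathbf Y$ above are $\nu$-porous on lines from scales $h^{1-\rho}$ to $1$ (respectively on the Fourier side, after rescaling, from scales $1$ to $\tilde h^{-1}$), with $\nu$ as in \eqref{eq:beta-hyper}. To prove this I would argue as follows. By Bowen's theorem $\Lambda$ is a quasi-circle, hence a Jordan curve satisfying the three-point (Ahlfors) condition \eqref{eq:three_point_condition} with constant $C_{\mathrm{arc}}$, and it carries a $\delta$-regular measure $\mu$ with constant $C_\mu$ as in \eqref{eq:delta_regularity_of_Lambda}, $\delta=\dim_H\Lambda\in(1,2)$. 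The three-point condition is precisely what prevents the curve from leaving a ball of radius $R$ and re-entering a much smaller concentric ball: given a line segment $\tau$ of length $R$, if $\Lambda$ came within $\nu R$ of every point of $\tau$ then, sampling $\sim \nu^{-1}$ equally spaced points along $\tau$, one would find an arc of $\Lambda$ joining two points at distance $\sim R$ which stays inside an $O(\nu R)$-neighborhood of $\tau$; combined with the $\delta$-regularity lower bound $\mu(B(x,r))\ge C_\mu^{-1} r^\delta$ this forces $\mu$ of that tubular neighborhood to be $\gtrsim R^\delta$ while the three-point / arc condition bounds it by $\lesssim C_{\mathrm{arc}}^2 (\nu R)\cdot R^{\delta-1} = C_{\mathrm{arc}}^2 \nu R^\delta$, a contradiction once $\nu$ is below the threshold in \eqref{eq:beta-hyper}. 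The explicit exponents $10^8$, $10^{11}$, $1/(\delta-1)$ come from tracking: the number of sample points, the ratio of $\delta$-dimensional to $1$-dimensional mass (which brings in $R^{\delta-1}$ and hence a $C_\mu^{2/(\delta-1)}$ when one optimizes over the scale at which the contradiction is cleanest), and the square of the arc constant from applying \eqref{eq:three_point_condition} twice. I would isolate this as Lemma~\ref{lem:non-concentration} (the "non-concentration" bound, whose constant $C_{\mathrm{out}}$ blows up as $\delta\to 1$, matching Example~\ref{ex:circle}) plus the porosity transfer lemmas of Appendix~\ref{sec:appA}.

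With line porosity of $\mathbf X$ and $\mathbf Y$ in hand at the correct scales, Theorem~\ref{thm:qfup} (equivalently \eqref{eq:fup-sym}) applies directly with dimension $d=2$ and the stated $\nu$, giving $\|\mathbf 1_{\mathbf X}\mathcal{F}_{\tilde h}\mathbf 1_{\mathbf Y}\|\le C\tilde h^{\beta(\nu,2)}$; feeding this back through the reduction of the first paragraph and letting $\rho\uparrow 1/2$ yields \eqref{eq:hyperfup} with $\beta=\tfrac12\beta(\nu)$, which is Proposition~\ref{prop:hyperfup}. The main obstacle I anticipate is the porosity-on-lines verification and the honest bookkeeping of its constant: one must be careful that the three-point condition is used in a scale-invariant way, that the passage from "close to every point of $\tau$" to "an arc trapped in a thin tube" does not lose more than a dimensional constant, and that the $\delta\to 1$ degeneracy is quarantined into $C_\mu^{1/(\delta-1)}$ so that the final $\nu$ is a clean explicit function of $C_\mu$, $C_{\mathrm{arc}}$, and $\delta$ alone. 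The FIO reduction, by contrast, is essentially standard and I would borrow it wholesale from \cite{dyatlov2016spectral,kim2025semiclassical}, only checking that no step secretly introduces an $h$-power loss beyond the harmless $h^{-\varepsilon}$.
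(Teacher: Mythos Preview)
Your overall architecture is right---reduce the hyperbolic FUP to the Euclidean one via an FIO argument, and feed in line-porosity of the limit set---but two of your key mechanisms are off.

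\textbf{The factor $\tfrac12$.} Your explanation (take $\rho\to\tfrac12$ so that $\tilde h=h^{1-\rho}$ and the FUP gain becomes $h^{(1-\rho)\beta}\to h^{\beta/2}$) does not match the actual reduction and would in fact fail: in the paper (following \cite{kim2025semiclassical}) one takes $\rho\in(3/4,1)$ close to~$1$, and the FIO estimate reads $Ch^{\beta/2-2(1-\rho)}$. With $\rho=\tfrac12$ the loss $2(1-\rho)=1$ swamps the tiny $\beta/2$ and the bound is vacuous. The genuine source of the $\tfrac12$ is the higher-dimensional FIO reduction of Kim--Miller: one localizes in both variables to $h^{1/2}$-balls so the phase linearizes, and in $d\ge2$ the change of variables $\xi=\partial_{x'}\Phi(x_0,x')$ distorts line-porosity, forcing the $h^{1/2}$-scale localization and hence halving the exponent (cf.\ Proposition~\ref{prop:FUP for general FIO,Kim's version} and Proposition~\ref{prop:FUP_FIO_our_version}). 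In dimension~$1$ there is no such loss. The paper also needs the extra observation that for the specific phase $\Phi$ in \eqref{eq:B_b} one has $s_{\max}/s_{\min}(\partial^2_{xx'}\Phi)\equiv1$, so that the porosity constant degrades only by a universal factor.

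\textbf{Line-porosity of $\Lambda$.} Your sketched contradiction---sample $\sim\nu^{-1}$ points, conclude that an arc of $\Lambda$ of diameter $\sim R$ is trapped in an $O(\nu R)$-tube around $\tau$, then compare $\mu(\text{tube})\gtrsim R^\delta$ against an upper bound $\lesssim C_{\mathrm{arc}}^2\nu R^\delta$---has two gaps. First, the three-point condition \eqref{eq:three_point_condition} only says \emph{one} of the two arcs between $z_1,z_2$ has diameter $\le C_{\mathrm{arc}}|z_1-z_2|$; it does not force the short arcs between consecutive samples to concatenate, so you do not directly obtain a single arc trapped in the tube. Second, even given such an arc, the lower bound $\mu(B(z,R))\ge C_\mu^{-1}R^\delta$ counts mass of all of $\Lambda$ in $B(z,R)$, not just of the tube, so the measure comparison does not close. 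The paper's argument for Proposition~\ref{prop:Xis-line-porous} is instead a direct geometric construction of a hole: one picks five well-separated sample points $p_1,\dots,p_5$ on $I$ with nearby $q_i\in X$, uses Lemma~\ref{lem:non-concentration} (this is where $\delta>1$ enters, and where the $C_\mu^{2/(\delta-1)}$ appears) to find an auxiliary $q$ far from the line, applies a small Ramsey-type Lemma~\ref{lem:ramsey} to order three of the $q_i$ cyclically on the curve relative to $q$, invokes non-concentration again near $q_2$ to produce $q_4$, and then uses the three-point condition several times to pin down a point $p_6\in I$ around which a ball of radius $2\nu r$ provably misses every arc of $\pi(\Lambda)$. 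The roles of $C_{\mathrm{arc}}$ and $C_{\mathrm{out}}$ are separated cleanly in this construction, yielding the explicit $\nu$ in \eqref{eq:nu-of-X}.
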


By means of a partition of unity, we may assume that the support 
of $\chi$ is contained in $\Sigma\times \Sigma$, where $\Sigma$ is the region of $\mathbb{S}^2$ obtained by removing a spherical cap of solid angle $\pi/2$. 
Let $\pi:\mathbb{S}^2\to {\mathbb{R}^2}\cup \{\infty\}$ be the stereographic projection
sending $\Sigma$ to a subset of $B_{\mathbb R^2}(0,5)$. Then the desired FUP estimate \eqref{eq:hyperfup} with exponent \eqref{eq:beta-hyper} for $\mathcal{B}_{\chi}$ can be reduced to the following estimate for a new  Fourier integral operator ${B}_\chi:L^2(\mathbb{R}^2)\to L^2(\mathbb{R}^2)$ with a specific logarithmic phase.

\begin{proposition}
\label{prop:hyperFUPEuclidean}
Let $\beta$ be in \eqref{eq:beta-hyper}. 
For any $\epsilon>0$, there exists some $\rho\in(0,1)$ such that for any  $C_1>0$, there exists $C>0$, independent of $h$, 
such that
\begin{equation}\label{eq:hyperbolicFUP-euclidean}
    \| \mathbf 1_{X(C_{1}h^{\rho})} {B}_\chi(h) \mathbf 1_{X(C_{1}h^{\rho})} \|_{L^{2}(\mathbb{R}^2)\to L^{2}(\mathbb{R}^2)}\leq Ch^{\beta-\varepsilon},\quad h\in(0,1/100)  
\end{equation}
where 
    \begin{align}\label{eq:B_b}
X=&\pi(\Lambda)\cap B(0,5), 
\nonumber\\
   {B}_\chi(h)f(x)=&(2\pi h)^{-1}\int_{\mathbb{R}^2} e^{i\Phi(x,x')/h}\chi(x,x') f(x')dx',\\
    \Phi(x,x'):=&2\ln (|\pi^{-1}(x)-\pi^{-1}(x')|)=2\ln\left(\frac{2|x-x'|}{\sqrt{
    ({1+|x|^2})(1+|x'|^2)}}\right), \nonumber
    \end{align}
and $\chi(x,x')\in C_c^\infty(B(0,10)^2)$ is any cutoff supported away from the diagonal.
\end{proposition}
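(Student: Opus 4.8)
The strategy is to reduce the estimate for the logarithmic-phase Fourier integral operator $B_\chi(h)$ to the model Euclidean FUP inequality \eqref{eq:fup-sym} of Theorem \ref{thm:qfup} in dimension $d=2$. First I would localize: using a smooth partition of unity on $B(0,10)^2$ into boxes of side $\sim\sqrt h$ (or a mesoscopic scale to be optimized together with $\rho$), write $B_\chi=\sum_{j,k}B_{jk}$ where $B_{jk}$ has kernel supported in $Q_j\times Q_k$. On each such box the phase $\Phi(x,x')=2\ln|\pi^{-1}(x)-\pi^{-1}(x')|$ is, after Taylor expansion about the centers, a nondegenerate bilinear form plus a negligible remainder, because $\chi$ is supported away from the diagonal so $\nabla_x\nabla_{x'}\Phi$ is invertible with norm bounded above and below on $\mathrm{supp}\,\chi$ (this is the standard computation: the mixed Hessian of $2\ln|x-x'|$ plus the conformal factors is $\sim |x-x'|^{-2}$ times an invertible matrix). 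Thus each $B_{jk}$ is, up to a unitary change of variables and conjugation by modulations, a piece of the semiclassical Fourier transform $\mathcal F_h$ at scale $h$, and an almost-orthogonality (Cotlar–Stein) argument recombines the pieces at a cost of a polynomial-in-$h^{-1}$ factor that is absorbed by the $\varepsilon$ loss.

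The geometric input is that $X=\pi(\Lambda)\cap B(0,5)$, and its image under the relevant linear maps, is $\nu$-porous on lines at the appropriate range of scales, with $\nu$ as in \eqref{eq:beta-hyper}. This is exactly the content of the forthcoming Proposition \ref{prop:Xis-line-porous}, which I would invoke: the limit set of a quasi-Fuchsian group is a Jordan curve satisfying the three-point (arc-chord) condition \eqref{eq:three_point_condition} and the $\delta$-regularity bound \eqref{eq:delta_regularity_of_Lambda}, and these together force porosity on lines with the stated explicit $\nu$. Here Lemma \ref{lem:non-concentration} (whose constant $C_{\mathrm{out}}$ enters) controls how the $\delta$-regular measure cannot concentrate, and the three-point condition prevents the curve from returning close to a point after travelling far, which is precisely what rules out the line counterexample and yields porosity along \emph{lines} rather than merely balls. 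Since $B_\chi$ is sandwiched between $\mathbf 1_{X(C_1h^\rho)}$ on both sides, after the localization above the relevant physical set is $X(C_1 h^\rho)$ viewed at scale $h$, which is $\nu$-porous on balls from scale $h^{1-\rho}$ up to $1$ (porosity of a neighborhood from Lemma \ref{lem:porous_transf}–type reasoning in Appendix \ref{sec:appA}), and the Fourier side—being the image of the same set under the phase's Hessian map—is $\nu$-porous on lines. Applying Theorem \ref{thm:qfup} with $h$ replaced by $h^{1-\rho}$ (so that the porosity scale range is nontrivial) gives the bound $C h^{(1-\rho)\beta(\nu)}$, and choosing $\rho$ small depending on $\varepsilon$ yields the exponent $\tfrac12\beta(\nu)$ with the factor $\tfrac12$ coming from the scaling mismatch between the unit square $[-1,1]^2$ and the frequency box $[-h^{-1},h^{-1}]^2$ in the symmetric formulation \eqref{eq:fup-sym} together with the square-root loss in passing from the operator $B_\chi$ at scale $h$ to the FUP at scale $h^{1/2}$.

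The main obstacle I anticipate is the bookkeeping in the porosity transfer: one must verify that the linear maps induced by $\nabla_x\nabla_{x'}\Phi$ on each box distort the porosity parameter only by a controlled multiplicative constant (so the resulting $\nu$ is uniform over $j,k$ and equals the explicit value in \eqref{eq:beta-hyper} up to the absorbed constants), and that the neighborhood $X(C_1 h^\rho)$ inherits line porosity in frequency at scales $1$ to $h^{-(1-\rho)}$ after this distortion and rescaling—this is where the arc-chord constant $C_{\mathrm{arc}}$ and the regularity constant $C_\mu$ must be tracked carefully, as in the statement. A secondary technical point is ensuring the phase remainder on each $\sqrt h$-box is $o(h)$ uniformly, which requires the third derivatives of $\Phi$ to be bounded on $\mathrm{supp}\,\chi$; this holds since $\chi$ is supported away from the diagonal where $\ln|\pi^{-1}(x)-\pi^{-1}(x')|$ is smooth with all derivatives bounded. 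Once these are in place the conclusion follows by combining Theorem \ref{thm:qfup} with the almost-orthogonality estimate, as carried out in \cite{dyatlov2016spectral,bourgain2018spectral,kim2025semiclassical}.
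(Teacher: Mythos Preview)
Your plan is correct and matches the paper's route: localize, linearize the phase, apply the model FUP~\eqref{eq:fup-sym} with the line-porosity input from Proposition~\ref{prop:Xis-line-porous}; the paper packages the analytic step as Propositions~\ref{prop:FUP for general FIO,Kim's version}--\ref{prop:FUP_FIO_our_version}, citing~\cite{kim2025semiclassical}. The ``main obstacle'' you flag---uniform control of the porosity distortion under the linear maps $\partial^2_{xx'}\Phi$---has a one-line resolution that you nearly reached but did not state: a direct computation gives
\[
\partial^2_{xx'}\Phi \;=\; |x-x'|^{-2}\bigl(-I+2vv^{T}\bigr),\qquad v=\frac{x-x'}{|x-x'|},
\]
and $-I+2vv^{T}$ is a \emph{reflection}, hence orthogonal with both singular values equal to~$1$. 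Thus on each box the linearized map is a scalar times an isometry and preserves line-porosity with the \emph{same}~$\nu$, uniformly over all boxes and independently of the distance from $\operatorname{supp}\chi$ to the diagonal. This is exactly what pins $\beta$ down to the specific value in~\eqref{eq:beta-hyper} independently of~$\chi$; the paper encodes it via the singular-value ratio condition~\eqref{eq:PhaseCond2} with $C_3=1$, combined with a scalar rescaling of the phase (Proposition~\ref{prop:FUP_FIO_our_version}) to absorb the blow-up of $|x-x'|^{-2}$. Two small corrections: your account of the factor~$\tfrac12$ is partly off---there is no ``scaling mismatch'' in~\eqref{eq:fup-sym}, which is merely a restatement of~\eqref{eq:fup}; the halving is inherited from the mesoscopic localization as in~\cite[Proposition~4.14]{kim2025semiclassical}. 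And you want $\rho$ close to~$1$, not small, so that the covering loss $2(1-\rho)$ is absorbed into~$\varepsilon$.
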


\subsection{Line porosity condition of $X$}
We begin by examining properties of the limit set $\Lambda = \Lambda(\Gamma)$ in our setting. 
Standard Patterson--Sullivan theory (see for example \cite[Theorem 7]{sullivan1979density}) implies that  $\Lambda\subset \mathbb{S}^2$ is $\delta$-\textit{regular}
for 
\[\delta=\operatorname{dim}_H(\Lambda)\in (1,2).\] That is, there exists a Borel measure $\mu$ on $\mathbb{S}^2$ and a constant 
$C_\mu>0$ such that
\begin{equation}\label{eq:delta_regularity_of_Lambda}
    \frac{1}{C_\mu}r^\delta\leq \mu(B_{\mathbb{S}^2}(x_0,r)\cap \Lambda)\leq C_\mu r^\delta,\quad 
    \forall \, x_0\in \Lambda,r\in [0,1]\,.
\end{equation}
This $\delta$-regular condition holds for limit sets of general convex co-compact hyperbolic manifolds. 
What distinguishes $\Lambda$ here is that it is a \textit{quasi‑circle}; i.e., the image of the unit circle under a 
$K$-quasiconformal map: $\widehat{\mathbb{C}} \to \widehat{\mathbb{C}}$, where we identify $\mathbb{S}^2$ with 
$\widehat{\mathbb{C}}$ (see \cite[Chapter 1.1]{gehring2012ubiquitous} or \cite[Chapter 2]{Ahlfors2006Lectures} for  definitions).  
Consequently, $\Lambda$ is homeomorphic to $S^1$ and satisfies the following \textit{three-point condition}:
there exists a constant $C_{\operatorname{arc}}$ such that
for every pair of points $z_1,z_2\in \Lambda$
\begin{equation}\label{eq:three_point_condition}
    \min_{j=1,2} \operatorname{diam}(\gamma_j)\leq C_{\operatorname{arc}} |z_1-z_2|
\end{equation}
where $\gamma_1,\gamma_2$ are the two distinct arcs of $\Lambda$ joining $z_1$ and $z_2$; see \cite[Theorem 2.2.5]{gehring2012ubiquitous} and also Figure \ref{fig:koch-snowflake} for an example.

\begin{remark}
    A concrete value for $C_{\mathrm{arc}}$ can be given as $C_{\mathrm{arc}} = 10 e^{8K}$. 
    Indeed, the discussion after \cite[Remark 2.2.6]{gehring2012ubiquitous} shows that if $\Lambda \subset \mathbb{C}$ 
    is the image of $\mathbb{S}^1 \subset \mathbb{C}$ under a $K$-quasiconformal map $\widehat{\mathbb{C}} \to \widehat{\mathbb{C}}$ 
    fixing $\infty$, then \eqref{eq:three_point_condition} holds with $C_{\mathrm{arc}} = 2 e^{8K}$ for the Euclidean 
    distance $d_{\mathbb{R}^2}$.  The inequality on the sphere follows by applying the stereographic projection.
\end{remark}

\begin{figure}[ht]
	\centering
	\begin{tikzpicture}[decoration=Koch snowflake, scale=1.2]
	\draw[thick, blue] decorate{ decorate{ decorate{ decorate{ 
					(0,0) -- (3,0) -- (1.5,-2.6) -- cycle 
	}}}};
	\end{tikzpicture}
	\caption{
		The primary example in our analysis is the celebrated Koch snowflake. 
This set is \((\log_3 4)\)-regular and clearly satisfies the three-point condition.
	}
	\label{fig:koch-snowflake}
\end{figure}
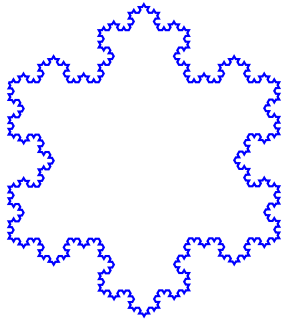

The main ingredient of this section is the following proposition.
\begin{proposition}\label{prop:Xis-line-porous}
    $X=\pi(\Lambda)\cap B(0,5)$ is  $\nu$-porous on lines from scales $\alpha_0=0$ to $\alpha_1>0$ 
     with 
\begin{equation}
\label{eq:nu-of-X}
\nu=\frac{1}{200C_{\mathrm{out}}\tilde{C}_{\operatorname{arc}}^{2}}\qquad \alpha_1=\frac{1}{10C_{\mathrm{out}}\tilde{C}_{\operatorname{arc}}}\,, 
\end{equation}
    where  $\tilde{C}_{\operatorname{arc}}$ is defined in \eqref{eq:tildeCmu and tildeCarc} and
    $C_{\mathrm{out}}$ is defined in \eqref{eq:definition-of-Cout}. 
  \end{proposition}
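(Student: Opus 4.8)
\textbf{Proof strategy for Proposition \ref{prop:Xis-line-porous}.}

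The plan is to exploit the \emph{three-point condition} \eqref{eq:three_point_condition} together with the $\delta$-regularity \eqref{eq:delta_regularity_of_Lambda} to show that every short line segment $\tau$ must contain a genuine gap of $X$. First I would transfer the relevant geometric constants from $\Lambda \subset \mathbb{S}^2$ to $X = \pi(\Lambda) \cap B(0,5) \subset \mathbb{R}^2$: since $\pi$ restricted to the region $\Sigma$ has bi-Lipschitz constants bounded by absolute constants on $B(0,5)$, the image $X$ inherits a three-point condition with a modified constant $\tilde C_{\operatorname{arc}}$ and a $\delta$-regularity condition with a modified constant, as recorded in \eqref{eq:tildeCmu and tildeCarc}; I would also invoke the non-concentration estimate (Lemma \ref{lem:non-concentration}, which yields $C_{\operatorname{out}}$) to control how much mass $X$ can place near any line.

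Next, fix a line segment $\tau$ of length $R \in (0, \alpha_1]$ with $\alpha_1$ as in \eqref{eq:nu-of-X}, and suppose for contradiction that $X$ meets the $\nu R$-neighborhood of every point of $\tau$, i.e.\ $X$ is ``$\nu$-dense along $\tau$'' at scale $R$. The key point is the following dichotomy forced by the three-point condition: if $z_1, z_2 \in X$ are two points at distance comparable to $R$ along $\tau$, then one of the two arcs of $\Lambda$ joining their preimages has diameter at most $C_{\operatorname{arc}}|z_1 - z_2| \lesssim C_{\operatorname{arc}} R$, so that arc is trapped in a ball of radius $\sim C_{\operatorname{arc}} R$. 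Running this over a chain of points spaced along $\tau$, the portion of $X$ shadowing $\tau$ is built out of arcs that, by the three-point condition, cannot stray far but also — being a Jordan curve — must eventually reconnect; quantitatively, this confines a positive-measure piece of $\mu$ to a thin tube around $\tau$ of width $\sim \nu R$ and length $\sim R$. Then the non-concentration bound from Lemma \ref{lem:non-concentration} gives $\mu(\text{tube}) \le C_{\operatorname{out}} (\nu R)^{\delta - 1} R^{\delta} \cdot (\text{something})$, while $\delta$-regularity from below gives $\mu(\text{tube}) \gtrsim R^\delta / C_\mu$; comparing the two and plugging in the explicit value $\nu = 1/(200 C_{\operatorname{out}} \tilde C_{\operatorname{arc}}^2)$ produces a contradiction. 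This forces the existence of $x \in \tau$ with $B_{\nu R}(x) \cap X = \emptyset$, which is exactly $\nu$-porosity on lines from scale $0$ to $\alpha_1$.

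I expect the main obstacle to be making the ``the Jordan curve must reconnect, hence a definite chunk of $\mu$-mass is trapped in the tube'' step fully quantitative: one has to argue that $X$ being $\nu$-dense along $\tau$ over the \emph{whole} segment (not just near its endpoints) forces an arc of $\Lambda$ of diameter $\gtrsim R$ to lie within the $\nu R$-tube, and then combine the three-point condition applied at many scales/positions with $\delta$-regularity to lower-bound the mass of that arc's image by $c\, R^\delta$. The bookkeeping of how the constants $C_\mu$, $\tilde C_{\operatorname{arc}}$, $C_{\operatorname{out}}$ enter — and checking that the stated $\alpha_1 = 1/(10 C_{\operatorname{out}} \tilde C_{\operatorname{arc}})$ is small enough for all the neighborhood and bi-Lipschitz estimates to be valid simultaneously — is the routine but delicate part. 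Once the tube-trapping is established, the contradiction is a one-line comparison of the upper bound $C_{\operatorname{out}} \tilde C_{\operatorname{arc}}^{2(\delta-1)} (\nu R)^{\delta-1} R$ against the lower bound $c R^\delta/C_\mu$, which closes precisely for $\nu$ of the stated size.
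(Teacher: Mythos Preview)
Your measure-comparison scheme does not close, and the paper takes a completely different, purely geometric route.

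The specific gap is the ``tube-trapping plus mass comparison'' step. First, the three-point condition controls the \emph{diameter} of the short arc between $z_i$ and $z_{i+1}$, not its distance to the line $L$; an arc of diameter $\tilde C_{\operatorname{arc}} s$ need not sit inside a tube of width $\nu R$ around $\tau$. Second, even granting that some arc of diameter $\sim R$ lies in the $\nu R$-tube, your lower bound $\mu(\text{tube})\gtrsim R^\delta/C_\mu$ is unjustified: $\delta$-regularity from below gives mass $\gtrsim R^\delta$ only for a full ball of radius $\sim R$, which is vastly wider than the tube. The honest lower bound, obtained by packing the arc with disjoint balls of radius $\sim\nu R$, is of order $\nu^{\delta-1}R^\delta$---the same order as the covering upper bound---so no contradiction emerges. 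Your own final comparison $C_{\mathrm{out}}\tilde C_{\operatorname{arc}}^{2(\delta-1)}(\nu R)^{\delta-1}R$ versus $cR^\delta/C_\mu$ reduces to $200^{\delta-1}C_{\mathrm{out}}^{\delta-2}\gtrsim C_\mu$, which fails for $\delta$ near $1$ and in general does not recover the stated $\nu$. Lemma~\ref{lem:non-concentration} produces a single \emph{point} of $\pi(\Lambda)$ away from $L$, not a measure upper bound on a tube, so invoking it as ``$\mu(\text{tube})\le C_{\mathrm{out}}(\nu R)^{\delta-1}R^\delta\cdot(\text{something})$'' is a misreading.

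What the paper actually does: it never compares masses. Assuming failure of $\nu$-porosity on $I$, it first uses Lemma~\ref{lem:non-concentration} at scale $\sim 1$ to fix a reference point $q\in\pi(\Lambda)$ far from $L$, then picks five points $q_1,\dots,q_5\in X$ near five well-separated points of $I$ and applies a Ramsey-type selection (Lemma~\ref{lem:ramsey}) to extract three of them, say $q_1,q_2,q_3$, in consistent cyclic order $q_1\to q_2\to q_3\to q\to q_1$ on the Jordan curve with $p_2$ between $p_1,p_3$ on $I$. A second application of Lemma~\ref{lem:non-concentration} at scale $\sim r/\tilde C_{\operatorname{arc}}$ produces $q_4$ near $q_2$ but at height $\gtrsim r/(\tilde C_{\operatorname{arc}}C_{\mathrm{out}})$ above $L$; the three-point condition forces $q_4\in[q_1,q_3]$. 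One then slides along the arc $[q_4,q_3]$ to its last return $q_5$ to the $2\nu r$-strip with projection left of $p_2$, and checks by three separate applications of the three-point condition (to the arcs through $q_4$, through $q_3$, and through $q$) that the ball of radius $2\nu r$ centered just left of the projection $p_5$ is disjoint from $\pi(\Lambda)$. The $\delta$-regularity enters only through Lemma~\ref{lem:non-concentration}; the contradiction is topological, not measure-theoretic.
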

     More precisely, the theorem states that for any line segment $I\subset \mathbb{R}^2$ 
    of length $r\in [0,\alpha_1]$, 
    there exists a ball $B$ with radius 
    $\nu r$ centering at a point on $I$, such that $B\cap X=\emptyset$. 
  
We first interpret the $\delta$-regularity and three-point conditions \eqref{eq:delta_regularity_of_Lambda} and 
\eqref{eq:three_point_condition}
on $\Lambda$ into local Euclidean versions on $\pi(\Lambda)$ (by abuse of  notation we still use the same notations as on the sphere)
\begin{equation}\label{eq:Euclidean-delta-regularity}
    \frac{1}{\tilde{C}_\mu}r^\delta\leq \mu(B_{\mathbb{R}^2}(x_0,r)\cap \pi(\Lambda))\leq \tilde{C}_\mu r^\delta,\quad 
    \forall\, x_0\in \pi(\Lambda)\cap B(0,10),r\in [0,1]\,,
\end{equation}
\begin{equation}\label{eq:Euclidean-three-point-condition}
    \min_{j=1,2} \operatorname{diam}_{\mathbb{R}^2}(\pi(\gamma_j)\cap B(0,10))\leq \tilde{C}_{\operatorname{arc}} |z_1-z_2| \quad \forall\, z_1,z_2\in \pi(\Lambda)\cap B(0,10)\,,
\end{equation}
where the new constant $\tilde{C}_\mu$ and $\tilde{C}_{\operatorname{arc}}$ 
can be taken as 
\begin{equation}
\label{eq:tildeCmu and tildeCarc}
    \tilde{C}_\mu=20000C_\mu,\quad   \tilde{C}_{\operatorname{arc}}=200C_{\operatorname{arc}}\,.
\end{equation}

We begin the proof with the following lemma, which implies that $X$ cannot concentrate on a small neighborhood of a line, due to the $\delta$-regularity with $\delta>1$.
\begin{lemma}\label{lem:non-concentration}
There exists a constant $C_{\mathrm{out}}\geq 1$ such that for any point $x\in X$, any line $L\subset \mathbb{R}^2$  and any $r\in(0,1/2)$, we can find a point $y\in B(x,r)\cap \pi(\Lambda)$ such that
\[
\operatorname{dist}(y,L)\geq r/C_{\mathrm{out}}\,.
\]
We can actually take 
\begin{equation}\label{eq:definition-of-Cout}
    C_{\mathrm{out}}=2(4\cdot 5^\delta\tilde{C}_\mu^2)^{\frac{1}{\delta-1}}\,,
\end{equation}
where $\delta=\dim_H \Lambda\in(1,2)$ is the regularity parameter of $\Lambda$.
\end{lemma}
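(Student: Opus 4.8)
The plan is to argue by contradiction using the lower Ahlfors--David regularity of $\Lambda$. Suppose that for some $x\in X$, some line $L$, and some $r\in(0,1/2)$, \emph{every} point $y\in B(x,r)\cap\pi(\Lambda)$ satisfies $\operatorname{dist}(y,L)<r/C_{\mathrm{out}}$; that is, $B(x,r)\cap\pi(\Lambda)$ is entirely contained in the slab $S=\{\,y:\operatorname{dist}(y,L)<r/C_{\mathrm{out}}\,\}$. The idea is that such a slab is so thin that it can be covered by few balls of a suitably chosen small radius, and then the upper and lower regularity bounds in \eqref{eq:Euclidean-delta-regularity} clash because $\delta>1$.

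First I would choose the covering scale. Cover the portion $B(x,r)\cap S$ by balls of radius $\lambda:=r/C_{\mathrm{out}}$ centered at points of $\pi(\Lambda)$: since the slab has width $2\lambda$ and length at most $2r$, one needs at most $N\lesssim r/\lambda = C_{\mathrm{out}}$ such balls (a constant times $C_{\mathrm{out}}$, which I would make precise via $N\le 4C_{\mathrm{out}}$ or similar by a crude packing count, using $\lambda<1/2<5$). Applying the upper bound in \eqref{eq:Euclidean-delta-regularity} to each covering ball gives
\[
\mu\bigl(B(x,r)\cap\pi(\Lambda)\bigr)\le N\,\tilde C_\mu\,\lambda^\delta \le 4C_{\mathrm{out}}\,\tilde C_\mu\,(r/C_{\mathrm{out}})^\delta = 4\,\tilde C_\mu\,C_{\mathrm{out}}^{\,1-\delta}\,r^\delta.
\]
On the other hand, since $x\in\pi(\Lambda)$ and $r<1$, the lower bound in \eqref{eq:Euclidean-delta-regularity} gives $\mu(B(x,r)\cap\pi(\Lambda))\ge \tilde C_\mu^{-1}r^\delta$. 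Combining the two inequalities forces $\tilde C_\mu^{-1}\le 4\tilde C_\mu\,C_{\mathrm{out}}^{1-\delta}$, i.e. $C_{\mathrm{out}}^{\delta-1}\le 4\tilde C_\mu^2$. But the choice \eqref{eq:definition-of-Cout}, $C_{\mathrm{out}}=2(4\cdot 5^\delta\tilde C_\mu^2)^{1/(\delta-1)}$, makes $C_{\mathrm{out}}^{\delta-1}=2^{\delta-1}\cdot 4\cdot 5^\delta\tilde C_\mu^2>4\tilde C_\mu^2$ (the extra factors $2^{\delta-1}5^\delta>1$ absorbing the crude packing constant $N/C_{\mathrm{out}}$), a contradiction. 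Hence the asserted point $y$ exists.

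The main obstacle is bookkeeping rather than conceptual: getting an honest, explicit packing count for the number $N$ of radius-$\lambda$ balls needed to cover the thin slab $B(x,r)\cap S$ — one must be careful that the balls are centered on $\pi(\Lambda)$ (so that \eqref{eq:Euclidean-delta-regularity} applies), which requires discarding balls that miss $\pi(\Lambda)$ and re-centering, and one must track how the geometric constant $5^\delta$ in \eqref{eq:definition-of-Cout} is exactly what swallows this packing constant together with the room needed for the strict inequality. A secondary point to check is that $r<1/2$ keeps all balls inside $B(0,10)$ (after noting $x\in B(0,5)$), so that the \emph{local} Euclidean regularity \eqref{eq:Euclidean-delta-regularity} is legitimately available; this is why the statement restricts to $r\in(0,1/2)$.
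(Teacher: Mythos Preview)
Your proposal is correct and takes essentially the same approach as the paper: both arguments compare the upper and lower $\delta$-regularity bounds \eqref{eq:Euclidean-delta-regularity} to show that $B(x,r)\cap\pi(\Lambda)$ cannot lie entirely in a thin slab around $L$, exploiting $\delta>1$. The only cosmetic difference is that the paper packages the covering step via a Vitali-type disjoint family $\{B(x_i,r')\}$ with $x_i\in\pi(\Lambda)$ (whence the $5^\delta$) and then counts disjoint intervals on $L$, which is exactly the ``re-centering on $\pi(\Lambda)$'' step you flagged as the main bookkeeping point.
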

\begin{proof}
    The argument here is inspired by
    \cite[Lemma 2.1]{mattila2009ahlfors}. 
    For each $r'\in (0,r]$, we can invoke the standard covering lemma (see e.g. \cite[Theorem 2.1]{Mattila1995}), to find a collection of disjoint balls     
 $\{B(x_i,r')\}_{1\le i\le m}$,  such that  for all $i$
 \[x_i\in B(x,r)\cap \pi(\Lambda)\subset \bigcup_i B(x_i,5r)\,. \]

    Using the $\delta$-regularity condition \eqref{eq:Euclidean-delta-regularity} and the relation
    \[
    \begin{aligned}
        &\sum_{i} \mu(B(x_i,r')\cap \pi(\Lambda))\leq 
        \mu(B(x,2r)\cap \pi(\Lambda))\\
        &\mu(B(x,r)\cap \pi(\Lambda))
        \leq \sum_{i} \mu(B(x_i,5r')\cap \pi(\Lambda))\,,
    \end{aligned}
    \]
    we know the number $m$ of these balls, is finite, and satisfies 
    \begin{equation}
   \label{eq:mlowerbound}
        \frac{1}{5^\delta \tilde{C}_\mu^2}\left(\frac{r}{r'}\right)^\delta \leq m\leq \tilde{C}_\mu^2\left(\frac{r}{r'}\right)^\delta \,.
    \end{equation}
    
    We \emph{claim} that for sufficiently small $r'$, there must be some $y=x_{i_0}$ whose distance to the line $L$ is at least $\frac{\sqrt{3}}{2}r'$.
    Actually, by contradiction, if every $x_i$ has distance at most $\frac{\sqrt{3}}{2}r'$ to the line $L$, then the intersection of $B(x_i,r')$ and $L$ is an open interval of $L$ with length at least $r'$, and we observe that all such intervals are disjoint, and are contained in $B(x,2r)$. 
    This forces the number $m$ of intervals to satisfy 
    \[
        mr'\leq 4r, 
    \]
and thereby  implies a contradiction with \eqref{eq:mlowerbound} if we take 
    \[
        r'=\frac{0.99r}{(4\cdot 5^\delta\tilde{C}_\mu^2)^{\frac{1}{\delta-1}}}\,. 
    \]
    This proves the lemma with 
    \[
        C_{\mathrm{out}}=2(4\cdot 5^\delta\tilde{C}_\mu^2)^{\frac{1}{\delta-1}}\,.
    \]
\end{proof}

We shall also make use of the following elementary lemma, which can be checked by calculating all cases explicitly. We omit the proof here.  
\begin{lemma}\label{lem:ramsey}
Let $a_{1},\dots,a_{5}$ be a permutation of $1,2,3,4,5$. Then there exists indices $i_{1}<i_{2}<i_{3}$ such that either $a_{i_{1}}<a_{i_{2}}<a_{i_{3}}$ or $a_{i_{1}}>a_{i_{2}}>a_{i_{3}}$.
\end{lemma}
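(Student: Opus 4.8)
The statement is precisely the $(3,3)$ instance of the Erd\H{o}s--Szekeres theorem, so rather than the exhaustive case check alluded to in the text, the plan is to run the standard pigeonhole/monovariant argument, which is short and leaves no room for error. First I would attach to each index $i\in\{1,\dots,5\}$ two integers: let $u_i$ be the length of the longest \emph{strictly increasing} subsequence of $a_1,\dots,a_5$ whose last term is $a_i$, and let $d_i$ be the length of the longest \emph{strictly decreasing} subsequence ending at $a_i$. Both are at least $1$.

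The key step is to observe that the assignment $i\mapsto(u_i,d_i)$ is injective on $\{1,\dots,5\}$. Indeed, if $i<j$ then, since $a_1,\dots,a_5$ is a permutation and hence the entries are distinct, either $a_i<a_j$ — in which case any optimal increasing subsequence ending at $i$ extends by appending $a_j$, so $u_j\ge u_i+1$ — or $a_i>a_j$, in which case similarly $d_j\ge d_i+1$; either way $(u_i,d_i)\ne(u_j,d_j)$. Now suppose toward a contradiction that there is no monotone subsequence of length $3$. Then $u_i\le 2$ and $d_i\le 2$ for every $i$, so the five pairs $(u_i,d_i)$ all lie in $\{1,2\}\times\{1,2\}$, a set of size $4$; injectivity then forces $5\le 4$, a contradiction. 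Hence some $u_{i}=3$ or some $d_i=3$, and unwinding the subsequence realizing this length yields indices $i_1<i_2<i_3$ with $a_{i_1}<a_{i_2}<a_{i_3}$ or $a_{i_1}>a_{i_2}>a_{i_3}$, as required.

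There is essentially no obstacle; the only points demanding a line of care are that a ``subsequence of length $3$'' literally supplies three indices in increasing order (immediate from the definition of subsequence), and that the dichotomy $a_i<a_j$ versus $a_i>a_j$ is exhaustive precisely because a permutation has no repeated values. If one instead prefers the elementary route hinted at in the paper, one can cut the $120$ permutations down using the symmetries $a_i\mapsto 6-a_i$ (which interchanges increasing and decreasing subsequences) and $i\mapsto 6-i$ (which reverses the sequence), reducing to a handful of cases on the relative order of $a_2,a_3,a_4$; but the pigeonhole proof above is cleaner and is what I would write.
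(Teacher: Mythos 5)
Your proof is correct and complete. The paper itself gives no argument for this lemma: it merely remarks that the statement ``can be checked by calculating all cases explicitly'' and omits the proof, so there is nothing to compare against step by step. What you have written is the standard Erd\H{o}s--Szekeres pigeonhole argument specialized to the $(3,3)$ case with $5 > 2\cdot 2$ terms: the map $i \mapsto (u_i, d_i)$ is indeed injective (your dichotomy $a_i < a_j$ versus $a_i > a_j$ is exhaustive because the $a_i$ are distinct, and in either case one coordinate strictly increases), and if no monotone subsequence of length $3$ existed, all five pairs would land in the four-element set $\{1,2\}^2$, a contradiction. This buys a short, verifiable argument in place of a $120$-case (or symmetry-reduced) enumeration, and it generalizes immediately to longer sequences; the paper's implicit brute-force check buys nothing except brevity of statement. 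No gaps.
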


Now we are ready to prove Proposition \ref{prop:Xis-line-porous}.
\begin{proof}[Proof of Proposition \ref{prop:Xis-line-porous}]

By contradiction, assume that the set $X$ \emph{fails} to be $\nu$-porous on some line segment 
$I \subset \mathbb{R}^2$ of length $r \in [0, \alpha_1]$, where $\nu$ and $\alpha_1$ 
are as given in \eqref{eq:nu-of-X}.

Let $L$ denote the line extending $I$. 
By Lemma~\ref{lem:non-concentration}, we may choose a point 
$q \in \pi(\Lambda) \cap B(0,6)$ such that 
\[
\operatorname{dist}(q, L) \geq \frac{1}{2C_{\text{out}}}.
\]

Since $X$ is not $\nu$-porous on $I$, there exist at least five distinct points 
$\{p_i\}_{1 \le i \le 5}$ in $I$ such that:
\begin{itemize}
    \item The pairwise distances satisfy 
    \[\operatorname{dist}(p_i, p_j) \ge \frac{r}{10}  \qquad \forall\, i \neq j,\]
    \item Each open ball $B(p_i, \nu r)$ contains a point $q_i \in X$.
\end{itemize}
Observe that each $q_i$ lies relatively close to $L$, whereas $q$ is much farther away.

Then the three-point condition \eqref{eq:Euclidean-three-point-condition} together with Lemma \ref{lem:ramsey} guarantees that, among the five points $\{p_i\}$, there exist \emph{three points} — still denoted by $p_1,p_2,p_3$ after discarding the other two points $p_4,p_5$ and the corresponding $q_4,q_5$ — such that:
\begin{itemize}
    \item For each $j=1,2,3$, there exists a point $q_j \in B(p_j,\nu r) \cap X$.
    \item The points $q_1,q_2,q_3,q$ are arranged on $\pi(\Lambda)$ in the cyclic order
    \begin{equation}\label{eq:order-of-p1p2p3q}
        q_{1} \to q_{2} \to q_{3} \to q \to q_{1},
    \end{equation}
    recalling that $\Lambda$ is homeomorphic to $S^1$.
    \item On the line segment $I$, the point $p_2$ lies between $p_1$ and $p_3$.
\end{itemize}
We now replace each $p_i$ by the orthogonal projection of $q_i$ onto $L$; this replacement does not alter the relative positions of $p_1,p_2,p_3$.

The role of the auxiliary point $q$ is to localize the arc condition, thereby controlling the behavior of the arc near $I$. In what follows, we denote by $[q_i,q_j]$ the arc of $\pi(\Lambda)$ connecting $q_i$ and $q_j$ that does \emph{not} pass through $q$. We also use the notation $q \to q_i$ or $q_i \to q$ for the oriented arc joining these two points, consistent with the cyclic order in \eqref{eq:order-of-p1p2p3q}.

Next, we apply Lemma~\ref{lem:non-concentration} to the line $L$ and the ball $B_{\frac{r}{15\tilde{C}_{\operatorname{arc}}}}(q_2)$ of radius $\frac{r}{15\tilde{C}_{\operatorname{arc}}}$ centered at $q_2$. 
Thus we obtain a point $q_4 \in \pi(\Lambda) \cap B_{\frac{r}{15\tilde{C}_{\operatorname{arc}}}}(q_2)$ whose distance to $L$ is at least $\frac{r}{15\tilde{C}_{\operatorname{arc}} C_{\mathrm{out}}}$, while $|q_2 - q_4| \leq \frac{r}{15\tilde{C}_{\operatorname{arc}}}$.

The three-point condition~\eqref{eq:Euclidean-three-point-condition} then forces $q_4$ to lie on the arc $[q_1, q_3]$. 
Indeed, if $q_4 \in [q, q_1]$ (or $q_4 \in [q_3, q]$), then the three-point condition would fail for the pair $(q_2, q_4)$, because the arc joining $q_2$ and $q_4$ would have to pass through either $q_1$ (or $q_3$) or $q$. 
To see this, observe that
\[
\begin{aligned}
|q_2 - q_1| &\geq |p_2 - p_1| - |q_2 - p_2| - |q_1 - p_1| \\
            &\geq \frac{r}{10} - 2\nu r \geq \frac{r}{15},\\[4pt]
|q_2 - q_3| &\geq \frac{r}{15} \qquad (\text{by the same argument as for } |q_2 - q_1|),\\[4pt]
|q_2 - q|   &\geq \operatorname{dist}(q,L) - \nu r \\
            &\geq \frac{1}{2C_{\mathrm{out}}} - \nu r \gg 100r .
\end{aligned}
\]

Hence, after possibly interchanging the roles of $q_1$ and $q_3$, we may assume $q_4 \in [q_1, q_2]$. We prescribe that along $I$, the point $p_1$ lies to the left of $p_2$, and $p_3$ lies to the right of $p_2$. 
 
Define $q_5$ to be the point in the set
\[
    [q_4, q_3] \cap \left\{q \in \mathbb{R}^2 : \operatorname{dist}(q, L) \leq 2\nu r\right\}
\]
whose orthogonal projection onto $L$ (denoted by $p_5$) lies between $p_1$ and $p_2$, and is \emph{farthest} from $p_2$.
We note that this set is nonempty and compact, containing $q_2$ by our choice of the stripe width $\nu$; moreover, $q_5$ may coincide with $q_2$. 
 
Applying the three-point condition \eqref{eq:Euclidean-three-point-condition} to the ordered triple $q_1 \to q_4 \to q_5$ yields 
\[
    \begin{aligned}
        |q_1 - q_5| &\geq \tilde{C}_{\operatorname{arc}}^{-1} |q_4 - q_1| 
                     \geq \frac{r}{20\tilde{C}_{\operatorname{arc}}} \,, \\
        |p_1 - p_5| &\geq |q_1 - q_5| - |q_5 - p_5| - |q_1 - p_1| \\
                    &\geq |q_1 - q_5| - \operatorname{dist}(q_5, L) - \nu r \\
                    &\geq \frac{r}{100\tilde{C}_{\operatorname{arc}}} \,.
    \end{aligned}
\]
Let $p_6$ be the point obtained by moving leftward from $p_5$ along $L$ a distance of $2\nu r$. 
The estimate above shows that along $I$, the points $p_1$, $p_6$, $p_5$, and $p_2$ appear in this order from left to right.
See Figure~\ref{fig:Illustration of our argument} for the configuration of all chosen points.

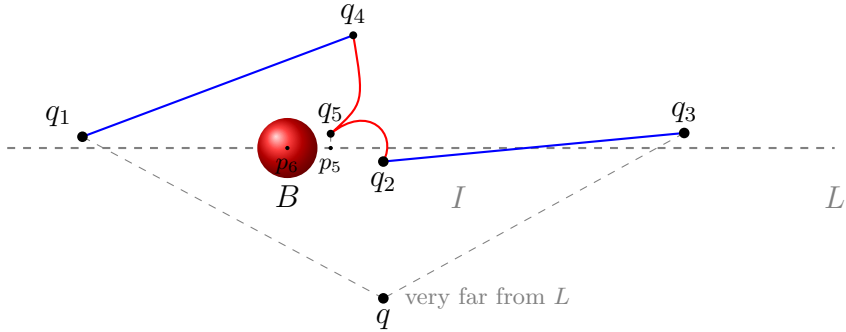
\begin{figure}[ht]
	\centering
	\begin{tikzpicture}
	\coordinate (p1) at (0,0.15);
	\coordinate (p2) at (4,-0.18);
	\coordinate (p3) at (8,0.2);
	
	\coordinate (baseLineLeft) at (-1,0);
	\coordinate (baseLineRight) at (10,0);
	
	\coordinate (q)  at (4,-2);
	\coordinate (p4) at (3.6, 1.5);
	
	
	\draw[dashed, gray, thick] node[below=10pt] at (5,0) {$I$} (baseLineLeft) -- (baseLineRight) node[below=10pt] {$L$};	
	
	\draw[blue, thick] (p1) -- (p4);
	\draw[blue, thick] (p2) -- (p3);
	\draw[gray, dashed] (p3) -- (q);
	\draw[gray, dashed] (q) -- (p1);
	
	
	\coordinate (c1) at (4.2, 0.2);   
	\coordinate (c2) at (3.8, 0.6);   
	\coordinate (c3) at (3.3, 0.19);   
	\coordinate (c5) at (3.75, 0.6);
	\coordinate (c4) at (3.4, 0.75);   
	
	\draw[red, thick, postaction={decorate}] 
		(p2) .. controls (c1) and (c2) .. 
		(c3) .. controls (c5) .. (p4);
	
	\coordinate (p5) at (c3);
	
	
	\coordinate (p5') at (p5 |- 0,0);
	
	\path let \p1 = ($(p4)-(p2)$), \n1 = {veclen(\x1,\y1)/3} in 
	coordinate (p6') at ($(p5')-(\n1,0)$);
	
	
	\shade[ball color=red] (p6') circle (0.4) node[below=10pt] {$B$};
	
	\draw[dashed, gray, thin] (p5) -- (p5');
	
	\fill (p1) circle (2pt);
	\node[above left, font=\normalsize] at (p1) {$q_1$};
	
	\fill (p2) circle (2pt);
	\node[below, font=\normalsize] at (p2) {$q_2$};
	
	\fill (p3) circle (2pt);
	\node[above, font=\normalsize] at (p3) {$q_3$};

	\fill (q) circle (2pt);
	\node[below, font=\normalsize] at (q) {$q$};
	 \node[right=4pt,font=\scriptsize] at (q) {\color{gray} very far from $L$};
	
	\fill (p4) circle (1.5pt) node[above] {$q_4$};
	
	\fill (p5) circle (1.5pt);   
	\node[above, font=\normalsize] at (p5) {$q_5$};
	
	\fill (p5') circle (0.8pt);   
	\node[below, font=\scriptsize] at (p5') {$p_5$};
	
	\fill (p6') circle (0.8pt);   
	\node[below, font=\scriptsize] at (p6') {$p_6$};
	\end{tikzpicture}
\caption{Illustration of our argument in the proof of Proposition~\ref{prop:Xis-line-porous}. The horizontal dashed gray line is $L$, extended from the segment $I$ passing roughly between $q_1$ and $q_3$. Each $q_i$ lies in $X$, while each $p_i$ lies on $I$. The point $q$ belongs to $X$ but is far from $L$. The red ball $B$, together with the points $q_1$, $q_2$, $q_3$, and $q_5$, are all $(\nu r)$-very close to $L$. The point $q_4$ is moderately close to $q_2$ but not moderately-very close to $L$. The point $p_5$ is the projection infimum of the arc $[q_4,q_3]$ and lies on the boundary of $B$, which is a hole in $X$.}
	\label{fig:Illustration of our argument}
\end{figure}

We \emph{claim} that the open 
ball $B$ centered at $p_6$ with radius 
$2\nu r$
does \emph{not} intersect $\pi(\Lambda)$, and hence does not intersect $X$ --- a contradiction. Indeed:\begin{itemize}
    \item For any $x \in B$, its projection onto $L$ lies to the left of $p_5$; 
    by construction of $p_5$, we have $[q_4, q_3] \cap B = \emptyset$.

     \item If $x \in B$ lies on the arc $q \to q_1 \to q_4$, then because $q$ is very far away, 
    $q_4$ must lie on the shorter arc $[x, q_5]$. However,
    \[
        |q_4 - q_5| \geq d(q_4, L) - d(q_5, L) 
        \geq \frac{r}{16 \tilde{C}_{\operatorname{arc}} C_{\mathrm{out}}}
    \]
    and
    \[
        |x - q_5| \leq |x - p_6| + |p_6 - p_5| + |p_5 - q_5| 
        \leq 6\nu r \leq \frac{r}{30 \tilde{C}_{\operatorname{arc}}^2 C_{\mathrm{out}}},
    \]
    contradicting the three-point condition.
\item If $x \in B$ lies on the arc $q_3 \to q$, then
    \[
        |q_5 - q_3| \geq |q_2 - q_3| - 4\nu r \geq \frac{r}{11}, \qquad
        |x - q_5| \leq 6\nu r \leq \frac{r}{30 \tilde{C}_{\operatorname{arc}}^2 C_{\mathrm{out}}},
    \]
    contradicting the three-point condition applied to $q_5 \to q_3 \to x$.
\end{itemize}

Thus, there exists a ball of radius $2\nu r$ centered at $p_6$ on $I$ that does not intersect $X$, yielding a contradiction. Therefore, $X$ must be $\nu$-porous on any line segment $I \subset \mathbb{R}^2$ of length $r \in [0, \alpha_1]$, which completes the proof.
\end{proof}

\subsection{Proof of Theorem \ref{thm:Spectral}} Following the argument in \cite[Theorem 3]{dyatlov2016spectral}, it suffices to prove the hyperbolic FUP stated in Proposition \ref{prop:hyperfup}.  
By applying stereographic projection, we reduce further to proving Proposition \ref{prop:hyperFUPEuclidean}—a Euclidean version of the hyperbolic FUP with the exponent $\beta$ given in \eqref{eq:beta-hyper}.   
The passage from the standard FUP \eqref{eq:fup} and its symmetric form \eqref{eq:fup-sym} to the hyperbolic FUP \eqref{eq:hyperfup} and its Euclidean counterpart \eqref{eq:hyperbolicFUP-euclidean} is now routine; see \cite[Section 4]{bourgain2018spectral} for the one-dimensional case and \cite[Section 4]{kim2025semiclassical} for higher dimensions.  
The essential idea is to extend the standard FUP to Fourier integral operators.  
For the reader's convenience, we list below the key lemmas in this reduction and refer to \cite{bourgain2018spectral,kim2025semiclassical} for complete details.

Recall the symmetric FUP \eqref{eq:fup-sym}.  
Let $X_+,X_-\subset B_{\mathbb{R}^2}(0,R)$ be two sets that are $\nu$-porous on lines at all scales from $0$ to $1$, where $R>0$ is a large constant.  Then the quantitative FUP gives 
\[
    \|\mathbf 1_{X_-(h)}\mathcal{F}_h\mathbf 1_{X_+(h)}\|_{L^2\to L^2}\leq Ch^\beta,
\]
where $\mathcal{F}_h$ is the semiclassical Fourier transform
\[
    \mathcal{F}_hf(\xi)=(2\pi h)^{-1}\int_{\mathbb{R}^2} e^{- i\,x\cdot\xi/h}f(x)\,dx,
\]
the exponent $\beta=\beta(\nu)$ is given explicitly, and constant $C$ depends only on $R$ and $\nu$. 

The first lemma is an FUP for operators with variable amplitude; it is essentially the same as \cite[Proposition 4.2]{bourgain2018spectral} and \cite[Lemma 4.13]{kim2025semiclassical} (and the proof follows similarly). Consider operators \(A = A_a(h) : L^2(\mathbb{R}^2) \to L^2(\mathbb{R}^2)\) of the form
\[
A_a(h)f(x) =(2\pi h)^{-1} \int_{\mathbb{R}^2} e^{- i x \cdot \xi / h} \, a(x, \xi) \, f(\xi) \, d\xi,
\]
where \(a(x, \xi) \in C_c^\infty(\mathbb{R}^2)\) satisfies
\[
\sup | \partial_x^k a | \leq C_{k,a}, \qquad \operatorname{diam}\bigl(\supp a \bigr) \le C_a
\]
for each multi-index \(k\) and for some constants \(C_{k,a}, C_a\).
\bigskip 
\begin{lemma}
Let $\beta=\beta(\nu)$ and $C_1>0$.
    For each $\rho \in (1/2,1]$, there exists a constant $C$ depending only on $C_{k,a},C_a,R,\nu,\rho$ and $C_1$ 
such that for all $h$
\[
\|\mathbf 1_{X_-(C_1h^\rho)} A(h) \mathbf 1_{X_+(C_1h^\rho)} \|_{L^2(\mathbb R^2) \to L^2(\mathbb R^2)} \leq Ch^{\beta - 2(1 - \rho)}\,.
\]

\end{lemma}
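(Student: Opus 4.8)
The plan is to reduce the variable-amplitude operator $A_a(h)$ to the constant-amplitude case, i.e.\ to the semiclassical Fourier transform $\mathcal F_h$, via a dyadic (partition-of-unity) decomposition in frequency at scale $h^\rho$, exploiting the fact that over a box of side $h^\rho$ the amplitude $a(x,\xi)$ is essentially constant in $\xi$ up to an error controlled by $h^\rho$ times its derivatives. The starting point is the symmetric FUP
\[
\|\mathbf 1_{X_-(h)}\mathcal F_h\mathbf 1_{X_+(h)}\|_{L^2\to L^2}\le C h^{\beta},
\]
valid for sets $X_\pm$ that are $\nu$-porous on lines at all scales from $0$ to $1$ and contained in a fixed ball.

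First I would rescale: the cutoffs $\mathbf 1_{X_\pm(C_1 h^\rho)}$ live at the coarser scale $h^\rho$ rather than $h$, so after a dilation $x\mapsto h^{1-\rho}x$, $\xi\mapsto h^{1-\rho}\xi$ the operator $\mathcal F_h$ becomes $\mathcal F_{h^{2\rho-1}}$ (the new effective semiclassical parameter is $h' = h^{2\rho-1}$, which tends to $0$ precisely because $\rho>1/2$), and the neighborhoods $X_\pm(C_1 h^\rho)$ become $(C_1 h')$-neighborhoods of dilated porous sets. Porosity on lines is scale-invariant in the right way, so the dilated sets $X_\pm^{h} := h^{\rho-1} X_\pm$ are still $\nu$-porous on lines from scale $0$ up to a fixed scale, and they sit in a ball of radius $R h^{\rho-1}$ — which grows, so one must either invoke the almost-orthogonality extension mentioned after Theorem~\ref{thm:qfup} (following Dyatlov--Jin--Nonnenmacher \cite{MR4374954}) to remove the boundedness restriction, or decompose into $O(h^{-2(1-\rho)\cdot 2})$ unit boxes and sum — the latter is the source of the $h^{-2(1-\rho)}$ loss in the exponent (roughly, the operator norm picks up a factor from summing boxes, or equivalently from the non-unitarity of the rescaled frequency cutoff). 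Applying the symmetric FUP at parameter $h'$ gives $\|\cdot\|\le C (h')^{\beta} = C h^{\beta(2\rho-1)}$; combined with the box-counting loss this yields $C h^{\beta - 2(1-\rho)}$ after bounding $\beta(2\rho-1)\ge \beta - 2(1-\rho)\beta \ge \beta - 2(1-\rho)$ crudely (using $\beta\le 1$).

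To handle the variable amplitude, I would insert a partition of unity $1 = \sum_j \chi_j(\xi)$ subordinate to a cover of $\supp a$ by balls $B_j$ of radius $h^\rho$; on $B_j$ write $a(x,\xi) = a(x,\xi_j) + (a(x,\xi)-a(x,\xi_j))$. For the second piece, integration by parts in $x$ (the operator is still an oscillatory integral with non-stationary phase once localized, so repeated integration by parts against $e^{-ix\cdot\xi/h}$ gains powers of $h/(\text{size of }\xi\text{-support}) = h^{1-\rho}$) shows the contribution of $a(x,\xi)-a(x,\xi_j)$ is negligible — $O(h^N)$ for any $N$ after enough integrations, using $\sup|\partial_x^k a|\le C_{k,a}$ and $|a(x,\xi)-a(x,\xi_j)|\lesssim h^\rho$. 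For the main piece $a(x,\xi_j)\chi_j(\xi)$, since the $\xi$-factor is just a smooth bump and the $x$-factor $a(x,\xi_j)$ is a bounded multiplier (bounded together with derivatives uniformly), the operator $\mathbf 1_{X_-(C_1h^\rho)} A_{a(\cdot,\xi_j)\chi_j}(h)\mathbf 1_{X_+(C_1h^\rho)}$ is comparable to $\mathbf 1_{X_-(C_1h^\rho)}\mathcal F_h \mathbf 1_{B_j'}$ composed with bounded operators, where $B_j'$ is a slight enlargement of $B_j$; one then absorbs $\mathbf 1_{B_j}$ into an enlarged porous set (a single $h^\rho$-ball is trivially $\nu$-porous on lines at scales below its diameter, and above that scale $X_+(C_1h^\rho)\cap B_j$ inherits porosity from $X_+$) and applies the rescaled FUP. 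Finally, one sums over $j$: the $\chi_j$ are almost orthogonal in $\xi$, and a Cotlar--Stein or direct $\ell^2$ argument (as in \cite[Proposition 4.2]{bourgain2018spectral}) shows the sum of the pieces obeys the same bound up to a constant.

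The main obstacle is the bookkeeping of the two competing scale-changes: the amplitude localization wants $\xi$-boxes of size $h^\rho$ (to make $a$ essentially constant with negligible integration-by-parts error), while the FUP is naturally stated at scale $h$, and reconciling these is exactly what produces the $h^{-2(1-\rho)}$ deficit; one must check that this loss is no worse than claimed and, in particular, that the constant $C$ depends only on $C_{k,a}, C_a, R, \nu, \rho, C_1$ and not on $h$. A secondary technical point is ensuring that enlarging $X_\pm(C_1 h^\rho)$ to absorb the frequency bumps $\chi_j$ does not destroy the porosity-on-lines hypothesis at the scales that matter; this is where Lemma~\ref{lem:porous_transf}-type stability of porosity under taking neighborhoods, already used in Section~\ref{sec:qfup}, is invoked. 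Since the excerpt explicitly says the proof follows \cite[Proposition 4.2]{bourgain2018spectral} and \cite[Lemma 4.13]{kim2025semiclassical}, I would keep the exposition brief and refer the reader there for the routine oscillatory-integral estimates, spelling out only the scaling computation that pins down the exponent $\beta - 2(1-\rho)$.
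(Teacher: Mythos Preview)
Your proposal has the right overall architecture---reduce the variable amplitude to the constant case, then apply the basic FUP with a controlled loss---but the mechanism you propose for the amplitude reduction does not work. You partition in $\xi$ at scale $h^\rho$, freeze $a(x,\xi)\approx a(x,\xi_j)$, and claim the remainder is $O(h^N)$ via ``integration by parts in $x$.'' But there is no $x$-integral in $A_a(h)$: $x$ is the output variable. Passing to $T^*T$ does introduce an $x$-integral, yet the phase is then $x\cdot(\xi-\xi')/h$ with $|\xi-\xi'|\lesssim h^\rho$, which is stationary at $\xi=\xi'$ and yields no smallness; the remainder is $O(h^\rho)$ relative to the main term, not $O(h^N)$. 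More tellingly, the constant in the lemma depends only on $\sup|\partial_x^k a|$ and $\operatorname{diam}(\supp a)$, \emph{not} on $\partial_\xi a$, so an argument that Taylor-expands $a$ in $\xi$ cannot give the stated dependence. And after localizing the input to a $\xi$-ball $B_j$ of radius $h^\rho$ you cannot ``apply the rescaled FUP'': the set $B_j\cap X_+(C_1h^\rho)$ carries no porosity whatsoever at scales between $h$ and $h^\rho$, which is precisely the range the FUP exploits.

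The argument of \cite[Proposition 4.2]{bourgain2018spectral} and \cite[Lemma 4.13]{kim2025semiclassical} instead expands $a$ in Fourier series in the $x$-variable: $a(x,\xi)=\sum_k \hat a_k(\xi)\,e^{i\alpha k\cdot x}$ on a cube containing the $x$-support. The factor $e^{i\alpha k\cdot x}$ merges with $e^{-ix\cdot\xi/h}$ to produce a $\xi$-translation by $\alpha h k$; for $|k|\lesssim h^{\rho-1}$ this shift is $O(h^\rho)$ and is absorbed into an enlarged neighborhood $X_+(2C_1h^\rho)$, while the rapid decay of $\hat a_k$ in $k$---this is exactly where the $\partial_x^k a$ hypotheses enter---kills the tail. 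One is left with amplitudes $a=a(\xi)$, a bounded multiplier on the input side. The remaining estimate for $\mathcal F_h$ with $h^\rho$-fattened indicators is then obtained by covering $X_\pm(C_1h^\rho)$ with $N\sim (h^\rho/h)^2=h^{-2(1-\rho)}$ translates of $X_\pm(h)$ and combining via disjointness and Cauchy--Schwarz, losing exactly a factor $N$; this is what the remark following the lemma is pointing to. Your alternative rescaling route for this last step (with $h'=h^{2\rho-1}$ and the almost-orthogonality extension to unbounded sets) can be made to work, but the extra box-counting loss you add on top is spurious: rescaling plus the unbounded FUP already gives $(h')^\beta=h^{\beta(2\rho-1)}\le h^{\beta-2(1-\rho)}$ directly.
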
 
\begin{remark} 
The loss of exponent $2(1-\rho)$ is due to the fact that we need to use $h^{\rho-1}$ number of sets of scale $h$ to cover $X_{\pm}(C_1h^\rho)$.
\end{remark}

Next we need to consider the Fourier integral operator with general phase. 
Let \(B=B_{\Phi, \chi}(h):L^{2}(\mathbb{R}^{2})\to L^{2}(\mathbb{R}^{2})\) be of the form
\[
B_{\Phi, \chi}(h)f(x)=(2\pi h)^{-1}\int_{\mathbb{R}^{2}}e^{i\Phi(x,x')/h}\chi(x,x')f(x')\,dx',
\]
where we assume
\[
     \Phi(x,x')\in C^{\infty}(B(0,R);\mathbb{R}),\quad \chi\in C_{c}^{\infty}(B(0,R)),\quad\text{det }(\partial_{xx'}^{2}\Phi)\neq 0\text{ on } B(0,R)\,.
\]
Note that we use notation
\[\partial^2_{xx'}\Phi=
\left(
\begin{array}{cc}
 \partial^2_{x_1x'_1} \Phi &    \partial^2_{x_1x'_2} \Phi \\
\partial^2_{x_2x'_1} \Phi &   \partial^2_{x_2x'_2} \Phi  \\ 
\end{array}
\right)\,.\]

Then we have the following FUP on the operator $B$, which is essentially the same as \cite[Proposition 4.3]{bourgain2018spectral} and \cite[Proposition 4.14]{kim2025semiclassical}. The proof 
is exactly the same as \cite[Proposition 4.14]{kim2025semiclassical}.

\begin{proposition}
\label{prop:FUP for general FIO,Kim's version}
    Suppose there exists a constant $C_2\geq 1$ and a neighborhood $U$ of $\operatorname{supp} b$
    such that
    \begin{equation}\label{eq:condition on FUP for FIO,kim's version}
        \max(\|\partial_{xx'}^2\Phi\|,\|(\partial_{xx'}^2\Phi)^{-1}\|)\leq C_2  \quad \text{in} \  U
    \end{equation}
    where $\|\cdot\|$ is the operator norm of an $2\times 2$ matrix under $\ell^2$ norm in $\mathbb R^2$.
    Then
    for $\rho\in(3/4,1)$ and 
    \[
        \beta=\beta\left(\frac{\nu}{2C_2^2}\right)
    \]
    there exists a constant $C>0$ independent of $h$, 
    such that for all $h$
\[
\|\mathbf 1_{X_{-}(C_1h^{\rho})}B(h)\mathbf 1_{X_{+}(C_1h^{\rho})}\|_{L^{2}(\mathbb{R}^{2})\to L^{2}(\mathds{R}^{2})}\leq Ch^{\beta/2-2(1-\rho)}.
\]
\end{proposition}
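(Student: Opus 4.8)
The statement is a routine (if technical) transfer of the scalar FUP \eqref{eq:fup-sym} to Fourier integral operators, exactly along the lines of \cite[Section 4]{bourgain2018spectral} for $d=1$ and \cite[Section 4, Proposition 4.14]{kim2025semiclassical} in general dimension; the plan is to reproduce that argument, keeping track only of how the constant $C_2$ enters. First, by a partition of unity $\chi=\sum_j\chi_j$ into pieces supported on small cubes $Q_j\times Q_j'$ (of a size that is a suitably chosen small power of $h$), together with an almost-orthogonality argument exploiting that the ranges of the pieces $\mathbf 1_{X_-(C_1h^\rho)}B_{\Phi,\chi_j}\mathbf 1_{X_+(C_1h^\rho)}$ lie in the finitely overlapping sets $\overline{Q_j}\cap X_-(C_1h^\rho)$ (and likewise the domains lie in $\overline{Q_j'}\cap X_+(C_1h^\rho)$), it suffices to estimate a single piece; the number of contributing cubes is controlled by the porosity — hence the sub-maximal box dimension — of $X_\pm$, which is what prevents this step from being lossy.

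On a single cube centred at $(x_0,x_0')$, I would Taylor-expand $\Phi$: the constant term, the terms linear in $x$ alone and in $x'$ alone, and the pure quadratic terms $\tfrac12(x-x_0)^{T}\partial^2_{xx}\Phi\,(x-x_0)$, $\tfrac12(x'-x_0')^{T}\partial^2_{x'x'}\Phi\,(x'-x_0')$ can be written as $e^{i\phi_-(x)/h}$ and $e^{i\phi_+(x')/h}$; since multiplication by unimodular functions is unitary on $L^2$ and commutes with the cutoffs $\mathbf 1_{X_\pm(C_1h^\rho)}$, these do not affect the norm being bounded. What remains is the mixed quadratic term $(x-x_0)^{T}M\,(x'-x_0')$ with $M=\partial^2_{xx'}\Phi(x_0,x_0')$, for which $\|M\|,\|M^{-1}\|\le C_2$ by \eqref{eq:condition on FUP for FIO,kim's version}, together with the cubic remainder. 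Straightening the mixed term by the affine change of variables $\xi=M(x'-x_0')$ turns the leading part of the operator into one of the form $A_a(h)$ treated in the preceding Lemma, the surviving cubic part of the phase being dealt with by the rescaling described below. The cost of the affine change of variables is that $X_+$ gets replaced by its image under a map whose linear part and inverse have operator norm $\le C_2$; by the porosity lemmas of Appendix \ref{sec:appA} such an affine image of a $\nu$-porous-on-lines set is $(\nu/C_2^2)$-porous on lines, while $X_-$ stays $\nu$-porous and hence is also $(\nu/C_2^2)$-porous — this is the source of the factor $2C_2^2$, the extra $2$ absorbing the truncations between scales.

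Applying the variable-amplitude FUP of the preceding Lemma to this single piece with common porosity parameter $\nu/(2C_2^2)$ then gives a bound $h^{\beta(\nu/(2C_2^2))-2(1-\rho)}$ up to constants; the restriction $\rho\in(3/4,1)$ and the halving to $\beta/2$ are precisely the slack needed to absorb (i) the higher-order part of the phase — handled, as in \cite{bourgain2018spectral,kim2025semiclassical}, by rescaling the cube of side $\delta$ to unit size, which replaces $h$ by the coarser parameter $h'=h/\delta^2$, the neighbourhood exponent $\rho$ by $\rho'=(\rho-a)/(1-2a)$ (with $\delta=h^a$), and the remainder by a $C^\infty$-small perturbation of the Fourier phase at scale $h'$ — and (ii) the $h^{-O(a)}$ book-keeping loss from the partition of unity, which is itself tempered by the porosity of $X_\pm$. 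One checks $\rho'\in(1/2,1]$ for $a<1-\rho$ and $a$ small, and since $\beta(\cdot)$ in \eqref{eq:beta} is increasing one may replace $\nu/C_2^2$ by $\nu/(2C_2^2)$ and the per-cube exponent $(1-2a)\beta$ by $\tfrac12\beta$ after choosing $a=a(\nu,C_2,\rho)$ small enough; summing over cubes then yields $\|\mathbf 1_{X_-(C_1h^\rho)}B(h)\mathbf 1_{X_+(C_1h^\rho)}\|\le Ch^{\beta/2-2(1-\rho)}$.

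The main obstacle is exactly this last point: unlike $\mathcal F_h$, the operator $B$ carries a genuinely non-linear phase $\Phi$, and its cubic Taylor remainder is not negligible against $h$ on a cube of fixed size, so one is forced to rescale, passing to the coarser semiclassical parameter $h/\delta^2$; the delicate balancing is to take the cube-size exponent $a$ small enough that the $h^{-O(a)}$ loss coming from the many cubes (bounded using the box dimension of the porous sets $X_\pm$) does not swamp the tiny but positive gain $\beta$, yet to keep the rescaled neighbourhood exponent $(\rho-a)/(1-2a)$ inside the range $(1/2,1]$ where the variable-amplitude FUP established above is available. That this balancing can be carried out, at the modest cost $\nu\mapsto\nu/(2C_2^2)$, $\beta\mapsto\beta/2$, $\rho\in(3/4,1)$, is precisely the content of \cite[Proposition 4.14]{kim2025semiclassical}, whose proof we follow verbatim.
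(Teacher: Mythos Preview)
Your proposal is correct and matches the paper's approach: the paper itself gives no proof beyond citing \cite[Proposition~4.14]{kim2025semiclassical}, and you have accurately sketched that argument, including the partition into cubes of side $h^a$, the Taylor expansion of $\Phi$, and the rescaling $h\mapsto h/\delta^2$ that forces $\rho>3/4$ and costs the halving $\beta\to\beta/2$. The one point worth noting is that the porosity degradation $\nu\mapsto\nu/C_2^2$ under the affine change $x'\mapsto \partial^2_{xx'}\Phi(x_0,x_0')\,(x'-x_0')$ is not actually contained in the lemmas of Appendix~\ref{sec:appA} (those treat only dilations, translations, and neighbourhoods), but it is an elementary direct check: a segment of length $R$ pulls back to one of length $\ge R/C_2$, porosity there gives an empty ball of radius $\ge \nu R/C_2$, and pushing forward contracts it by at most $C_2$.
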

\begin{remark}
According to the comment after 
\cite[Proposition 4.14]{kim2025semiclassical}, 
the exponent $\beta$ depends only on $\nu$ and is independent of $\Phi$
in the 1-dimensional version of Proposition \ref{prop:FUP for general FIO,Kim's version}, as proved in \cite[Proposition 4.3]{bourgain2018spectral}. In higher dimensional case, since the transform under $\partial_x'\Phi$ will change the porous constant $\nu$, 
the exponent $\beta$ depends on the phase function $\Phi$.

However, we can not directly apply Proposition \ref{prop:FUP for general FIO,Kim's version} to  prove \eqref{eq:hyperbolicFUP-euclidean}, since for the specific 
$\Phi$ in \eqref{eq:B_b}, the norm of $\partial^2_{zw}\Phi$ is very large near the diagonal $z=w$. Fortunately, we can use the scaling trick used in the 
proof of \cite[Proposition 4.3]{bourgain2018spectral} to obtain a relatively uniform exponent $\beta$, with weaker restriction on phase.
\end{remark}
\begin{proposition}
\label{prop:FUP_FIO_our_version} 
Suppose there exists a constant $C_3\geq 1$ and a neighborhood $U$ of $\operatorname{supp} b$
    such that 
    \begin{equation}
    \label{eq:PhaseCond2}
        \frac{s_{\max}(\partial^2_{xx'}\Phi)}{s_{\min}(\partial^2_{xx'}\Phi)}\leq C_3 \quad \text{in} \ U
    \end{equation}
    where $s_{\max}$ and $s_{\min}$ are the largest and the smallest 
    singular values. Then for  $\rho\in(3/4,1)$ and 
 \[ \beta=\beta\left(\frac{\nu}{4C_3}\right)
    \]
    there exists a constant $C>0$ independent of $h$, such that for all $h$
\[
\|\mathbf 1_{X_{-}(C_1h^{\rho})}B(h)\mathbf 1_{X_{+}(C_1h^{\rho})}\|_{L^{2}(\mathbb{R}^{2})\to L^{2}(\mathds{R}^{2})}\leq Ch^{\beta/2-2(1-\rho)}.
\]
\end{proposition}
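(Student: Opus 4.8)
The plan is to reduce Proposition~\ref{prop:FUP_FIO_our_version} to Proposition~\ref{prop:FUP for general FIO,Kim's version} by a rescaling argument, following the scaling trick in the proof of \cite[Proposition~4.3]{bourgain2018spectral}. The two-sided bound \eqref{eq:condition on FUP for FIO,kim's version} fails for our phase only because $\|\partial^2_{xx'}\Phi\|=s_{\max}(\partial^2_{xx'}\Phi)$ may be large, whereas \eqref{eq:PhaseCond2} controls only the anisotropy $s_{\max}/s_{\min}$. The observation is that a linear change of variables in the physical variables multiplies the mixed Hessian by the two coordinate-change matrices, so by choosing these (locally, where the Hessian is nearly constant) to be a rotation composed with the appropriate isotropic dilation, one can normalize the \emph{size} of the Hessian without altering its anisotropy, while the changes of variables act on the porous sets as rotations and isotropic dilations, which preserve porosity-on-lines \emph{exactly}. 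After this normalization the phase satisfies \eqref{eq:condition on FUP for FIO,kim's version} with a $C_2$ comparable to $\sqrt{C_3}$, and Proposition~\ref{prop:FUP for general FIO,Kim's version} applies.

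In more detail: decompose $\chi=\sum_j\chi_j$ with each $\chi_j$ supported in a product of cubes $Q_j\times Q'_j$ of side $\epsilon$, where $C_1h^\rho\le\epsilon\ll h^{1/3}$ — a nonempty window since $\rho>3/4>1/3$ — and by almost orthogonality (the families $\{Q_j\}$ and $\{Q'_j\}$ have bounded overlap) reduce to estimating $\mathbf 1_{X_-(C_1h^\rho)}B_j\mathbf 1_{X_+(C_1h^\rho)}$ for each piece $B_j$ with amplitude $\chi_j$. On the $j$-th piece, let $M_j=(\partial^2_{xx'}\Phi)(x_j,x'_j)$ with singular value decomposition $M_j=U_j\,\operatorname{diag}(s_{\max,j},s_{\min,j})\,V_j^{T}$, and change variables $x=x_j+c_jU_j\tilde x$, $x'=x'_j+c_jV_j\tilde x'$ with $c_j=(s_{\max,j}s_{\min,j})^{-1/4}$. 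Taylor-expand $\Phi$ at $(x_j,x'_j)$: the constant term drops, the terms linear or purely quadratic in $\tilde x$ alone or $\tilde x'$ alone are modulations and chirps that may be moved onto the two $L^2$ functions without changing norms or the transformed sets, and the cubic and higher terms contribute a phase of size $O(\epsilon^3/h)=o(1)$, with derivatives also $O(1)$, hence absorbable into a smooth amplitude. What remains is the mixed-quadratic term, which becomes $\tfrac1h\,\tilde x\cdot\operatorname{diag}\!\big((s_{\max,j}/s_{\min,j})^{1/2},(s_{\min,j}/s_{\max,j})^{1/2}\big)\cdot\tilde x'$; by \eqref{eq:PhaseCond2}, this Hessian has operator norm and inverse operator norm at most $\sqrt{C_3}$.

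Since the nondegeneracy assumption $\det\partial^2_{xx'}\Phi\ne0$ forces $s_{\min}$ (hence the $c_j$) to be bounded above and below on a neighborhood of $\operatorname{supp}\chi$, the rescaled supports lie in a ball $B(0,R')$ of $h$-independent radius, so after a harmless final isotropic rescaling back into $B(0,R)$ each piece is an operator of the form covered by Proposition~\ref{prop:FUP for general FIO,Kim's version}, with a phase satisfying \eqref{eq:condition on FUP for FIO,kim's version} for $C_2=\sqrt{2C_3}$ — the extra $\sqrt2$ absorbing the $O(\epsilon)$ variation of the Hessian over $Q_j\times Q'_j$ — and with porous sets still $\nu$-porous on lines, because $c_jU_j$ and $c_jV_j$ (and the final rescaling) are rotations composed with isotropic dilations. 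Proposition~\ref{prop:FUP for general FIO,Kim's version} then yields $\|\mathbf 1_{X_-(C_1h^\rho)}B_j\mathbf 1_{X_+(C_1h^\rho)}\|\le Ch^{\beta(\nu/(2C_2^2))/2-2(1-\rho)}=Ch^{\beta(\nu/(4C_3))/2-2(1-\rho)}$ for each $j$, and almost orthogonality upgrades this to the same bound for $B$.

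The main obstacle is the bookkeeping in the reduction rather than any new idea: one must fix $\epsilon$ and keep simultaneous track of (i) $\epsilon\ge C_1h^\rho$, so the fattened sets $X_\pm(C_1h^\rho)$ are compatible with the cube decomposition; (ii) $\epsilon\ll h^{1/3}$, so the cubic Taylor remainder is a genuinely negligible amplitude; (iii) the effect of the successive isotropic and orthogonal changes of variables on the fattening scale and on the effective semiclassical parameter of the model operator; and (iv) the almost-orthogonality estimate (Cotlar--Stein type) for the partition-of-unity pieces. All of this is exactly what is carried out in the proof of \cite[Proposition~4.3]{bourgain2018spectral}, see also \cite[Proposition~4.14]{kim2025semiclassical}; the only new point is that the argument there uses only the anisotropy of the Hessian, so the two-sided bound \eqref{eq:condition on FUP for FIO,kim's version} can be replaced by the condition-number bound \eqref{eq:PhaseCond2} — which for our $\Phi$ in \eqref{eq:B_b} is satisfied with $C_3$ of size $O(1)$ uniformly, since $\partial^2_{xx'}\Phi$ is a scalar multiple of a reflection and hence has condition number one.
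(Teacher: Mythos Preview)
Your proposal is correct, but the paper takes a much shorter route. On each small ball $B_0\subset U$ centered at $(x_0,x'_0)$, the paper does \emph{not} change physical variables at all: it merely rescales the phase, setting $\tilde\Phi=\lambda^{-1}\Phi$ with $\lambda=\sqrt{s_{\max}(x_0,x'_0)\,s_{\min}(x_0,x'_0)}$, which is equivalent to replacing $h$ by $\tilde h=\lambda^{-1}h$ in the operator (so $B(h)=\lambda^{-1}\tilde B(\tilde h)$). The singular values of $\partial^2_{xx'}\tilde\Phi$ then lie in $[(2C_3)^{-1/2},(2C_3)^{1/2}]$ on $B_0$, so \eqref{eq:condition on FUP for FIO,kim's version} holds with $C_2=\sqrt{2C_3}$, and Proposition~\ref{prop:FUP for general FIO,Kim's version} applies directly with the \emph{original} porous sets $X_\pm$ and the original amplitude. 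A finite partition of unity finishes the proof in a few lines; the $\chi$-dependent bounds on $\lambda$ enter only the constant $C$, not $\beta$.

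Your approach instead performs a physical-space normalization---rotations and isotropic dilations on each side via the SVD of the Hessian---then Taylor expands, strips off chirps, and absorbs the cubic remainder into the amplitude under the window $C_1h^\rho\le\epsilon\ll h^{1/3}$. This is essentially re-running the proof of Proposition~\ref{prop:FUP for general FIO,Kim's version} (equivalently \cite[Proposition~4.3]{bourgain2018spectral}) inside the proof of Proposition~\ref{prop:FUP_FIO_our_version}, when in fact it suffices to invoke that proposition as a black box after a single scalar rescaling of the phase. Both routes arrive at $C_2=\sqrt{2C_3}$ and hence $\beta(\nu/(4C_3))$, but the paper's version avoids the Taylor bookkeeping entirely.
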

\begin{proof}
 We first cover the support by  finite small balls.  For each $(x_0,x'_0)\in \operatorname{supp} b$, we can find a small neighborhood ball $B_0=B(x_0,x'_0)\subset \mathbb{R}^4$ such that, for all $(x,x')\in B_0$, we have 
   \[
        M/\sqrt 2<{s_{\max}(\partial^2_{xx'}\Phi(x,x'))}<\sqrt 2M,\quad 
         m/\sqrt 2<{s_{\min}(\partial^2_{xx'}\Phi(x,x'))}<\sqrt 2m
    \]
where $M:=s_{\max}(\partial^2_{xx'}\Phi(x_0,x_0'))$ and $m:=s_{\min}(\partial^2_{xx'}\Phi(x_0,x_0'))$.    
Let 
    $\lambda:=\sqrt{Mm}\ge c_\Phi>0$. Now, we consider the new phase function 
    $\tilde{\Phi}$ defined as $\tilde{\Phi}=\lambda^{-1} \Phi$. We have $B(h)=\lambda^{-1}\tilde{B}(\lambda^{-1}h)$
where the new operator $\tilde{B}(h)$ defined as 
    \[
        \tilde{B}(h)f(x)=(2\pi h)^{-1}\int_{\mathbb{R}^{2}}e^{i\tilde{\Phi}(x,x')/h}\chi(x,x')f(x')\,dx',
    \]
    for any $b$ supported on $B_0$.
 Since $\tilde{\Phi}$ satisfies the condition \eqref{eq:condition on FUP for FIO,kim's version} with $C_2=\sqrt{2C_3}$ and 
by applying partition of unity 
on general $b$,  we conclude the 
 desired estimate for $\tilde{B}(h)$ and then for $B(h)$ for exponent $\beta=\beta(\frac{\nu}{4C_3})$ and a very large constant $C$ independent of $h$, but dependent of $\Phi$. 
\end{proof}
\begin{remark}
Note here that $\beta$ also depends on $\Phi$.
\end{remark}

We are now ready to prove Theorem \ref{thm:Spectral}.
\begin{proof}[Proof of Proposition \ref{prop:hyperFUPEuclidean} and Theorem \ref{thm:Spectral}]
It suffice to prove \eqref{eq:hyperbolicFUP-euclidean} with exponent $\beta$ given by \eqref{eq:beta-hyper}. We will use Proposition \ref{prop:FUP_FIO_our_version} for specific phase. By the definition of  the phase function $\Phi$ in \eqref{eq:B_b}, we can directly compute
    \[
        \partial^2_{xx'}\Phi=\frac{1}{R^2}(-I+2{v}{v}^T)
    \]
    where 
  \[R=|x-x'|>0, \quad v=(x-x')/R\in \mathbb{S}^1\subset \mathbb{R}^2.\] Note that this second-order derivative matrix blows up in the diagonal. However it still satisfies the phase condition \eqref{eq:PhaseCond2} with $C_3=1$. 

Now we apply Proposition \ref{prop:FUP_FIO_our_version} for the operator $B=B_\chi(h)$ with the specific phase $\Phi$ in \eqref{eq:B_b} which satisfies the condition \eqref{eq:PhaseCond2} with \[ C_3=1,\] and for the sets
\[X_-=X_+=10C_{\mathrm{out}}\tilde{C}_{\operatorname{arc}}X\]
which is $\nu$-porous on lines from scales $0$ to $1$ by Proposition \ref{prop:Xis-line-porous}, where porosity parameter $\nu$ is given by \eqref{eq:nu-of-X}. We conclude the FUP estimate \eqref{eq:hyperbolicFUP-euclidean} with $\beta=\frac12\beta(\nu/4)$.
    
    According to \eqref{eq:definition-of-Cout} and \eqref{eq:tildeCmu and tildeCarc}, we have estimate
 \[\nu>\frac{1}{2\cdot10^7(10^{11}C_\mu^2)^{1/(\delta-1)}C_{\operatorname{arc}}^2}\,.\] 
So we can actually take
    \begin{equation*}\label{eq:explicit beta in the last proof}
        \beta=\frac12\,\beta\left(\frac{1}{10^8(10^{11}C_\mu^2)^{1/(\delta-1)}C_{\operatorname{arc}}^2}\right)\,,
    \end{equation*}
    where $C_\mu$ and $C_{\operatorname{arc}}$ is defined as in \eqref{eq:delta_regularity_of_Lambda} and \eqref{eq:three_point_condition}.
    
    This completes the proof of the hyperbolic FUP---Proposition  \ref{prop:hyperFUPEuclidean} with exponent $\beta$ in \eqref{eq:beta-hyper} and therefore Theorem \ref{thm:Spectral} with the same
explicit spectral gap $\beta$.
\end{proof}

\subsection{FUP for circles and the resonances for Fuchsian groups}\label{subsec:fuch-Appendix-B}
We recall from Example~\ref{ex:circle} that FUP on a circle cannot follows from Theorem \ref{thm:qfup}.  Consequently, the argument presented in the preceding subsections fails for Fuchsian groups. In this subsection, we demonstrate the existence of an essential spectral gap for Fuchsian hyperbolic 3-manifolds by explicitly determining all poles of the resolvent via separation of variables. This analysis applies when the limit set is a genuine circle, in which case the quotient manifold $M = \Gamma \backslash \mathbb{H}^{3}$ is isometric to a warped product space. 

\medskip

\subsubsection*{Warped product structure}

Let \(\Gamma\) be a convex cocompact Fuchsian group of the first kind, so that its limit set \(\Lambda(\Gamma)\) is the entire circle \(\mathbb{S}^{1} \subset \partial \mathbb{H}^{3}\). Let \(CH(\Lambda)\) denote the convex hull of \(\Lambda(\Gamma)\) in \(\mathbb{H}^{3}\). As shown in \cite[Section 6.1--6.2]{ratcliffe2006foundations}, \(CH(\Lambda)\) is a totally geodesic hyperbolic plane isometric to \(\mathbb{H}^{2}\). Since \(\Gamma\) preserves \(CH(\Lambda)\), the quotient
\[
\Sigma := \Gamma \backslash CH(\Lambda)
\]
is a compact hyperbolic surface (the absence of parabolic elements in \(\Gamma\) guarantees compactness). The normal exponential map of \(\Sigma\) in \(M\) gives a global diffeomorphism
\[
\exp : N\Sigma \simeq \Sigma \times \mathbb{R} \longrightarrow M,
\]
where \(N\Sigma\) is the normal bundle of \(\Sigma\). Because \(\Sigma\) is totally geodesic, the induced metric on \(M\) takes the explicit warped product form
\begin{equation}\label{eq:warped-product-metric}
M \cong \bigl( \mathbb{R}_t \times \Sigma_x,\; dt^{2} + \cosh^{2}t \; dS(x) \bigr),
\end{equation}
where \(dS\) is the Riemannian metric on \(\Sigma\). See also \cite[Appendix A]{MR3077910} for details.

\medskip

\subsubsection*{Resolvent and separation of variables}

The warped product structure \eqref{eq:warped-product-metric} allows us to compute the resonances of the Laplace--Beltrami operator \(-\Delta\) on \(M\) explicitly by separation of variables. Consider the resolvent
\[
\mathcal{R}(s) = \bigl(-\Delta - s(2-s)\bigr)^{-1},
\]
initially defined for \(\operatorname{Re} s > 1\) and then meromorphically continued to \(\mathbb{C}\). In the warped product coordinates \((t,x)\) the Laplacian reads
\begin{equation}\label{eq:Laplacian-warped}
-\Delta = - \partial_t^{2} - 2\tanh t \, \partial_t + \frac{1}{\cosh^{2}t} (-\Delta_{\Sigma}),
\end{equation}
where \(\Delta_{\Sigma}\) is the Laplace--Beltrami operator on the compact hyperbolic surface \(\Sigma\).

Let \(\{\phi_k\}_{k=0}^{\infty}\) be an orthonormal eigenbasis of \(L^{2}(\Sigma)\) satisfying
\[
-\Delta_{\Sigma} \phi_k = \mu_k \phi_k, \qquad 
0 = \mu_0 < \mu_1 \le \mu_2 \le \dots \to +\infty.
\]
Expanding any function \(u(t,x) \in C^{\infty}(M)\) as \(u(t,x)=\sum_k u_k(t)\phi_k(x)\) and by product representation \eqref{eq:Laplacian-warped}, the equation \((-\Delta - s(2-s))u = v\) reduces, for each \(k\), to the ordinary differential equation
\begin{equation}\label{eq:radial-ode}
-u_k''(t) - 2\tanh t \, u_k'(t) + \Bigl( -s(2-s) + \frac{\mu_k}{\cosh^{2}t} \Bigr) u_k(t) = v_k(t).
\end{equation}
The substitution \(u_k(t) = (\cosh t)^{-1} f_k(t)\) transforms \eqref{eq:radial-ode} into
\begin{equation}\label{eq:poschl-teller}
-f_k''(t) + \frac{\mu_k}{\cosh^{2}t}\, f_k(t) + (s-1)^{2} f_k(t) = v_k(t).
\end{equation}

Equation \eqref{eq:poschl-teller} is a one-dimensional Schr\"{o}dinger equation with the P\"{o}schl--Teller potential \(V_k(t)=\mu_k/\cosh^{2}t\) and energy parameter \(E = -(s-1)^{2}\) (see \cite[Section 5.1]{borthwick2016spectral}). Denote by
\[
\mathcal{R}_k(\lambda) := \bigl(-\partial_t^{2} + V_k - \lambda^{2}\bigr)^{-1}, \qquad \lambda = i(s-1),
\]
the meromorphically extended resolvent of the Schr\"odinger operator \(-\partial_t^{2}+V_k\) on \(\mathbb{R}\). The scattering poles of \(\mathcal{R}_k\) are classical: for \(k \ge 1\) they occur at
\[
\lambda = \pm \sqrt{\mu_k - \tfrac14} - i\bigl(n+\tfrac12\bigr), \qquad n \in \mathbb{N},
\]
while for \(k=0\) the only pole is at \(\lambda = 0\) (i.e., \(s=1\)). Because the full resolvent \(\mathcal{R}(s)\) on \(M\) decomposes as the direct sum 
\[\mathcal{R}(s)=(\cosh t)^{-1}\bigoplus_k  \mathcal{R}_k(i(s-1))\,,\] its resonances are obtained from these one‑dimensional data. For \(k \ge 1\) the poles \(s_{k,n}^{\pm}\) are given explicitly by
\begin{equation}\label{eq:explicit-poles}
s_{k,n}^{\pm} = \frac12 - n \pm \sqrt{\frac14 - \mu_k}, \qquad n \in \mathbb{N}.
\end{equation}
Together with the possible simple pole at \(s=1\) (coming from \(k=0\)), these constitute all resonances of \(-\Delta\) on \(M\).

\medskip

\subsubsection*{Spectral gap}
Since the eigenvalues \(\mu_k \to \infty\) as \(k \to \infty\), formula \eqref{eq:explicit-poles} shows that all resonances except $s=1$ satisfy \(\operatorname{Re} s \le \frac12\). Hence the operator \(-\Delta\) possesses an essential spectral gap:  there are finite resonances with \(\operatorname{Re} s > \frac12\). Translating the spectral parameter to the convention \(-\Delta - 1 - \lambda^{2}\) used in Theorem~\ref{thm:Spectral}, we obtain the following result.

\begin{proposition}\label{prop:Fuchsian-gap}
Let \(M = \Gamma \backslash \mathbb{H}^{3}\) be a convex cocompact hyperbolic \(3\)-manifold whose limit set \(\Lambda(\Gamma)\subset \mathbb{S}^{2}\) is a genuine circle. Then the meromorphic resolvent
\[
R(\lambda) = \bigl(-\Delta - 1 - \lambda^{2}\bigr)^{-1}
\]
has at most one pole $\lambda=0$ in 
 the half‑plane \(\operatorname{Im}\lambda > -\frac12\). That is, $M$  has an essential spectral gap of size $\beta=\frac12$. 
\end{proposition}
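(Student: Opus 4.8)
The plan is to prove Proposition~\ref{prop:Fuchsian-gap} by a direct spectral computation, using the warped–product description \eqref{eq:warped-product-metric} and the separation of variables \eqref{eq:Laplacian-warped}--\eqref{eq:poschl-teller} already set up above; there is no FUP on a circle, so everything is reduced to the explicitly solvable P\"oschl--Teller family on $\mathbb{R}$, whose resonances are classical.

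\emph{Step 1: modal decomposition of the continued resolvent.} Expanding in the eigenbasis $\{\phi_k\}$ of $-\Delta_\Sigma$ and conjugating by $(\cosh t)^{-1}$ as in \eqref{eq:radial-ode}--\eqref{eq:poschl-teller}, one has for $\operatorname{Im}\lambda\gg 1$ the orthogonal decomposition
\[
R(\lambda)=(\cosh t)^{-1}\Big(\bigoplus_{k\ge 0}\mathcal{R}_k(\lambda)\Big)(\cosh t),\qquad \mathcal{R}_k(\lambda)=\big(-\partial_t^{2}+\mu_k\cosh^{-2}t-\lambda^{2}\big)^{-1},
\]
on $L^2(M)\cong\bigoplus_k L^2(\mathbb{R},\cosh^2 t\,dt)$. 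Each $\mathcal{R}_k(\lambda)$ is the resolvent of a one–dimensional Schr\"odinger operator with a smooth, nonnegative, exponentially decaying potential, so it continues meromorphically from $\{\operatorname{Im}\lambda>0\}$ to $\mathbb{C}$ as a family $L^2_{\mathrm{comp}}(\mathbb{R})\to L^2_{\mathrm{loc}}(\mathbb{R})$ with poles only in $\{\operatorname{Im}\lambda\le 0\}$. I would then check that the high–mode tail is harmless: for $\mu_k$ large and $\lambda$ in a fixed compact subset of $\{\operatorname{Im}\lambda>-\tfrac12\}$ the operator $-\partial_t^{2}+\mu_k\cosh^{-2}t-\lambda^{2}$ is invertible with norm $O(\mu_k^{-1})$ between suitably weighted spaces, so $\sum_{k\ge K}(\cosh t)^{-1}\mathcal{R}_k(\lambda)(\cosh t)$ is holomorphic there. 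Hence the meromorphic continuation of $R(\lambda)$ is precisely this direct sum, and its poles in $\{\operatorname{Im}\lambda>-\tfrac12\}$ come only from the finitely many modes with $\mu_k$ below a fixed threshold.

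\emph{Step 2: locating the low–mode poles.} For $k=0$ the operator is $-\partial_t^{2}$ on $\mathbb{R}$, with resolvent kernel $\tfrac{i}{2\lambda}e^{i\lambda|t-t'|}$, hence a single simple pole at $\lambda=0$ (i.e.\ $s=1$). For $k\ge 1$ I would use the substitution $z=\tanh t$, which turns \eqref{eq:poschl-teller} into an associated Legendre equation; the Jost function is then an explicit quotient of $\Gamma$–functions whose zeros give the resonances $\lambda=\pm\sqrt{\mu_k-\tfrac14}-i(n+\tfrac12)$, $n\in\mathbb{N}$, equivalently the values $s_{k,n}^\pm=\tfrac12-n\pm\sqrt{\tfrac14-\mu_k}$ of \eqref{eq:explicit-poles}, after discarding the over–counted spurious values (as already happens at $\mu_0=0$). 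When $\mu_k\ge\tfrac14$ all of these satisfy $\operatorname{Im}\lambda\le-\tfrac12$, i.e.\ $\operatorname{Re} s\le\tfrac12$, so such modes contribute nothing to $\{\operatorname{Im}\lambda>-\tfrac12\}$. Since $\mu_k\to\infty$, and using $\lambda=i(s-1)$, which maps $\{\operatorname{Im}\lambda>-\tfrac12\}$ onto $\{\operatorname{Re} s>\tfrac12\}$, we conclude that $R(\lambda)$ has only finitely many poles in $\{\operatorname{Im}\lambda>-\tfrac12\}$ — the threshold pole $s=1$ together with the finitely many points $i(s_{k,0}^+-1)$ coming from modes with $\mu_k\in(0,\tfrac14)$, so that $\lambda=0$ is the only pole whenever $\Sigma$ carries no Laplace eigenvalue in $(0,\tfrac14)$ — which is the asserted essential spectral gap of size $\beta=\tfrac12$. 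The cutoff resolvent estimates in Theorem~\ref{thm:Spectral} then follow by translating the one–dimensional bounds on $\{\operatorname{Im}\lambda\ge-\tfrac12+\epsilon\}$ back to $M$.

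The main obstacle is Step~1: making precise that the meromorphically continued $R(\lambda)$ equals the direct sum of the continued modal resolvents $(\cosh t)^{-1}\mathcal{R}_k(\lambda)(\cosh t)$ and that the tail $\sum_{k\ge K}$ contributes no poles in $\{\operatorname{Im}\lambda>-\tfrac12\}$. This needs uniform–in–$k$ resolvent estimates for the P\"oschl--Teller operators in the relevant $\lambda$–region, phrased in weights compatible with the $L^2_{\mathrm{comp}}\to L^2_{\mathrm{loc}}$ mapping; the natural route is to cut $t$ to a large interval, treat the interior by the modal expansion and the two ends by the exact exterior solution $e^{i\lambda|t|}$, as in standard funnel/cylinder resolvent gluings. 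A secondary but necessary check is the removal of the spurious values in the $\Gamma$–function formula for the low modes — in particular, that for $0<\mu_k<\tfrac14$ only $s_{k,0}^+$ survives with $\operatorname{Re} s>\tfrac12$ — since otherwise the count of poles (though not the gap size $\tfrac12$) would be affected.
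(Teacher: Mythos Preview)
Your approach coincides with the paper's: separation of variables in the warped product \eqref{eq:warped-product-metric} reduces the resolvent to the P\"oschl--Teller family \eqref{eq:poschl-teller}, whose classical resonance formula \eqref{eq:explicit-poles} yields the gap. The paper simply asserts the modal identity $\mathcal{R}(s)=(\cosh t)^{-1}\bigoplus_k\mathcal{R}_k(i(s-1))$ and reads off the conclusion without addressing the tail convergence or spurious-zero bookkeeping you flag, and it obtains the cutoff resolvent bound separately via a direct hyperbolic FUP on the circle rather than from one-dimensional estimates; your caveat that modes with $\mu_k\in(0,\tfrac14)$ would produce additional poles in $\{\operatorname{Im}\lambda>-\tfrac12\}$ is in fact more careful than the paper's own statement, though the essential gap $\beta=\tfrac12$ is unaffected.
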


\begin{remark}
This proposition together with Theorem \ref{thm:Spectral}  resolves the spectral gap problem for \emph{all} quasi-Fuchsian groups. In analogy with Theorem~\ref{thm:Spectral} we expect the following cutoff resolvent estimate: 
\[
\bigl\| \chi R(\lambda) \chi \bigr\|_{L^{2}(M) \to L^{2}(M)}
\le \widetilde{C}_{\chi,\epsilon} \,
|\lambda|^{-1-2\min(0,\operatorname{Im}\lambda)+\epsilon},
\qquad
\operatorname{Im}\lambda \in \bigl[-\tfrac12+\epsilon,\;1\bigr],\;
|\operatorname{Re}\lambda| \ge C_{\epsilon}.
\]
The factor \(|\lambda|^{-1-2\min(0,\operatorname{Im}\lambda)+\epsilon}\) reflects the polynomial decay of the resolvent in the specified strip, modulo an arbitrarily small loss of \(\epsilon\). 
While a purely spectral argument would yield a resolvent bound (perhaps with a non‑sharp exponent), we present below a direct hyperbolic FUP for circles that yields the sharp gap.
\end{remark}

\medskip
\subsubsection*{Hyperbolic FUP for circles} Assume the limit set is the equator
\[
\Lambda = \bigl\{ (x,y,z)\in \mathbb{S}^{2}\subset\mathbb{R}^{3} : x^{2}+y^{2}=1,\;z=0 \bigr\}.
\]
Introduce spherical coordinates \(w = w(\theta, \phi)\) with \(\theta \in (-\pi/2,\pi/2)\) and \(\phi \in (0,2\pi)\) by
\[
 x = \cos\theta\cos\phi,\qquad y = \cos\theta\sin\phi,\qquad  z = \sin\theta.
\]
For \(w' = w'(\theta',\phi') \in \Lambda_{(Ch^{\rho})} \subset \mathbb{S}^{2}\) we write
\begin{align*}
\mathcal{B}_{\chi}(h)\bigl(\mathbf{1}_{\Lambda(Ch^{\rho})}f\bigr)(w)
&= (2\pi h)^{-1}\int_{\mathbb{S}^{2}}
|w-w'|^{2i/h}\,\chi(w,w')\,\mathbf{1}_{\Lambda(Ch^{\rho})}(w')\,f(w')\,dw' \\
&= (2\pi h)^{-1}\int_{-2Ch^{\rho}}^{2Ch^{\rho}}\int_{0}^{2\pi}
|w-w'|^{2i/h}\,\chi(w,w')\,\mathbf{1}(|\sin\theta'|\le Ch^{\rho})\\
&\hskip 5cm \times f(\theta',\phi')\,\cos\theta'\,d\phi'\,d\theta'.
\end{align*}
Expanding \(\chi\) around \(\theta=\theta'=0\) gives
\begin{align*}
\mathcal{B}_{\chi}(h)\bigl(\mathbf{1}_{\Lambda(Ch^{\rho})}f\bigr)(w)
&= (2\pi h)^{-1}\int_{-2Ch^{\rho}}^{2Ch^{\rho}}\int_{0}^{2\pi}
|w-w'|^{2i/h}\,\chi_{0}(\phi,\phi')\,\mathbf{1}(|\sin\theta'|\le Ch^{\rho})\\
&\hskip 5cm \times f(\theta',\phi')\,d\phi'\,d\theta' \;+\; R(h)f \\
&= h^{-1/2}\int_{-2Ch^{\rho}}^{2Ch^{\rho}}
T(h,\theta,\theta')\bigl(f(\theta',\cdot)\bigr)(\phi)\,
\mathbf{1}(|\sin\theta'|\le Ch^{\rho})\,d\theta' \;+\; R(h)f ,
\end{align*}
where \(\chi_{0}(\phi,\phi') = \chi(\theta=0,\theta'=0,\phi,\phi')\),
 \(R(h) = \mathcal{O}(h^{\rho})_{L^{2}(\mathbb{S}^{2})\to L^{2}(\mathbb{S}^{2})}\),
and \(T(h,\theta,\theta')\) is a one‑dimensional Fourier integral operator defined for \(f\in C^{\infty}(\mathbb{S}^{1})\) by
\[
T(h,\theta,\theta')f(\phi)
= (2\pi)^{-1}h^{-1/2}
\int_{0}^{2\pi} \exp\bigl(i\Phi_{\theta,\theta'}(\phi,\phi')/h\bigr)\,
\chi_{0}(\phi,\phi')\,f(\phi')\,d\phi',
\]
with phase
\begin{align*}
\Phi_{\theta,\theta'}(\phi,\phi') =
\ln\!\bigl[&(\sin\theta-\sin\theta')^{2}
+ (\cos\theta\cos\phi-\cos\theta'\cos\phi')^{2} \\
&+ (\cos\theta\sin\phi-\cos\theta'\sin\phi')^{2}\bigr].
\end{align*}
The operator \(T\) is \(L^{2}(\mathbb{S}^{1}_{\phi'})\to L^{2}(\mathbb{S}^{1}_{\phi})\) bounded,  uniformly for sufficiently small \(\theta,\theta'\). Indeed, when \(\theta=\theta'=0\) it reduces to the model operator in \cite[(1.6)]{dyatlov2016spectral} with \(n=2\); for small \(\theta,\theta'\) the phase remains non‑degenerate on the support of \(\chi_{0}\), and uniform \(L^{2}\)‑boundedness follows from a standard \(TT^{*}\) argument.

Set \(g(w) = \mathbf{1}_{\Lambda(Ch^{\rho})}\mathcal{B}_{\chi}(h)\mathbf{1}_{\Lambda(Ch^{\rho})}f(w)-R(h)f(w)\). By Cauchy–Schwarz inequality,
\begin{align*}
\|g\|_{L^{2}(\mathbb{S}^{2})}^{2}
&\le \int_{-2Ch^{\rho}}^{2Ch^{\rho}}\int_{0}^{2\pi} |g(\theta,\phi)|^{2}\,d\phi\,d\theta \\
&\le \int_{-2Ch^{\rho}}^{2Ch^{\rho}} h^{-1}
\int_{0}^{2\pi}
\Bigl(\int_{-2Ch^{\rho}}^{2Ch^{\rho}}
\bigl|T(h,\theta,\theta')f(\theta',\cdot)(\phi)\bigr|\,d\theta'\Bigr)^{2}
d\phi\,d\theta \\
&\le \int_{-2Ch^{\rho}}^{2Ch^{\rho}} 2Ch^{-1+\rho}
\int_{0}^{2\pi}\int_{-2Ch^{\rho}}^{2Ch^{\rho}}
\bigl|T(h,\theta,\theta')f(\theta',\cdot)(\phi)\bigr|^{2}
d\theta'\,d\phi\,d\theta \\
&= \int_{-2Ch^{\rho}}^{2Ch^{\rho}} 2Ch^{-1+\rho}
\int_{-2Ch^{\rho}}^{2Ch^{\rho}}
\bigl\|T(h,\theta,\theta')f(\theta',\cdot)\bigr\|_{L^{2}(\mathbb{S}^{1}_{\phi})}^{2}
d\theta'\,d\theta \\
&\lesssim \int_{-2Ch^{\rho}}^{2Ch^{\rho}} h^{-1+\rho}
\int_{-2Ch^{\rho}}^{2Ch^{\rho}}
\int_{0}^{2\pi}|f(\theta',\phi')|^{2}\,d\phi'\,d\theta'\,d\theta \\
&\lesssim h^{-1+2\rho}\,\|f\|_{L^{2}(\mathbb{S}^{2})}^{2}.
\end{align*}
Hence estimate \eqref{eq:hyperfup} holds with exponent \(\beta = 1/2\) and any \(\epsilon>0\). Summarising,
\begin{proposition}
If \(\Lambda = \mathbb{S}^{1}\), then
\[
\bigl\|\mathbf{1}_{\Lambda(C_{1}h^{\rho})}
\mathcal{B}_{\chi}(h)\mathbf{1}_{\Lambda(C_{1}h^{\rho})}
\bigr\|_{L^{2}(\mathbb{S}^{2})\to L^{2}(\mathbb{S}^{2})}
\le C h^{1/2-\varepsilon},\qquad h\in(0,1/100).
\]
Consequently, the resolvent possesses a spectral gap of size \(1/2\) and satisfies
\[
\bigl\| \chi R(\lambda) \chi \bigr\|_{L^{2}(M)\to L^{2}(M)}
\le \widetilde{C}_{\chi,\epsilon}\,
|\lambda|^{-1-2\min(0,\operatorname{Im}\lambda)+\epsilon},
\]
for all \(\lambda\) with \(\operatorname{Im}\lambda \in [-\tfrac12+\epsilon,\,1]\) and \(|\operatorname{Re}\lambda| \ge C_{\epsilon}\).
\end{proposition}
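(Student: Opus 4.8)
The plan is to exploit the rotational symmetry of the equator $\Lambda=\mathbb S^1\subset\mathbb S^2$ to decouple $\mathcal B_\chi(h)$, once it is sandwiched between the thin collar cutoffs $\mathbf 1_{\Lambda(C_1h^\rho)}$, into a one–parameter family of \emph{one}–dimensional Fourier integral operators on the circle, each of which is $L^2$–bounded with an $h$–independent norm; a Cauchy--Schwarz over the collar (whose transverse width is $\asymp h^\rho$) then produces the gain. First I would pass to spherical coordinates $w=w(\theta,\phi)$, so that $\Lambda=\{\theta=0\}$ and, up to harmless constants, $\Lambda(C_1h^\rho)$ becomes the slab $S_h=\{|\sin\theta|\le C_1h^\rho\}$. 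Writing $\mathbf 1_{S_h}\mathcal B_\chi(h)\mathbf 1_{S_h}f$ in these coordinates and Taylor expanding both $\chi(w,w')$ and the Jacobian $\cos\theta'$ about $\theta=\theta'=0$, I separate the main term
\[
h^{-1/2}\int_{|\theta'|\lesssim h^{\rho}}T(h,\theta,\theta')\big(f(\theta',\cdot)\big)(\phi)\,\mathbf 1(|\sin\theta'|\le C_1h^{\rho})\,d\theta'
\]
from a remainder $R(h)f$ carrying an extra amplitude factor $O(|\theta|+|\theta'|)=O(h^{\rho})$ on $S_h$; here $T(h,\theta,\theta')$ is the one–dimensional FIO on $L^2(\mathbb S^1)$ with the logarithmic phase $\Phi_{\theta,\theta'}(\phi,\phi')$ displayed above and amplitude $\chi_0(\phi,\phi')=\chi(0,0,\phi,\phi')$.

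The analytic heart of the argument is to show that $T(h,\theta,\theta')$ is bounded $L^2(\mathbb S^1)\to L^2(\mathbb S^1)$ with norm $O(1)$ \emph{uniformly} for $(\theta,\theta')$ in a fixed neighbourhood of the origin (in particular on $S_h$), and similarly that the FIOs assembling $R(h)$ are uniformly $L^2(\mathbb S^2)$–bounded, so that $R(h)=O(h^{\rho})_{L^2\to L^2}$. At $\theta=\theta'=0$ the phase $\Phi_{0,0}$ is exactly the Dyatlov--Zahl model phase (the case $n=2$ of \cite[(1.6)]{dyatlov2016spectral}), for which $\partial^2_{\phi\phi'}\Phi_{0,0}\neq 0$ on $\operatorname{supp}\chi_0$; by continuity this non-degeneracy survives for small $\theta,\theta'$. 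A routine $TT^{*}$ argument then applies: the Schwartz kernel of $TT^{*}$ is an oscillatory integral in $\phi'$ whose phase has gradient $\partial_{\phi'}\Phi_{\theta,\theta'}(\phi,\phi')-\partial_{\phi'}\Phi_{\theta,\theta'}(\tilde\phi,\phi')$; by non-degeneracy this vanishes only when $\phi=\tilde\phi$, and a change of variables $\phi'\mapsto\partial_{\phi'}\Phi$ identifies the kernel, uniformly in the parameters, with a semiclassical Fourier transform of a fixed symbol, whence Schur's test gives the $O(1)$ bound with constants depending only on finitely many derivatives of $\Phi$ and $\chi_0$ on a compact set.

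Granting this, the estimate follows by a Cauchy--Schwarz over the thin slab. With $g:=\mathbf 1_{S_h}\mathcal B_\chi(h)\mathbf 1_{S_h}f-R(h)f$, I bound $|g(\theta,\phi)|$ by $h^{-1/2}$ times the $d\theta'$–integral of $|T(h,\theta,\theta')(f(\theta',\cdot))(\phi)|$ over $S_h$, apply Cauchy--Schwarz in $\theta'$ (gaining a factor $\asymp h^{\rho}$ from the slab width), then Fubini in $\phi$ together with $\|T(h,\theta,\theta')\|_{L^2\to L^2}\lesssim 1$, and finally integrate in $\theta$ over $S_h$ (another factor $\asymp h^{\rho}$). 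This yields $\|g\|_{L^2(\mathbb S^2)}^2\lesssim h^{-1+2\rho}\|f\|_{L^2(\mathbb S^2)}^2$, i.e.\ $\|g\|\lesssim h^{\rho-1/2}\|f\|$; combined with $\|R(h)f\|\lesssim h^{\rho}\|f\|$ and taking $\rho$ arbitrarily close to $1$, this gives $\|\mathbf 1_{\Lambda(C_1h^\rho)}\mathcal B_\chi(h)\mathbf 1_{\Lambda(C_1h^\rho)}\|\lesssim h^{1/2-\varepsilon}$ for every $\varepsilon>0$ — the hyperbolic FUP with exponent $\beta=\tfrac12$. Feeding this into \cite[Theorem 3]{dyatlov2016spectral} converts it into an essential spectral gap of size $\tfrac12$ for $-\Delta$ on $M$ and the cutoff resolvent bound $\|\chi R(\lambda)\chi\|\lesssim|\lambda|^{-1-2\min(0,\operatorname{Im}\lambda)+\epsilon}$ on the strip $\operatorname{Im}\lambda\in[-\tfrac12+\epsilon,1]$, $|\operatorname{Re}\lambda|\ge C_\epsilon$. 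I expect the main obstacle to be precisely the \emph{uniform}-in-$(\theta,\theta')$ control of the one–dimensional operators $T(h,\theta,\theta')$ and of the Taylor remainder $R(h)$; everything else is bookkeeping, and even this reduces to checking that the model phase is non-degenerate on the relevant compact set and that the perturbation preserves this.
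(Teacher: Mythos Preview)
Your proposal is correct and follows essentially the same route as the paper: pass to spherical coordinates, Taylor expand $\chi$ about $\theta=\theta'=0$ to isolate a one-dimensional FIO $T(h,\theta,\theta')$ on $\mathbb S^1$ plus an $O(h^\rho)$ remainder, establish the uniform $L^2$-boundedness of $T$ via non-degeneracy of the model phase and a $TT^*$ argument, and then run Cauchy--Schwarz over the slab to gain $h^{-1+2\rho}$ before invoking \cite[Theorem~3]{dyatlov2016spectral}. The paper's chain of inequalities in the $\|g\|_{L^2}^2$ computation is exactly your Cauchy--Schwarz/Fubini bookkeeping, and the anticipated ``main obstacle'' you flag (uniform control of $T$ and $R(h)$) is precisely what the paper dispatches in one sentence by continuity of the non-degeneracy.
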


\begin{remark}
The rough Fourier decay of the circle measure alone does not yield the sharp exponent for the FUP. Nevertheless, the same separation‑of‑variables technique shows that for any \(C_{1}>0\) there exists \(C>0\) such that, when \(X = \mathbb{S}^{1}\subset \mathbb{R}^{2}\),
\[
\bigl\|\mathbf{1}_{X(C_{1}h)}\mathcal{F}_{h}\mathbf{1}_{X(C_{1}h)}\bigr\|
_{L^{2}(\mathbb{R}^{2})\to L^{2}(\mathbb{R}^{2})}
\le C h^{-1/2}.
\]
\end{remark}

\appendix \section{Some useful lemmas} \label{sec:appA}
For the reader's convenience, we present several technical lemmas from \cite{MR4927737}, concerning the relations, transformations, restrictions, and measures of porous sets.
\begin{lemma}[\cite{MR4927737} Lemma A.3]
\label{lem:compare_porosity}
Let $\mathbf X \subset [-1,1]^d$ be $\nu$-porous on balls from scale $h$ to $1$. Then $\mathbf X$ is box porous at scale $L = \lceil \nu^{-1}\sqrt{d}\rceil$ with depth $n$ for all $n\geq 0$ with $L^{-n} \geq h$. 
\end{lemma}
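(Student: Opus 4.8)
The plan is to prove Lemma~\ref{lem:compare_porosity} by unpacking the definition of box porosity and using the $\nu$-porosity-on-balls hypothesis on an appropriately chosen ball inside each dyadic cube. Recall box porosity at scale $L$ with depth $n$ asks: for every cube $Q$ in the partition $\mathcal C_n$ of $[-1,1]^d$ into cubes of side length $L^{-n}$ with $Q\cap\mathbf X\neq\emptyset$, there is a sub-cube $Q'\in\mathcal C_{n+1}$ with $Q'\subset Q$ and $Q'\cap\mathbf X=\emptyset$. So fix such a $Q$. Its side length is $L^{-n}$, so its inscribed ball $B$ has diameter $L^{-n}$, which satisfies $h\le L^{-n}\le 1$ (the lower bound is the hypothesis $L^{-n}\ge h$, the upper bound is clear). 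Thus $\nu$-porosity on balls applies to $B$: there is a point $x\in B$ with $B_{\nu L^{-n}}(x)\cap\mathbf X=\emptyset$.

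Next I would convert this empty ball into an empty sub-cube. The sub-cubes in $\mathcal C_{n+1}$ contained in $Q$ have side length $L^{-n-1}$, and $Q$ is tiled by exactly $L^d$ of them. The point $x$ lies in one such sub-cube, but $x$ may be near the boundary of that sub-cube, so one should instead observe that the ball $B_{\nu L^{-n}}(x)$ contains a cube of side length $\nu L^{-n}/\sqrt d$ centered at $x$ (or, more carefully, some axis-parallel cube of that side length inside the ball); since $L=\lceil\nu^{-1}\sqrt d\rceil\ge\nu^{-1}\sqrt d$, we have $\nu L^{-n}/\sqrt d\ge L^{-1}\cdot L^{-n}=L^{-n-1}$, so the ball contains a translate of a side-$L^{-n-1}$ cube. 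The remaining point is alignment: a cube of side $L^{-n-1}$ placed arbitrarily inside $B_{\nu L^{-n}}(x)$ need not be one of the grid cubes of $\mathcal C_{n+1}$. To fix this one takes the ball slightly larger — use that $B_{\nu L^{-n}}(x)$ contains a concentric cube of side $2\nu L^{-n}/\sqrt d \ge 2L^{-n-1}$ (a cube of side $2\ell$ always contains a grid-aligned cube of side $\ell$ of any given integer lattice of spacing $\ell$), hence contains at least one full cube $Q'\in\mathcal C_{n+1}$; such a $Q'$ automatically satisfies $Q'\subset Q$ (as it meets $B\subset Q$ and has side length dividing that of $Q$, and is disjoint from $\mathbf X$). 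This $Q'$ is the required empty sub-cube.

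The main obstacle, and the only genuinely delicate point, is precisely this alignment/commensurability issue: translating ``an empty ball of radius $\nu L^{-n}$'' into ``an empty grid-aligned sub-cube of the refinement $\mathcal C_{n+1}$.'' The slack in the definition $L=\lceil\nu^{-1}\sqrt d\rceil$ (rather than exactly $\nu^{-1}\sqrt d$) is what provides the room: one needs the empty region to be large enough to contain a full lattice cube no matter where it sits, which requires its linear size to exceed $2L^{-n-1}$ up to the $\sqrt d$ factor from passing between $\ell^2$-balls and $\ell^\infty$-cubes. I would record this as an elementary geometric sublemma (a ball of radius $\rho$ contains a grid-aligned cube of side $\ell$ of a spacing-$\ell$ lattice whenever $\rho\ge\ell\sqrt d$, say) and then the proof is a one-paragraph application. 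One should double-check the edge case $n=0$, where $\mathcal C_0$ is the single cube $[-1,1]^d$ itself, and the argument goes through verbatim with inscribed-ball diameter $2$ (still $\le$ the allowed scale $1$ after noting the definition uses diameter $R$ with $h<R<1$ — here one may need $L^{-n}<1$ strictly or absorb the endpoint, a trivial adjustment).
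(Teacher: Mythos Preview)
Your overall strategy is right, but the detour in the middle introduces a gap and is unnecessary.  After applying ball-porosity to the inscribed ball $B\subset Q$ and obtaining $x\in B$ with $B_{\nu L^{-n}}(x)\cap\mathbf X=\emptyset$, you consider the grid sub-cube $Q''\in\mathcal C_{n+1}$ containing $x$ but then dismiss it because ``$x$ may be near the boundary of that sub-cube.''  This worry is unfounded: every point of $Q''$ lies at Euclidean distance at most $\operatorname{diam}(Q'')=L^{-n-1}\sqrt d$ from $x$, and the very choice $L=\lceil\nu^{-1}\sqrt d\rceil\ge\nu^{-1}\sqrt d$ gives $L^{-n-1}\sqrt d\le\nu L^{-n}$, so $Q''\subset B_{\nu L^{-n}}(x)$ \emph{regardless} of where $x$ sits inside $Q''$.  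Since $x\in Q$ one may always choose such a $Q''$ with $Q''\subset Q$ (if $x$ is on a face of $Q$, take the sub-cube on the $Q$ side), and then $Q''\cap\mathbf X=\emptyset$.  That is the whole argument.

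Your replacement route, via the concentric cube $C_x$ of side $2L^{-n-1}$ and the fact that a side-$2\ell$ cube contains some grid-aligned side-$\ell$ cube, is correct as a sublemma but fails at the step $Q'\subset Q$.  Since $x$ may lie on $\partial Q$ (the closed inscribed ball touches the face centers), $C_x$ can protrude beyond $Q$, and the grid cube $Q'\subset C_x$ produced by the doubling lemma may be the one just outside $Q$.  Your justification ``as it meets $B\subset Q$'' is not valid: nothing forces $Q'$ to meet $B$.  Concretely, in $d=1$ with $L=\nu^{-1}\in\mathbb Z$, $Q=[0,L^{-n}]$ and $x=L^{-n}$, the interval $C_x=[(L-1)L^{-n-1},(L+1)L^{-n-1}]$ contains the two grid intervals with $k\in\{L-1,L\}$, and the doubling lemma may select $k=L$, giving $Q'=[L^{-n},L^{-n}+L^{-n-1}]\not\subset Q$.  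So drop the detour and keep your first thought.  Your remarks on the endpoint scales $L^{-n}\in\{h,1\}$ are fine and are handled by the usual $\epsilon$-shrinking.
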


\begin{lemma}[\cite{MR4927737} Lemma A.5] \label{lem:porous_transf}
    Let \( \mathbf{X} \subset \mathbb{R}^d \) be \(\nu\)-porous on lines from scales \(\alpha_0\) to \(\alpha_1\).
    \begin{enumerate}[label=(\alph*), leftmargin=*, nosep]
        \item Let \(\alpha_0 < r < \alpha_1\) and let \(\nu' < \nu\). Then \(\mathbf{X} + B_r\) is \(\nu'\)-porous on lines from scales \(r/(\nu - \nu')\) to \(\alpha_1\).
        \item For any \(s > 0\), the dilate \(s \cdot \mathbf{X}\) is \(\nu\)-porous on lines from scales \(s\alpha_0\) to \(s\alpha_1\).
        \item Let \(\ell \subset \mathbb{R}^d\) be a line. Let \(\mathbf{X}|_\ell = \mathbf{X} \cap \ell\), and view \(\mathbf{X}|_\ell\) as a subset of \(\mathbb{R}\). Then \(\mathbf{X}|_\ell\) is \(\nu\)-porous from scales \(\alpha_0\) to \(\alpha_1\).
    \end{enumerate}
\end{lemma}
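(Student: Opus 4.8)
The plan is to treat the three parts separately; each is a short geometric argument, with parts (b) and (c) essentially definitional and part (a) the one where the scale threshold $r/(\nu-\nu')$ must be tracked. Throughout, $B_r$ denotes the ball $B(0,r)$, so that $\mathbf X+B_r$ is the $r$-neighborhood of $\mathbf X$.

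For (b) I would rescale. Given a line segment $\tau$ of length $R$ with $s\alpha_0<R<s\alpha_1$, the segment $s^{-1}\tau$ has length $R/s\in(\alpha_0,\alpha_1)$, so $\nu$-porosity of $\mathbf X$ on lines yields $x\in s^{-1}\tau$ with $B_{\nu R/s}(x)\cap\mathbf X=\emptyset$. Multiplying by $s$ sends $x$ to $sx\in\tau$, the ball $B_{\nu R/s}(x)$ to $B_{\nu R}(sx)$, and $\mathbf X$ to $s\mathbf X$, whence $B_{\nu R}(sx)\cap(s\mathbf X)=\emptyset$, which is exactly the required hole. For (c), any line segment $\tau\subset\ell$ of length $R\in(\alpha_0,\alpha_1)$ is a fortiori a line segment in $\mathbb R^d$, so $\nu$-porosity of $\mathbf X$ gives $x\in\tau$ with $B_{\nu R}(x)\cap\mathbf X=\emptyset$; intersecting with $\ell$, the one-dimensional ball of radius $\nu R$ about $x$ inside $\ell\cong\mathbb R$ is contained in $B_{\nu R}(x)$ and hence misses $\mathbf X\cap\ell=\mathbf X|_\ell$, which is the claim.

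For (a), the key observation is that the threshold is chosen precisely so that $R>r/(\nu-\nu')$ forces $(\nu-\nu')R>r$, i.e.\ $\nu R>\nu' R+r$, and moreover $r/(\nu-\nu')>r>\alpha_0$, so any $R$ with $r/(\nu-\nu')<R<\alpha_1$ automatically lies in $(\alpha_0,\alpha_1)$. Thus, given a segment $\tau$ of length $R$ in this range, $\nu$-porosity of $\mathbf X$ produces $x\in\tau$ with $B_{\nu R}(x)\cap\mathbf X=\emptyset$, i.e.\ $\operatorname{dist}(x,\mathbf X)\ge\nu R$. I then claim the same $x$ witnesses $\nu'$-porosity of $\mathbf X+B_r$ at scale $R$: if some $y\in B_{\nu' R}(x)\cap(\mathbf X+B_r)$ existed, writing $y=z+b$ with $z\in\mathbf X$ and $|b|<r$ would give $\operatorname{dist}(x,\mathbf X)\le|x-y|+|y-z|<\nu' R+r<\nu R$, a contradiction; hence $B_{\nu' R}(x)\cap(\mathbf X+B_r)=\emptyset$.

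I do not anticipate a genuine obstacle. The only points deserving care are the open-versus-closed-ball conventions implicit in the definition of porosity — harmless here, since the distance inequalities above hold with strict room to spare — and, in part (a), checking that $r/(\nu-\nu')$ does sit above $\alpha_0$ so that the porosity hypothesis on $\mathbf X$ can legitimately be invoked at scale $R$.
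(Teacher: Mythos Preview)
Your argument is correct in all three parts; the triangle-inequality reduction in (a), with the threshold $r/(\nu-\nu')$ chosen so that $\nu R>\nu'R+r$, is exactly the right mechanism, and (b), (c) are indeed definitional. Note, however, that the paper does not supply its own proof of this lemma---it is quoted from Cohen \cite[Lemma~A.5]{MR4927737} without argument---so there is nothing further to compare against here.
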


\begin{lemma}[\cite{MR4927737} Lemma A.6] \label{lem:measure-boxporous}
Let $\mathbf X\subset [-1,1]^d$ be box porous at scale $L$ with depth $n$ for all $0\le n<N$. Then the Lebesgue measure
 \[|\mathbf X|\le 2^d (1-L^{-d})^N\,.\]
 \end{lemma}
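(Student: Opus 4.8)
The plan is to bound $\mathbf X$ from outside by a decreasing sequence of unions of cubes, one at each scale $L^{-n}$, and to show that refining from scale $L^{-n}$ to scale $L^{-(n+1)}$ costs a multiplicative factor $(1-L^{-d})$.

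Concretely, for $0\le n\le N$ let $U_n$ be the union of all cubes $Q\in\mathcal C_n$ with $Q\cap\mathbf X\neq\emptyset$. Since $\mathcal C_n$ tiles $[-1,1]^d$ and $\mathbf X\subset[-1,1]^d$, we have $\mathbf X\subset U_n$ for every $n$; in particular $|U_0|\le|[-1,1]^d|=2^d$. Because $\mathcal C_{n+1}$ refines $\mathcal C_n$ (each cube of $\mathcal C_n$ being the union of exactly $L^d$ congruent cubes of $\mathcal C_{n+1}$), any cube of $\mathcal C_{n+1}$ meeting $\mathbf X$ lies inside a cube of $\mathcal C_n$ meeting $\mathbf X$; hence $U_{n+1}\subset U_n$, and $U_{n+1}$ decomposes, up to a null set along cube boundaries, as the disjoint union over $Q\in\mathcal C_n$ with $Q\cap\mathbf X\neq\emptyset$ of the pieces $U_{n+1}\cap Q$.

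The key step invokes box porosity at depth $n$, which holds for $0\le n<N$: for each $Q\in\mathcal C_n$ with $Q\cap\mathbf X\neq\emptyset$ there is a subcube $Q'\in\mathcal C_{n+1}$ with $Q'\subset Q$ and $Q'\cap\mathbf X=\emptyset$. Then $Q'$ is not one of the cubes of $\mathcal C_{n+1}$ meeting $\mathbf X$, so $U_{n+1}\cap Q\subset Q\setminus Q'$, and since $|Q'|=L^{-d}|Q|$ we get $|U_{n+1}\cap Q|\le(1-L^{-d})|Q|$. Summing over all such $Q$ yields $|U_{n+1}|\le(1-L^{-d})|U_n|$ for $0\le n<N$.

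Iterating this inequality gives $|\mathbf X|\le|U_N|\le(1-L^{-d})^N|U_0|\le 2^d(1-L^{-d})^N$, which is the assertion. I do not expect a genuine obstacle here: the argument is a straightforward telescoping estimate, and the only points requiring a little care are the combinatorial bookkeeping that $\mathcal C_{n+1}$ is the $L^d$-fold refinement of $\mathcal C_n$ (so that a single deleted subcube removes a full $L^{-d}$ fraction of its parent's volume) and the harmless measure-zero overlaps of cubes along their boundaries.
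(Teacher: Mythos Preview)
Your argument is correct and is exactly the standard telescoping proof one expects for this lemma. Note that the paper does not give its own proof of this statement---it is quoted verbatim from \cite{MR4927737}, Lemma~A.6---so there is nothing to compare against beyond observing that your approach is the natural one and would certainly match the original.
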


\begin{lemma}[\cite{MR4927737} Lemma A.7] \label{lem:A7}
    Let \(\mathbf{X} \subset \mathbb{R}^{d}\) be \(\nu\)-porous on balls from scales \(\alpha_0\) to \(\alpha_1\). Then 
    for any ball \(B\) of radius \(\alpha_0 < R < \alpha_1\),
    \[
    |\mathbf{X} \cap B| \leq C_d R^{d} \left( \frac{\alpha_0}{R} \right)^{\gamma}.
    \]
    where $\gamma$ can be taken as
    \[
        \gamma =\frac{\nu^{d}}{C_d(1+|\log \nu|)}
    \]
\end{lemma}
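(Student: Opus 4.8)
The plan is to reduce the estimate to the dyadic bounds already recorded in Lemmas~\ref{lem:compare_porosity} and \ref{lem:measure-boxporous}, and then to optimize the resulting exponential decay in the number of available scales. First I would normalize: writing $B=B(c,R)$ with $\alpha_0<R<\alpha_1$ and setting $h:=\alpha_0/R\in(0,1)$, I consider the rescaled set $\mathbf X_B:=R^{-1}\big((\mathbf X\cap B)-c\big)\subset B(0,1)\subset[-1,1]^d$. If $B'$ is any ball of radius $\rho\in(h,1)$, then $RB'+c$ has radius $R\rho\in(\alpha_0,R)\subset(\alpha_0,\alpha_1)$, so the $\nu$-porosity of $\mathbf X$ produces a point $y\in RB'+c$ with $B_{\nu R\rho}(y)\cap\mathbf X=\emptyset$; since $\mathbf X\cap B\subset\mathbf X$, rescaling back by $R^{-1}$ shows that $\mathbf X_B$ is $\nu$-porous on balls from scale $h$ to $1$, with no loss in $\nu$. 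By Lemma~\ref{lem:compare_porosity}, $\mathbf X_B$ is then box porous at scale $L=\lceil\nu^{-1}\sqrt d\,\rceil$ with depth $n$ for every $n\ge 0$ satisfying $L^{-n}\ge h$; in particular it is box porous at all depths $0\le n<N$ with $N:=\lceil\log(R/\alpha_0)/\log L\rceil$, and crucially $N\ge\log(R/\alpha_0)/\log L$.

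Next I would feed this into Lemma~\ref{lem:measure-boxporous}, which yields $|\mathbf X_B|\le 2^d(1-L^{-d})^N$. Using $\log(1-t)\le -t$ with $t=L^{-d}\in(0,1)$ and then $N\ge\log(R/\alpha_0)/\log L$, I get
\[
(1-L^{-d})^N\le\exp\!\big(-NL^{-d}\big)\le\exp\!\Big(-\tfrac{L^{-d}}{\log L}\,\log\tfrac{R}{\alpha_0}\Big)=\Big(\tfrac{\alpha_0}{R}\Big)^{L^{-d}/\log L}.
\]
Since $\nu^{-1}\le L\le 2\nu^{-1}\sqrt d$, one has $L^{-d}\ge(2\sqrt d)^{-d}\nu^{d}$ and $\log L\le C_d(1+|\log\nu|)$, hence $L^{-d}/\log L\ge\nu^{d}/\big(C_d(1+|\log\nu|)\big)=:\gamma$, which is $<1$ for $C_d$ large; because $0<\alpha_0/R<1$, the right-hand side only increases when the exponent is decreased, so $(\alpha_0/R)^{L^{-d}/\log L}\le(\alpha_0/R)^{\gamma}$. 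Undoing the normalization via $|\mathbf X\cap B|=R^d|\mathbf X_B|$ then gives $|\mathbf X\cap B|\le 2^dR^d(\alpha_0/R)^{\gamma}$, which is the asserted bound with $C_d$ absorbing the factor $2^d$.

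The main difficulty will be purely quantitative bookkeeping rather than anything conceptual: one must check that $L\ge 3$ (so box porosity is meaningful and $\log(1-L^{-d})\le -L^{-d}$ legitimately applies), that the admissible depths furnished by Lemma~\ref{lem:compare_porosity} really include all $0\le n<N$ (handling the ceiling carefully, including the borderline case where $\log(R/\alpha_0)/\log L$ is an integer), and above all that the exponent $L^{-d}/\log L$ is bounded below by $\nu^{d}/(C_d(1+|\log\nu|))$ uniformly in $d$ and $\nu$—in particular that the ceiling in $L=\lceil\nu^{-1}\sqrt d\,\rceil$ does not destroy the lower bound $L^{-d}\gtrsim_d\nu^{d}$ and that $\log L\ge\log 3>0$ keeps the ratio well defined. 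Everything else in the argument is formal.
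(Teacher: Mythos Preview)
Your proposal is correct and follows essentially the same approach as the paper: rescale to $[-1,1]^d$, invoke Lemma~\ref{lem:compare_porosity} to pass to box porosity at scale $L=\lceil\nu^{-1}\sqrt d\,\rceil$, apply Lemma~\ref{lem:measure-boxporous}, and then bound the resulting exponent $\log(1/(1-L^{-d}))/\log L$ from below by $\nu^d/(C_d(1+|\log\nu|))$. The only cosmetic differences are that the paper rescales via a $2R$-cube containing $B$ rather than the ball itself, and defines $\gamma$ exactly by $L^{-\gamma}=1-L^{-d}$ before bounding it below, whereas you use $\log(1-t)\le -t$ to get the exponent $L^{-d}/\log L$ directly; these are equivalent up to the absorbed constant $C_d$.
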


\begin{proof}
    We use Lemma~\ref{lem:compare_porosity} to reduce to proving the statement for box porous sets. Let \(Q\) be a \(2R\)-cube containing \(B\). Let \(\mathbf{X}' = R^{-1} \cdot (Q \cap \mathbf{X}) \subset [-1, 1]^{d}\) be a translated and rescaled copy of \(Q \cap \mathbf{X}\). Then \(\mathbf{X}'\) is \(\nu\)-porous on balls from scales \(\frac{\alpha_0}{R}\) to \(1\), so it is also box porous at scale \(L = \lceil \nu^{-1} \sqrt{d} \rceil\) with depth \(n\) for all \(n \geq 0\) with \(L^{-n} \geq \frac{\alpha_0}{R}\). Let \(N > 0\) be the smallest integer such that \(L^{-N} < \frac{\alpha_0}{R}\). By Lemma~\ref{lem:measure-boxporous}    
    \[
    |\mathbf{X} \cap B| \leq R^{d} |\mathbf{X}'| \leq 2^{d} R^{d} \left( 1 - L^{-d} \right)^{N}.
    \]
    Let \(\gamma = \gamma(\nu, d) > 0\) be such that \(L^{-\gamma} = 1 - L^{-d}\). Then
    \[
    |\mathbf{X} \cap B| \leq 2^{d} R^{d} L^{-N \gamma} \leq 2^{d} R^{d} \left( \frac{\alpha_0}{R} \right)^{\gamma}.
    \]
    Then 
    \[
        \gamma=\frac{
        \log\left(\frac{1}{1-L^{-d}}\right)}{\log L}\geq \frac{L^{-d}}{C_d\log L}\geq \frac{\nu^{d}}{C_d(1+\log \nu^{-1})}
    \]
\end{proof}

\begin{lemma}[\cite{MR4927737} Corollary A.8]\label{lem:measurelineintersection}
    Let \(\mathbf{Y} \subset \mathbb{R}^{d}\) be \(\nu\)-porous on lines from scales \(\alpha_0\) to \(\alpha_1\). Let \(\tau\) be a line segment of length \(\alpha_0 < R < \alpha_1\). Then there is some \(C, \gamma > 0\) depending only on \(\nu\) such that
    \[
    |\tau \cap \mathbf{Y}| \leq C R \left( \frac{\alpha_0}{R} \right)^{\gamma}.
    \]
    where $\gamma$ can be taken as
    \[
        \gamma =\frac{\nu}{C(1+|\log \nu|)}
    \]
\end{lemma}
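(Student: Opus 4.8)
The plan is to reduce this one-dimensional statement about segments to Lemma~\ref{lem:A7} applied in dimension $d=1$. First I would invoke Lemma~\ref{lem:porous_transf}(c): letting $\ell$ be the line containing $\tau$, the restriction $\mathbf{Y}|_\ell = \mathbf{Y}\cap\ell$, viewed as a subset of $\mathbb{R}$, is $\nu$-porous from scales $\alpha_0$ to $\alpha_1$. Since in dimension one porosity on balls and porosity on lines coincide (Definition~\ref{def:porosity}(2)), $\mathbf{Y}|_\ell$ is in particular $\nu$-porous on balls from scales $\alpha_0$ to $\alpha_1$ in $\mathbb{R}$.

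Next I would identify $\tau$ with an interval of length $R$ in $\mathbb{R}$, hence with a ball of radius $R$ (shrinking to radius $R/2$ or enlarging changes nothing beyond an absolute constant coming from the finitely-overlapping estimate). Applying Lemma~\ref{lem:A7} with $d=1$ to the set $\mathbf{Y}|_\ell$ and this ball gives
\[
|\tau\cap\mathbf{Y}| = |\tau\cap\mathbf{Y}|_\ell| \leq C_1\,R\left(\frac{\alpha_0}{R}\right)^{\gamma},\qquad \gamma = \frac{\nu}{C_1(1+|\log\nu|)},
\]
which is exactly the claimed bound, since the exponent in Lemma~\ref{lem:A7} specializes to $\nu^d/(C_d(1+|\log\nu|))$ and $\nu^1=\nu$ here. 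Equivalently, one can simply repeat the proof of Lemma~\ref{lem:A7} in the one-dimensional setting: rescale so that $\tau$ lies in $[-1,1]$, use Lemma~\ref{lem:compare_porosity} to obtain box porosity at scale $L=\lceil \nu^{-1}\rceil$ with depth $n$ for all $L^{-n}\ge \alpha_0/R$, bound $|\mathbf{Y}|_\ell\cap\tau|$ via Lemma~\ref{lem:measure-boxporous} by $2(1-L^{-1})^N$ with $N$ the least integer satisfying $L^{-N}<\alpha_0/R$, and read off $\gamma$ from $L^{-\gamma}=1-L^{-1}$, using $\log\frac{1}{1-L^{-1}}/\log L \ge L^{-1}/(C\log L)\ge \nu/(C(1+|\log\nu|))$.

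There is essentially no serious obstacle here; the statement is the one-dimensional restriction of Lemma~\ref{lem:A7}. The only points deserving a word of care are the harmless passage between a ``line segment of length $R$'' and a ``ball of radius $R$'', which costs only an absolute constant absorbed into $C$, and the degenerate regime where $R$ is comparable to $\alpha_0$, in which case the trivial bound $|\tau\cap\mathbf{Y}|\le R$ already gives the estimate after adjusting $C$.
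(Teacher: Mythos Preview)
Your proposal is correct and follows exactly the paper's own argument: restrict to the line $\ell$ containing $\tau$ via Lemma~\ref{lem:porous_transf}(c) to get a $\nu$-porous subset of $\mathbb{R}$, then apply Lemma~\ref{lem:A7} with $d=1$. The additional remarks you give (segment vs.\ ball, the degenerate regime $R\sim\alpha_0$, and the explicit box-porosity computation) are more detailed than what the paper records, but the method is the same.
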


\begin{proof}
    Let \(\tau\) lie on the line \(\ell\). By Lemma~\ref{lem:porous_transf},  \(\mathbf{X}|_{\ell}\) is \(\nu\)-porous. By Lemma~\ref{lem:A7} in \(d = 1\) we obtain the result.
\end{proof}

\vskip2mm
\subsection*{Acknowledgements.} 
Long Jin and Hong Zhang are supported by National Key R\&D Program of China No. 2022YFA1007400. Long Jin is also supported by National Natural Science Foundation of China  No. 12525106 and New Cornerstone Investigator Program 100001127.
An Zhang is partially supported by National Key R\&D Program of China No. 2024YFA1015300, Beijing Natural Science Foundation No. 1242009, National Natural Science Foundation of China No. 11801536, the China Scholarship Council No. 202506020208 and the Fundamental Research Funds for the Central Universities. The authors would like to thank Kiril Datchev, Semyon Dyatlov and Ruixiang Zhang for inspiring discussions.

\subsection*{Statements.} The authors have no relevant financial or non-financial interests to disclose. Data sharing is not applicable to this article as no datasets were used.

\bibliographystyle{alpha}
\bibliography{ref}
\end{document}